\documentclass[12pt,a4paper]{article}

\pdfoutput=1

\usepackage[utf8]{inputenc}
\usepackage[T1]{fontenc}
\usepackage{listings}
\usepackage{mathtools}
\usepackage{algorithm}
\usepackage{algpseudocode}
\usepackage{amsthm}
\usepackage{amssymb}
\usepackage{amsfonts}
\usepackage[title]{appendix}%
\usepackage{authblk} % autheurs et affiliations
\usepackage{array}
\usepackage[english]{babel}
\usepackage{bbm}  % pour faire l'indicatrice avec \mathbbm{1}
\usepackage{empheq}
\usepackage[shortlabels]{enumitem} % \begin{enumerate}[leftmargin=10pt, (a)]
\usepackage{graphicx}
\usepackage[hypertexnames=false]{hyperref}	%pour avoir des hyperliens dans la table des matieres
\usepackage{cleveref} 
\usepackage[top=3cm, bottom=3cm, left=2.5cm, right=2.5cm]{geometry}
\usepackage{fancyhdr}%utile pour changer header & footer
\usepackage{lastpage}%pour avoir le num de la derniere page (utile pour 
%numerotation)
\usepackage{lmodern}
\usepackage{tikz,tkz-tab}
\usetikzlibrary{babel}
\usepackage{subfiles}
\usepackage{stmaryrd}
\usepackage{subcaption} 
\usepackage{tabularx}
\usepackage{pgf}
\usepackage{pgfplots} 
\usepackage{wrapfig}

\DeclareUnicodeCharacter{2212}{-}
\usepgfplotslibrary{groupplots,dateplot,fillbetween}
\usetikzlibrary{patterns,shapes.arrows,intersections,backgrounds,external}
\newcommand{\customTikzInputFolder}{tikz/input}
\newcommand{\customTikzOutputFolder}{tikz/output}
\tikzexternalize[prefix=\customTikzOutputFolder]%,figure name=_plot_subsect_\thesubsubsection_nb]
\pgfplotsset{compat=newest}

%%%%% Essai
% Define the command. Note that input folder is NOT STATIC ANYMORE!
\newcommand{% 
	% First argument is the width as a proportion of \textwidth
	% Second argument is the filename
    \tikzsetnextfilename{}%
    \ifcustomcompileusingtikzexternalize
	\resizebox{\textwidth}{!}{\input{\customTikzInputFolder.tex}}%
	\else
	\includegraphics[width=\textwidth]{\customTikzOutputFolder.pdf}
	\fi
}[2]{% 
	% First argument is the width as a proportion of \textwidth
	% Second argument is the filename
    \tikzsetnextfilename{#2}%
    \ifcustomcompileusingtikzexternalize
	\resizebox{#1\textwidth}{!}{\input{\customTikzInputFolder#2.tex}}%
	\else
	\includegraphics[width=#1\textwidth]{\customTikzOutputFolder#2.pdf}
	\fi
}
%%%%%

\usepackage{float} % Mettre "H" dans param d'une figure pour empecher le texte
%de se mettre avant la figure

%\usepackage{nameref} % Pour labelliser des sections/chapitres non numérotés.
%Fonctionne en appelant \nameref{} au lieu de \ref

%\usepackage{parskip} % Ne pas mettre des indentations partout

% %%%%%%%%%%%%%%%%%%%%%%%%%%%%%%%%%%%%%%%%%%%%%%%%%%%%%%%%%%%%%%%%%%%%%%%%%%%%%%
% % use ``\ghostref{myeq}'' to reference equation labelled ``myeq'' 
% % but do nothing with it (i.e. don't show number)
% \newcommand\ghostref[1]{%
% % Do nothing!
% }
% \makeatletter
% \autonum@generatePatchedReference{ghostref}
% \makeatother

\bibliographystyle{alpha}

\definecolor{dkgreen}{rgb}{0,0.5,0}
\definecolor{gray}{rgb}{0.5,0.5,0.5}
\definecolor{mauve}{rgb}{0.58,0,0.82}

% Police alternative
%\usepackage{concmath}
% \usepackage{mlmodern}
%\usepackage{fouriernc}

% Modifie ce qui est affiché comme nom de section
% Ne marche pas correctement avec \section*
%\usepackage[explicit]{titlesec}
%\titleformat{\section}
%{\Large\bfseries}{}
%{0pt}{#1\quad\thesection}

\setenumerate[1]{label=\thesection.\arabic*.} % Pour avoir les numeros des 
%questions qui incluent le numero de section.

% Pour avoir les chiffres romains écrits en lettres majuscules
%\newcommand{\RomanNumeralCaps}[1]
%    {\MakeUppercase{\romannumeral #1}}

\hypersetup{
colorlinks=true, % false: boxed links; true: colored links
linkcolor=red	%pas de coloration particulière pour les liens internes
}

\fancypagestyle{plain}{
%	\fancyhf{}
	
	\fancyhead[]{}	%header à gauche
	\fancyfoot[L]{}	%footer au centre
	\fancyfoot[R]{}
	\fancyfoot[C]{\thepage{} / \pageref{LastPage}} %footer a droite
}
\pagestyle{plain}

%%%%%%%%%%%%%%
% change l'affichage de cleveref
% Second argument is singular, third is plural
\crefname{thm}{theorem}{theorems}
\Crefname{thm}{Theorem}{Theorems}

\crefname{demo}{proof}{proofs}
\Crefname{demo}{Proof}{Proofs}

\crefname{prop}{proposition}{propositions}
\Crefname{prop}{Proposition}{Propositions}

\crefname{lemma}{lemma}{lemmata}
\Crefname{lemma}{Lemma}{Lemmaata}

\crefname{definition}{definition}{definitions}
\Crefname{definition}{Definition}{Definitions}

\crefname{rmk}{remark}{remarks}
\Crefname{rmk}{Remark}{Remarks}
%%%%%%%%%%%%%%

 % change le mot utilise entre les 
%references lorsqu'on utilise \cref{qqch1,qqch2}.

%%%%%%%%%%%%%%

\theoremstyle{plain}

\theoremstyle{plain}

\theoremstyle{plain}

\theoremstyle{plain}
\newtheorem{lemma}{Lemma}[section]

\theoremstyle{definition}

\theoremstyle{remark}
\newtheorem{remark}{Remark}[section]
%\newtheorem{rmk}{Remarque}
%%%%%%%%%%%%%%

 %utiliser 
%'\numberthis' pour numéroter une équation dans un environnement align* par ex

 % change le nom "proof" en 
%l'argument, lorsqu'on utilise l'environnement 'proof'

%%%%%%%%%%%%%%
%% Julia language is not supported :(
% \lstset{
% language=Julia,
% basicstyle=\footnotesize\ttfamily,
% numberstyle=\normalsize,
% numbersep=7pt,
% frame = L,
% breaklines = true,
% extendedchars=true,
% keywordstyle=\color{blue},      % keyword style
%    commentstyle=\color{dkgreen},   % comment style
%    stringstyle=\color{black},      % string literal style
% literate={á}{{\'a}}1 {ã}{{\~a}}1 {é}{{\'e}}1 {è}{{\`e}}1 
% {à}{{\`a}}1{ç}{{\c{c}}}1 {ê}{{\^e}}1,%cette ligne permet de bien afficher tous 
% %les accents dans les codes
% }
%%%%%%%%%%%%%%

\makeatletter 
\@ifclassloaded{report}{%
\renewcommand{\thesection}{\thechapter.\arabic{section}} %pour faire commencer les 
%sections à 1 au lieu de 0, et pour avoir le numéro du chapitre dans le numéro de section. 
% /!\ Utile seulement dans un report ou book, et pas dans un article car alors ``\thechapter'' n'existe pas !!
}{}
\makeatother

\numberwithin{equation}{section} % Pour faire correspondre la numérotation des 
%equations aux sections

%%%%%%%%%%%%%%%%%%%%%%%%%%%%%%%%%%%%%%%%%%%%%%%%%%%%

 % Lettres en mathbb
 % Lettres en mathcal

 % ecrire 
%rapidement une convergence. Premier argument, au-dessus de la fleche de 
%limite, deuxieme argument en dessous

\newcommand{\textd}{\textnormal{d}}

\DeclareMathOperator{\arctanTwo}{arctan2}

\allowdisplaybreaks % Pour autoriser des page breaks dans des align

% =========================================================================
% ===== Choose between including TikZ externalized figures and
% ===== including the generated pdf figure.
\newif\ifcustomcompileusingtikzexternalize
% --
% \customcompileusingtikzexternalizetrue % uncomment to compile tikz figures with TikZ externalize, 
\customcompileusingtikzexternalizefalse % uncomment to include only the pdf files (generated by TikZ)
% -------
\ifcustomcompileusingtikzexternalize
    \renewcommand{\customTikzInputFolder}{./tikz-messy/input/}
    \renewcommand{\customTikzOutputFolder}{./tikz-messy/output/} 
\else
    \renewcommand{\customTikzInputFolder}{./tikz/}
    \renewcommand{\customTikzOutputFolder}{./tikz/} % This folder only contains pdf, a ``clean'' directory
\fi

\tikzsetexternalprefix{\customTikzOutputFolder}
% =========================================================================

\author[1]{Erwan Faou\footnote{\url{erwan.faou@inria.fr}}}
\author[1]{Yoann Le H{\'e}naff\footnote{\url{yoann.lehenaff@univ-rennes.fr}}}
\author[2]{Pierre Rapha{\"e}l\footnote{\url{pr463@dpmms.cam.ac.uk}}}

\affil[1]{\small Rennes University, IRMAR UMR 6625\\INRIA centre at Rennes University (MINGuS project-team), Rennes, France}
\affil[2]{\small Department of Pure Mathematics and Mathematical Statistics, University of Cambridge, Cambridge, U.K.}

\date{}

\begin{document}

\title{Modulation algorithm for the nonlinear Schrödinger equation}

\maketitle

\begin{abstract}
    Based on recent ideas, stemming from the use of \emph{bubbles}, we discuss an algorithm for the numerical simulation of the cubic nonlinear Schr\"odinger equation with harmonic potential in any dimension, which could be easily extended to other polynomial nonlinearities.
    For the linear part of the equation, the algorithm consists in discretizing the initial function as a sum of modulated complex functions, each one having its own set of parameters, and then updating the parameters exactly so that the modulated function remains a solution to the equation.
    When cubic interactions are introduced, the Dirac-Frenkel-MacLachlan principle is used to approximate the time evolution of parameters.
    We then obtain a grid-free algorithm in any dimension, and it is compared to a spectral method on numerical examples.
\end{abstract}

\tableofcontents

%%%%%%%%%%%%%%%%%%%%%%%%%%%%%%%%%%%%%%%%%%%%
%%%%%%%%%%%%%%%%%%%%%%%%%%%%%%%%%%%%%%%%%%%%
%%%%%%%%%%%%%%%%%%%%%%%%%%%%%%%%%%%%%%%%%%%%
%%%%%%%%%%%%%%%%%%%%%%%%%%%%%%%%%%%%%%%%%%%%
%%%%%%%%%%%%%%%%%%%%%%%%%%%%%%%%%%%%%%%%%%%%

\section{Introduction}

We are interested in approximating numerically the solution $\psi(t,x)$ to the cubic nonlinear Schr{\"o}dinger equation with harmonic potential,
\begin{equation}
    i \partial_{t} \psi + \Delta_x  \psi - | x |^2 \psi = \psi |\psi|^2, \quad x\in \mathbb{R}^d
    \tag{cNLS}
    \label{eqn: cNLS with psi}
    ,
\end{equation}
where \( d \geq 1 \), \( | \cdot | \) denotes the usual Euclidian norm over \( \mathbb{R}^d \), and \( \Delta_x \) denotes the Laplace operator over \( \mathbb{R}^d \): \( \Delta_x = \sum_{i=1}^d \partial^2_{x_i} \).
This equation is also sometimes called \emph{time-dependent Gross-Pitaevskii equation} \cite{cerimeleNumericalSolutionGrossPitaevskii2000,baoNumericalSolutionGross2003,wangNumericalSimulationGrossPitaevskii2017,tsedneeNumericalSolutionTimedependent2022}.
We focus on a cubic nonlinearity for the sake of clarity, but we emphasize the fact that everything we present is also applicable to other types of polynomial nonlinearities, \emph{mutatis mudandis}.
Similarly, the extension to the equation \eqref{NLSoptic} -- which is \eqref{eqn: cNLS with psi} without the harmonic potential -- is also straightforward.

Very recent works \cite{martelStronglyInteractingBlow2018,faouWeaklyTurbulentSolutions2020} suggest to discretize the solution \( \psi \) of \eqref{eqn: cNLS with psi} as a sum of \( N \) modulated functions, which write as:
\begin{equation}
    \label{eqn: generic discretization of psi -- ansatz}
    \psi(t,x) \approx u(t,x) := \sum_{j=1}^N u_j(t,x)
    ,
\end{equation}
where
\begin{equation}
    \label{eqn: generic discretization of psi -- expression for uj}
    u_j(t,x) := \frac{A_j}{L_j} e^{i\gamma_j + i L_j \beta_j \cdot y_j - i \frac{B_j}{4} |y_j|^2} v_j(s_j, y_j), \quad \text{with } \quad
    \left|
    \begin{aligned}
         & \frac{\textnormal{d} s_j}{\textnormal{d} t} := \frac{1}{L_j^2}, \\
         & y_j := \frac{x - X_j}{L_j},
    \end{aligned}
    \right.
\end{equation}
and \( N \in \mathbb{N}^* \).
In the cited works, the modulated functions \( u_j \) are called \emph{bubbles}. Throughout this work, we may refer to the variables \( (s_j, y_j) \) as the \emph{modulation frame} of the bubble labelled \( j \).

The time dependence of the parameters \( A_j, L_j, B_j, X_j, \beta_j, \gamma_j \) has not been written in \eqref{eqn: generic discretization of psi -- expression for uj} for the sake of clarity, but it is one of the main ingredients of the approach.
More precisely, the core idea is to plug the ansatz \eqref{eqn: generic discretization of psi -- ansatz} into \eqref{eqn: cNLS with psi} in order to obtain ODEs for the parameters.

The idea of relying on time-dependent parameters to represent the solution, or an approximation, is not new and has been widely studied in the linear case, {\em i.e.} when the cubic nonlinearity is replaced by some multiplication with a potential.
When the $v_j$ are chosen as Gaussian functions, it has been called \emph{Variational Gaussian wave packets} and extensively analyzed by Lasser and Lubich \cite{lasserComputingQuantumDynamics2020}, where they applied the Dirac-Frenkel-MacLachlan principe (DFMP) to the linear Schr\"odinger equation with potential.
%The Dirac-Frenkel-MacLachlan is the name given to the Dirac-Frenkel principle, or the MacLachlan principle, because in this case they are equivalent.

More generally, this type of method using Gaussian functions is widely used in the field of Chemical Physics \cite{hellerTimeDependentVariational1976,huberGeneralizedGaussianWave1987,coalsonMultidimensionalVariationalGaussian1990,worthNovelAlgorithmNonadiabatic2004,adamowiczLaserinducedDynamicAlignment2022}.
The different methods used are variations of the same idea, and possess many names: superposition of Gaussian Wavepackets, Gaussian beams, Thawed Gaussians, Frozen Gaussian\dots All of these algorithms simply consist in applying a Dirac-Frenkel-MacLachlan principle to linear Schr\"odinger equations, the difference lying in how the parameters are updated.
For example, the Thawed Gaussian method allows the width matrix to be time-dependent while the Frozen Gaussian does not.
%However, they all produce an approximate solution, even in the linear case.
%In general, the chemists' setting of Gaussian beams involve correlated components, hence the width matrix. 
%In the present paper we make the simplifying assumption that our Gaussians are not correlated, and that their width is parametrized by a real positive scalar.

\vspace{1em}

Let us now explain the main ideas underlying the full modulation \eqref{eqn: generic discretization of psi -- expression for uj} -- developed in various works, see for instance \cite{martelStronglyInteractingBlow2018,faouWeaklyTurbulentSolutions2020} and the references therein -- and why it is particularly adapted to the nonlinear case.

Consider for instance the case of one bubble, {\em i.e.} $N = 1$.
When plugging the ansatz \eqref{eqn: generic discretization of psi -- expression for uj} into \eqref{eqn: cNLS with psi}, we obtain an equation of the form
\begin{equation*}
    i \partial_{s} v + \Delta_y v - | y |^2 v - |v|^2 v + P(s; y,\partial_y) v = 0,
\end{equation*}
where $P(s; y,\partial_y)$ is a quadratic operator in $y$ and $\partial_y$, which depends on time \( s \) through the parameters $(A,L,B,X,\beta,\gamma)$ and their time derivatives with respect to $s$. See \eqref{eqn: HO -- idt u - Hu -- with v} for more precise detail.
It is then possible to choose the parameters in such a way that for instance $P(s; y,\partial_y) v = -\lambda v$ for some $\lambda \in \mathbb{R}$, and to take $v$ as a soliton solution of the stationary equation
\begin{equation}
    \label{soliton}
    - \Delta_y  v + | y |^2 v + |v|^2 v = \lambda v.
\end{equation}
This yields a differential system to be solved by the parameters $(A,L,B,X,\beta,\gamma)$ which is given below by \eqref{eqn: modulation ODEs -- linear part wrt time s - with v}.
It turns out that these equations form a {\em completely integrable Poisson system} that can be solved, and the solution for a single bubble can be thus taken as a {\em modulated soliton}.

In other words, taking $v_j = v$ when $N = 1$, a solution of the nonlinear equation \eqref{soliton} yields an exact solution $u_j = u$ under the form \eqref{eqn: generic discretization of psi -- expression for uj} of \eqref{eqn: cNLS with psi}.

This kind of approach has been used successfully in various situations from a theoretical point of view, see \cite{merleBlowupDynamicUpper2005,martelStronglyInteractingBlow2018,faouWeaklyTurbulentSolutions2020,merleIMPLOSIONTHREEDIMENSIONAL2020} and the references therein.
Typically, when \( N \geq 2 \), several modulated solitons interact and this can produce finite time blow-up of growth of Sobolev norm phenomena. A large part of the analysis relies on the ability of calculating nonlinear interactions between two modulated solitons.
This can be done for instance in an integrable situation, e.g. the Szeg\"o equation \cite{gerardTwoSolitonTransientTurbulent2018}.

Another byproduct of these modulation techniques in 2D is to make a link between \eqref{eqn: cNLS with psi} on a finite time interval and the Schr\"odinger equation without harmonic potential
\begin{equation}
    \label{NLSoptic}
    i \partial_{s} \psi + \Delta_x  \psi  = \psi |\psi|^2, \quad x\in \mathbb{R}^d
\end{equation}
on an unbounded time interval.
In this case, the modulation equations generate the so-called {\em lens} transform, see for instance \cite{carlesSemiclassicalAnalysisNonlinear2021}.
Note that our algorithms could be also be applied to the latter equation but we will restrict our analysis to the Harmonic case.
Let us note as well that such modulation techniques can also be related with the families of exact splitting introduced in \cite{bernierExactSplittingMethods2020}, where the time coefficients can be seen as specific time changes $s$ in the modulation equations.

Inspired by these successful theoretical works, we retain the idea of approximating solutions to \eqref{eqn: cNLS with psi} by modulating the parameters \( A_j \), \( L_j \), \( B_j \), \( X_j \), \( \beta_j \), \( \gamma_j \) in such a way that $v_j(s_j,x)$ satisfies a {\em smoother in time} equation -- typically a stationary soliton equation.
However, from the numerical point of view, choosing the $v_j$ as stationary solitons would require first to solve explicitly the nonlinear equation \eqref{soliton} and more problematically, to estimate numerically the nonlinear interactions between the modulated solitons by using the Dirac-Frenkel-MacLachlan principle.
The latter consists essentially in a projection onto the manifold of modulated solitons, which is in practice very difficult to evaluate numerically.
Moreover, one is naturally interested in using a splitting strategy between the linear and nonlinear parts, which would typically destroy the soliton structure in the equation.
Following this idea, we split the Schr{\"o}dinger equation \eqref{eqn: cNLS with psi} into the linear part
\begin{equation}
    \label{eqn: cNLS with psi -- linear part}
    i \partial_{t} \psi + \Delta_x \psi - |x|^2 \psi = 0
    \tag{HO}
    ,
\end{equation}
and the nonlinear part
\begin{equation}
    \label{eqn: cNLS with psi -- nonlinear part}
    i \partial_{t} \psi = \psi |\psi|^2.
    \tag{cNLS-nonLin.}
\end{equation}
The linear equation \eqref{eqn: cNLS with psi -- linear part} is also called the Harmonic Oscillator.
Traditional well-known numerical schemes are based on this abstract decomposition and it is easy to determine high-order splitting methods obtained by solving alternately the linear and nonlinear parts, like Lie, Strang Splitting or triple jump composition, see for instance \cite{mclachlanSplittingMethods2002,hairerGeometricNumericalIntegration2006,casasHighorderHamiltonianSplitting2017}.
However, the approximation of the solution to each of these two parts remains to be done using time and space discretizations.
They are traditionally solved using grid-based numerical schemes (see for instance \cite{baoNumericalSolutionGross2003,qianFastGaussianWavepacket2010,faouGeometricNumericalIntegration2012,wangNumericalSimulationGrossPitaevskii2017,bernierExactSplittingMethods2020}). The computational complexity of grid-based methods is always an issue due to the bad scaling with respect to the dimension.
Fortunately, using the modulation techniques given above, the solution to the linear part \eqref{eqn: cNLS with psi -- linear part} can be simulated exactly, in a straightforward manner, and very efficiently by considering Hermite decomposition of the functions $v_j$. 
The computational cost for the simulation of the linear part only is \( \mathcal{O}(N\cdot d) \) -- recall \( N \) is the number of bubbles and \( d \) the dimension -- to be compared with grid-based complexities of order \( \mathcal{O}(M^d) \) where \( M \) would be the number of discretization points in each dimension.

To approximate the solution to the nonlinear part \eqref{eqn: cNLS with psi -- nonlinear part}, we use the Dirac-Frenkel-MacLachlan principle. In theory, when the $v_j$ are finite sums of Hermite polynomials, the calculation of the interactions boils down to the computation of integrals of products of Hermite functions in different modulation frames, which {\em a priori} can be done in a systematic way.
In practice, these computations can get heavy and to simplify them we will give the explicit result of the Gaussian case in this paper.

In the end, we thus obtain an algorithm for modulated Gauss-Hermite functions, which can be easily implemented numerically, is grid-free, and is also able to capture high oscillations of the solution.

\vspace{1em}
The paper is organized as follows.
Section \ref{sect: The Harmonic Oscillator} is devoted to \eqref{eqn: cNLS with psi -- linear part} for a general bubble decomposition.
We recall some conservation laws obtained when considering \emph{bubbles} in \eqref{eqn: cNLS with psi}, and exhibit \emph{universal modulation equations} which have a completely integrable Hamiltonian structure (see for instance \cite{leimkuhlerSimulatingHamiltonianDynamics2005,hairerGeometricNumericalIntegration2006} for more details about completely integrable Hamiltonian structures).
Analytical formula for all of the modulation parameters can then be made explicit by decomposing the \( v_j \) into the Hermite basis.

We focus in Section \ref{sect: DFMP} on \eqref{eqn: cNLS with psi -- nonlinear part}, which takes into account cubic interactions.
The nonlinearity that is introduced is the core of difficulties arising in the Schr{\"o}dinger equation, and it is hopeless to look for exact solutions in the general case.
The proposed approach uses the Dirac-Frenkel-MacLachlan principle to obtain modulation equations in the case of Gaussian functions \( v_j \).
The choice of Gaussian functions allows to perform most computations exactly, and to avoid numerical quadratures or delicate calculations of integrals of multiple products of Hermite functions.
This allows us to take into account as many bubbles as one desires at the cost of a computational complexity of order \( \mathcal{O}(N^4 d + d^3 N^3) \).
Here, \( N \) is the total number of bubbles and \( d \) the dimension.
The fourth power of \( N \) is due to polynomial interactions of order three.
This algorithm almost does not suffer from the well-known ``curse of dimensionality'' since it is at most polynomial with respect to the dimension \( d \).

Finally, Section \ref{sect: numerical experiments} is dedicated to illustrating numerically the fine details that are obtainable with the Dirac-Frenkel-MacLachlan principle, as well as the long-time behavior, compared to a FFT-based spectral scheme.
Our experiments show that, if the initial data is discretized ``nicely'', the given algorithm yields satisfying results.

%%%%%%%%%%%%%%%%%%%%%%%%%%%%%%%%%%%%%%%%%%%%
%%%%%%%%%%%%%%%%%%%%%%%%%%%%%%%%%%%%%%%%%%%%
%%%%%%%%%%%%%%%%%%%%%%%%%%%%%%%%%%%%%%%%%%%%
%%%%%%%%%%%%%%%%%%%%%%%%%%%%%%%%%%%%%%%%%%%%
%%%%%%%%%%%%%%%%%%%%%%%%%%%%%%%%%%%%%%%%%%%%

\section{The Harmonic Oscillator}
\label{sect: The Harmonic Oscillator}

In this section we focus onto the linear part of the cubic Non Linear Schr\"odinger equation, namely the Harmonic Oscillator \eqref{eqn: cNLS with psi -- linear part}.

\subsection{Conservation Laws}
\label{sect: HO -- Conservation Laws}

We recall classical laws for the Harmonic oscillator equation (see for instance \cite{taoNonlinearDispersiveEquations2006,killipNonlinearSchrOdinger2013}).
Let \( \psi \) be the solution to \eqref{eqn: cNLS with psi -- linear part}.

\begin{lemma}[Conserved quantities in dimension \( d=2 \)]
    \label{lemma: conserved quantities in HO}
    We consider a two-parameter family of equations containing \eqref{eqn: cNLS with psi -- linear part} and \eqref{eqn: cNLS with psi}:
    \begin{equation*}
        i \partial_{t} \psi + \mu(\Delta \psi - |x|^2 \psi) = \lambda |\psi|^2 \psi, \quad \mu, \lambda \in \mathbb{R}
        .
    \end{equation*}
    The (radial) conservation laws are mass \( \|\psi\|_{ \mathbb{L}^2 } \), energy
    \begin{equation*}
        E_{\mu,\lambda} = \frac{\mu}{2} \left\langle H\psi, \psi \right\rangle + \frac{\lambda}{4} \left\langle |\psi|^2 \psi, \psi \right\rangle
        ,
    \end{equation*}
    where \( H = -\Delta + |x|^2 \) and \( \langle f,g \rangle := \int_{\mathbb{R}^d} f\bar{g} \),
    and momentum
    \begin{equation*}
        M_{\mu, \lambda} = \left( E_{\mu,\lambda} - \mu \|x\psi\|^2_{ \mathbb{L}^2 } \right)^2 + \mu^2 \left( \Im \int x\cdot \nabla \psi \bar{\psi} \right)^2
        ,
    \end{equation*}
    and the same applied to any power \( (-H)^s \psi \). There also holds the non radial conservation law
    \begin{equation*}
        \mathcal{P}_j = \frac{1}{4} \left( \Im \int \partial_{j} \psi \bar{\psi} \right)^2 + \mu^2 \left( \int x_j |\psi|^2 \right)^2, \quad j=1, 2
        .
    \end{equation*}
\end{lemma}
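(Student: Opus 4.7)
My plan is to prove each claim by direct differentiation in time along the flow, substituting the PDE $i\partial_t\psi = \mu H\psi + \lambda|\psi|^2\psi$ (with $H = -\Delta + |x|^2$) and simplifying via integration by parts, all boundary terms vanishing by decay at infinity.

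I would first dispose of the two simplest assertions. Mass conservation follows from $\tfrac{d}{dt}\|\psi\|_{L^2}^2 = 2\Re\langle \partial_t\psi,\psi\rangle$ after using self-adjointness of $H$ and the fact that $\langle|\psi|^2\psi,\psi\rangle\in\mathbb{R}$. Energy conservation is equally classical: $E_{\mu,\lambda}$ is precisely the Hamiltonian associated with the equation (via the symplectic form $\Im\int\cdot\bar\cdot$), so $\dot E_{\mu,\lambda}=0$ by standard Hamiltonian reasoning, which here amounts to the same self-adjointness and real-valuedness arguments.

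The real content lies in the momentum-type invariant $M_{\mu,\lambda}$. I would set
\[
a := E_{\mu,\lambda} - \mu\|x\psi\|_{L^2}^2, \qquad b := \mu\,\Im\!\int x\cdot\nabla\psi\,\bar\psi,
\]
so that $M_{\mu,\lambda} = a^2 + b^2$, and aim at showing that $(a,b)$ rotates in phase, i.e.\ $\dot a = -4\mu b$ and $\dot b = 4\mu a$. The first identity follows from energy conservation combined with the elementary virial-type computation $\tfrac{d}{dt}\|x\psi\|_{L^2}^2 = 4\mu\,\Im\int x\cdot\nabla\psi\,\bar\psi$, obtained by integrating the term $\Im\int|x|^2\bar\psi\Delta\psi$ by parts once. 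The second identity requires the longer Morawetz-type calculation: differentiating $\Im\int x\cdot\nabla\psi\bar\psi$ and substituting the equation produces three contributions — from the Laplacian, from the harmonic potential, and from the cubic term — which after integration by parts assemble into
\[
\dot b / \mu = 2\mu\bigl(\|\nabla\psi\|_{L^2}^2 - \|x\psi\|_{L^2}^2\bigr) + \tfrac{d\lambda}{2}\|\psi\|_{L^4}^4.
\]
Using $\|\nabla\psi\|_{L^2}^2 + \|x\psi\|_{L^2}^2 = \langle H\psi,\psi\rangle$ to re-express $\|\nabla\psi\|_{L^2}^2-\|x\psi\|_{L^2}^2$ in terms of $E_{\mu,\lambda}$ and $\|x\psi\|_{L^2}^2$, the right-hand side rearranges into $4\mu a + \lambda(d/2-1)\|\psi\|_{L^4}^4$. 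The conservation of $a^2+b^2$ thus follows from the vanishing of the coefficient $\lambda(d/2-1)$, which occurs precisely when $d=2$. This pseudo-conformal cancellation of the cubic nonlinearity in two spatial dimensions is the conceptual heart of the proof and, as I see it, the main obstacle: it is what forces the dimensional restriction stated in the lemma.

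For the non-radial invariants $\mathcal{P}_j$, the same strategy is applied coordinate-by-coordinate. Setting $A_j := \tfrac12\Im\int\partial_j\psi\bar\psi$ and $B_j := \mu\int x_j|\psi|^2$, so that $\mathcal{P}_j = A_j^2 + B_j^2$, I would derive a linear first-order system for $(A_j,B_j)$ by the same substitute-and-integrate-by-parts procedure; this time the nonlinear contribution drops out pointwise as $\tfrac12\int\partial_j|\psi|^4 = 0$, without any dimensional hypothesis. Finally, for the $(-H)^s\psi$ family, the observation is that in the linear regime $\lambda=0$ the fractional power $(-H)^s$ commutes with the propagator $e^{-i\mu H t}$, so $(-H)^s\psi$ still solves \eqref{eqn: cNLS with psi -- linear part} and every conservation law above applies to it verbatim.
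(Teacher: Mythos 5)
Your proposal follows essentially the same route as the paper's proof in Appendix \ref{appendix: conserved}: direct differentiation of each functional along the flow, the virial identity \( \frac{\textnormal{d}}{\textnormal{d}t}\|x\psi\|_{\mathbb{L}^2}^2 = 4\mu\,\Im\int x\cdot\nabla\psi\,\bar\psi \), the dilation/Pohozaev computation for \( \Im\int x\cdot\nabla\psi\,\bar\psi \) with the cubic contribution cancelling exactly in dimension \( d=2 \), and the resulting rotation structure of the pair of functionals (the paper writes this as the second-order ODE \( \frac{\textnormal{d}^2}{\textnormal{d}t^2}\Im\int x\cdot\nabla\psi\,\bar\psi = -16\mu^2\,\Im\int x\cdot\nabla\psi\,\bar\psi \), you as the equivalent first-order system \( \dot a = -4\mu b \), \( \dot b = 4\mu a \)), together with the same coordinate-wise argument for \( \mathcal{P}_j \). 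Only a trivial bookkeeping slip: your rearranged identity should read \( \dot b/\mu = 4a + \lambda\left(\tfrac{d}{2}-1\right)\|\psi\|_{\mathbb{L}^4}^4 \) rather than \( 4\mu a + \cdots \), which does not affect the argument.
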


\begin{proof}
    See Appendix \ref{appendix: conserved}.
\end{proof}

% \subsubsection{Galilean symmetry}
% \label{sect: HO -- Galilean symmetry}

% \begin{lemma}[Galilean symmetry]
%     If \( v \) solves \eqref{eqn: cNLS with psi -- linear part}, then so does 
%     \begin{equation*}
%         u(t,x) = v(t,y) e^{i\beta\cdot y + i\gamma(t)}, \quad y = x-x(t)
%     ,\end{equation*}
%     %
%     for
%     \begin{equation*}
%         \dot{x} = 2\beta,\quad \dot{\beta} = -2x,\quad \dot{\gamma} = |\beta|^2 - |x|^2
%     .\end{equation*}
%     %
% \end{lemma}

% \begin{remark}
%     Please note we consider now that \( u \) is solution to \eqref{eqn: cNLS with psi -- linear part} because we are looking at a solution under the form of \emph{bubbles}, whereas \( \psi \) has no particular form.
% \end{remark}

% \begin{proof}
%     We change variables:
%     \begin{equation*}
%         u(t,x) = v(t,y) e^{i\gamma}, \quad y = x-x(t)
%     ,\end{equation*}
%     %
%     then 
%     \begin{equation*}
%         i\partial_t v - \dot{\gamma}v - i\dot{x} \cdot \nabla v + \Delta v - \left| y - x(t) \right|^2 v + T(v,v,v) = 0
%     .\end{equation*}
%     %
%     Let
%     \begin{equation*}
%         \dot{x} = 2\beta,\quad v = e^{i\beta\cdot y} w
%     ,\end{equation*}
%     %
%     then
%     \begin{equation*}
%         i \partial_{t} w + \Delta w - |y|^2 w - \left[ \dot{\beta}+ 2x(t) \right]\cdot yw + i(2\beta - \dot{x}) \cdot \nabla w + \left( -\dot{\gamma} + x_t \cdot \beta  - \left( |\beta|^2 + |x|^2 \right)\right) w = 0
%     ,\end{equation*}
%     %
%     and hence the symmetry.
% \end{proof}

\subsection{Renormalized flow}
\label{sect: HO -- Renormalized flow}

By linearity of the Harmonic oscillator part, we can reduce the problem to calculating the evolution of the decomposition \eqref{eqn: generic discretization of psi -- expression for uj} for only one bubble $v_j$.
Recall the expression of \( u(t,x) \):
\begin{equation}
    \label{eqn: HO -- Renormalized flow}
    u(t,x) = \frac{A}{L} e^{i\gamma + iL\beta \cdot y - i\frac{B}{4} |y|^2} v(s,y), \quad y = \frac{x-X(t)}{L(t)}, \, \frac{\textd s}{\textd t} = \frac{1}{L(t)^2}
    .
\end{equation}
We compute, in dimension \( d \geq 1 \):
\begin{equation*}
    \Delta_x u = \frac{Ae^{i\gamma}}{L^3} \Delta_y \left[ e^{iL\beta\cdot y - i\frac{B}{4} |y|^2} v(s, y) \right]
    ,
\end{equation*}
and
\begin{equation}
    \partial_{k} \left[ e^{ iL\beta\cdot y - i \frac{B}{4} |y|^2 } v \right]
    = e^{iL\beta\cdot y - i \frac{B}{4} |y|^2} \left[ \partial_{k} v + i\left( L\beta_k - \frac{B}{2} y_k \right) v \right],\quad k=1, \dots, d
    ,
\end{equation}
and
\begin{align*}
     & \partial_{k}^2 \left[ e^{iL\beta\cdot y - i \frac{B}{4} |y|^2} v \right]  = e^{iL\beta\cdot y - i\frac{B}{4} |y|^2}                                                                                                                                   \\
     & \quad \times \left[ \partial_{k}^2 v + i \left( L\beta_k - \frac{B}{2} y_k \right) \partial_{k} v - i\frac{B}{2} v + i\left( L\beta_k - \frac{B}{2} y_k \right) \left[ \partial_{k} v + i \left( L\beta_k - \frac{B}{2} y_k \right) v \right] \right] \\
     & = e^{iL\beta\cdot y - i \frac{B}{4} |y|^2} \left[ \partial_{k}^2 v + i\left( 2L\beta_k - By_k \right) \partial_{k} v + \left( -i\frac{B}{2} - L^2\beta_k^2 + LB\beta_k y_k - \frac{B^2}{4} y_k^2 \right) v \right].
\end{align*}
Hence,
\begin{align*}
    \Delta_x u & = \frac{A}{L^3} e^{i\gamma + iL\beta\cdot y - i\frac{B}{4} |y|^2 } v                                                                                                             \\
               & \quad \times \left[ \Delta_y v + i\left( 2L\beta - By \right) \cdot \nabla v + \left( -i\frac{B}{2}d  - L^2|\beta|^2 + LB\beta \cdot y - \frac{B^2}{4} |y|^2  \right) v \right].
\end{align*}
We have
\begin{align*}
    -|x|^2 u
     & = -\frac{A}{L} e^{i\gamma + iL\beta\cdot y - i \frac{B}{4} |y|^2} \left| Ly + X \right|^2 v                                 \\
     & = \frac{A}{L^3} e^{i\gamma + iL\beta\cdot y - i \frac{B}{4} |y|^2} \left( -L^4 |y|^2 - 2L^3 X \cdot y - L^2 |X|^2 \right) v
    ,
\end{align*}
thus
\begin{equation}
    \label{eqn: HO -- -Hu with general v}
    \begin{aligned}
        (\Delta_x - |x|^2) u & = \frac{A}{L^3} e^{i\gamma + iL\beta\cdot y - i \frac{B}{4} |y|^2} \left\{ \Delta_y v - i B \left( \frac{d}{2}  v + \Lambda v \right) - L^2\left( |\beta|^2 + |X|^2 \right) v \right. \\
                             & \qquad \left. +2iL\beta\cdot \nabla v + \left( LB\beta - 2L^3 X \right) \cdot yv + \left( -\frac{B^2}{4} -L^4 \right) |y|^2 v \right\},
    \end{aligned}
\end{equation}
where we denoted \( \Lambda v := y \cdot \nabla v \).
We now compute
\begin{align*}
    \partial_{t} u & = \partial_{t} \left( e^{i\gamma + i \beta\cdot (x-X) - i\frac{B}{4L^2} |x-X|^2} \frac{A}{L} v(s,y)  \right) \nonumber \\
                   &
    \begin{aligned}
         & = e^{i\gamma + i\beta\cdot (x-X) - i\frac{B}{4L^2} |x-X|^2} \frac{A}{L} \left[ \partial_{t} v + \frac{A_t}{A} v - \frac{L_t}{L} (v + \Lambda v) - \frac{X_t}{L} \cdot \nabla v \right] \\
         & \quad + e^{i\gamma + i\beta\cdot (x-X) - i\frac{B}{4L^2} |x-X|^2} \frac{A}{L} iv                                                                                                       \\
         & \qquad \times \left[ \gamma_t + \beta_t \cdot (x-X) - \beta\cdot X_t - \frac{B_t}{4L^2} |x-X|^2 \right.                                                                                \\
         & \qquad\qquad \left. + \frac{2L_t B}{4L^3} |x-X(t)|^2 + \frac{2B}{4L^2} (x-X) \cdot X_t \right]
    \end{aligned}
    \\
                   &
    \begin{aligned}
         & = e^{i\gamma + i\beta\cdot (x-X) - i\frac{B}{4L^2} |x-X|^2} \frac{A}{L^3} \left[ \partial_{s} v + \frac{A_s}{A} v - \frac{L_s}{L} (v+\Lambda v) - \frac{X_s}{L} \cdot \nabla v \right] \\
         & \quad + e^{i\gamma + i\beta\cdot (x-X) - i\frac{B}{4L^2} |x-X|^2} \frac{A}{L^3} iv                                                                                                     \\
         & \qquad \times \left[\gamma_s + L\beta_s \cdot y - \beta \cdot X_s - \frac{B_s}{4} |y|^2 + \frac{2L_s B}{4L} |y|^2 + \frac{B}{2} y\cdot \frac{X_s}{L}  \right],
    \end{aligned}
\end{align*}
and hence
\begin{equation}
    \label{eqn: i dt uj}
    \begin{aligned}
        i \partial_{t} u
         & = e^{i\gamma + i\beta\cdot (x-X) - i\frac{B}{4L^2} |x-X|^2} \frac{A}{L^3} \left\{ i \partial_{s} v + (-\gamma_s + \beta\cdot X_s) v + \left( \frac{A_s}{A} - \frac{L_s}{L}  \right) iv - \frac{L_s}{L} i\Lambda v \right. \\
         & \qquad\qquad \left. - i\frac{X_s}{L} \cdot \nabla v + \left( -L\beta_s - \frac{BX_s}{2L} \right) \cdot yv + \left( \frac{B_s}{4} - \frac{B}{2} \frac{L_s}{L} \right) |y|^2 v \right\}.
    \end{aligned}
\end{equation}
This yields
\begin{equation}
    \label{eqn: HO -- idt u - Hu -- with v}
    \begin{aligned}
        i \partial_{t} u + \Delta_x u - |x|^2 u
         & = \frac{A}{L^3} e^{i\gamma + iL\beta\cdot y - i\frac{B}{4} |y|^2} \left\{ i \partial_{s} v + \left( -\gamma_s + \beta\cdot X_s - L^2 \left( |\beta|^2 + |X|^2 \right) \right) v \right. \\
         & \quad + \left( \frac{A_s}{A} - \frac{L_s}{L} - B \frac{d}{2}  \right) iv + \left( - \frac{L_s}{L} - B \right)i\Lambda v + i \left( 2L\beta - \frac{X_s}{L}  \right) \cdot \nabla v      \\
         & \quad + \left( -2L^3 X + LB\beta - L\beta_s - \frac{B}{2} \frac{X_s}{L}  \right) \cdot yv                                                                                               \\
         & \quad + \left. \Delta_y v + \left[ \frac{B_s}{4} - \left( \frac{B^2}{4} + L^4 \right) - \frac{B}{2} \frac{L_s}{L} \right] |y|^2 v \right\}(s,y).
    \end{aligned}
\end{equation}

Once we have Equation \eqref{eqn: HO -- idt u - Hu -- with v}, we are free to choose the parameters as we wish.
A natural choice is to conjugate the equation back to the original one in variables $(s,y)$, {\em i.e.} to take
\begin{equation}
    \label{diffsys}
    \left|
    \begin{aligned}
         & \gamma_s - \beta \cdot X_s + L^2 \left( |\beta|^2 + |X|^2 \right) = 0                \\
         & \frac{A_s}{A} - \frac{L_s}{L} - \frac{B}{2} d = 0                                    \\
         & - \frac{L_s}{L} - B = 0                                                              \\
         & 2L\beta - \frac{X_s}{L}  = 0                                                         \\
         & -2L^3X + LB\beta - L\beta_s - \frac{BX_s}{2L} = 0                                    \\
         & \frac{B_s}{4} - \left( \frac{B^2}{4} + L^4 \right) - \frac{B}{2} \frac{L_s}{L} = -1.
    \end{aligned}
    \right.
\end{equation}
% For this universal choice independent of the choice of $v$ except the relation  \eqref{eqn: HO -- Renormalized flow}, we obtain

Certain choices may be more convenient than others, and \eqref{diffsys} is chosen so that \( v \) only has to solve the stationary Harmonic Oscillator in the variables \( (s, y) \):
\begin{equation}
    \label{harmosy}
    (i \partial_t  + \Delta_x  - |x|^2) u(t,x) = 0
    \qquad \iff \qquad
    ( i \partial_{s}  + \Delta_y  - |y|^2 ) v(s,y) = 0.
\end{equation}
Now we see that if $v$ is decomposed in the Hermite basis, we can solve explicitly the previous equation in variable $(s,y)$ and obtain the solution $u(t,x)$ after solving the differential system \eqref{diffsys}.

Any function statisfying equation \eqref{harmosy} can be decomposed in the Hermite basis
\begin{equation*}
    \left\{ \varphi_n := H_{n_1} \cdots H_{n_d}: n\in \mathbb{N}^d \right\}
    ,
\end{equation*}
where the function \( H_k(z) \) denotes the Hermite function of order \( k \in \mathbb{N} \), which satisfies the following diffferential equation:
\begin{equation*}
    H_k''(z) + (2k+1 - z^2) H_k(z) = 0 .
\end{equation*}
A straightforward calculation shows that $$ (- \Delta_y + |y|^2) \varphi_n = (2 |n| + d) \varphi_n $$ where \( |n| := \sum_{k=1}^d n_k \).
Hence from a decomposition
\begin{equation}
    \label{eqn: HO -- decomposition of v into hermite basis}
    v(0,y) = \sum_{n \in\mathbb{N}^d} v_n\varphi_n(y)
\end{equation}
with $v_n \in \mathbb{C}$,
we calculate that
\begin{equation}
    \label{eqvsy}
    v(s,y) = \sum_{n \in\mathbb{N}^d} v_n e^{- (2 |n| + d) i s}\varphi_n(y)
\end{equation}
is solution of \eqref{harmosy}.
It remains to obtain \( u(t,x) \) solution of \eqref{eqn: cNLS with psi -- linear part}.
In order to do this, one simply needs to integrate \eqref{diffsys} (which is independent of \( n \) in the Hermite decomposition), and to plug \eqref{eqvsy} into \eqref{eqn: HO -- Renormalized flow}.
In particular, we need to calculate the time $s(t)$ as a function of the original time $t$.

\subsection{Integrability of the modulation equations}
\label{sect: HO -- Hamiltonian formulation for the free bubble}
We rewrite \eqref{diffsys} as
\begin{equation}
    \label{eqn: modulation ODEs -- linear part wrt time s - with v}
    \left|
    \begin{aligned}
        A_s      & = \frac{AB}{2} (d-2)                    \\
        L_s      & = -BL                                   \\
        B_s      & = -4 + 4L^4 - B^2                       \\
        X_s      & = 2L^2 \beta                            \\
        \beta_s  & = -2L^2 X                               \\
        \gamma_s & = L^2 \left( |\beta|^2 - |X|^2 \right).
    \end{aligned}
    \right.
\end{equation}

In time \( t \), as \( \frac{\textd}{\textd s} = L^2 \frac{\textd}{\textd t}  \), this system is
\begin{equation}
    \label{eqn: modulation ODEs -- linear part wrt time t - with v}
    \left|
    \begin{aligned}
        A_t      & = \frac{AB}{2L^2} (d-2)                                                 \\
        L_t      & = - \frac{B}{L} = -2L \partial_{B} \mathcal{E}                          \\
        B_t      & = -\frac{4}{L^2} + 4L^2 - \frac{B^2}{L^2} = 2L \partial_{L} \mathcal{E} \\
        X_t      & = 2\beta = \nabla_\beta \mathcal{R}                                     \\
        \beta_t  & = -2X = -\nabla_X \mathcal{R}                                           \\
        \gamma_t & =  |\beta|^2 - |X|^2 ,
    \end{aligned}
    \right.
\end{equation}
with
\begin{equation*}
    \mathcal{E}(B, L) = \frac{1}{L^2} \left( 1 + \frac{B^2}{4}  \right) + L^2, \quad \text{ and } \quad
    \mathcal{R}(X, \beta) = |X|^2 + |\beta|^2
    .
\end{equation*}

Let us write explicitly the Darboux-Lie transformation associated with the previous Poisson system, to obtain canonical Hamiltonian coordinates.
We set
\begin{equation*}
    k = \frac{1}{2} \log L, \quad L = e^{2k}
    ,
\end{equation*}
and the system becomes
\begin{equation}
    \label{eqn: modulation ODEs -- linear part wrt time t - with hamiltonian and k}
    \left|
    \begin{aligned}
        k_t      & = - \partial_{B} \mathcal{H} \\
        B_t      & = \partial_{k} \mathcal{H}   \\
        X_t      & = \nabla_\beta \mathcal{H}   \\
        \beta_t  & = - \nabla_X \mathcal{H}     \\
        A_t      & = \frac{AB}{2} (d-2) e^{-4k} \\
        \gamma_t & =  |\beta|^2 - |X|^2,
    \end{aligned}
    \right.
\end{equation}
with
\begin{equation*}
    \mathcal{H}(k, B, X, \beta) = \mathcal{E}(k, B) + \mathcal{R}(X, \beta) = e^{-4k} \left( \frac{B^2}{4} + 1 \right) + e^{4k} + |X|^2 + |\beta|^2.
\end{equation*}

\begin{lemma}
    \label{lemma: HO exact integration of parameters}
    There exists a symplectic change of variable \( (X, B, k, \beta) \mapsto (h, a, \xi, \theta) \in \mathbb{R}\times \mathbb{R}^d  \times [0, 2\pi] \times [0, 2\pi]^d \), such that the Hamiltonian in these variables is given by
    \begin{equation}
        E(h, a, \xi, \theta) = 4h + 2|a|^2
        ,
    \end{equation}
    so that the flow in variable \( (h, a, \xi, \theta) \) is given by
    \begin{equation}
        \label{eqn: lemma -- HO exact integration of parameters - update of action-angle variables}
        \begin{aligned}
            a(t)      & = a(0),           \\
            \theta(t) & = \theta(0) + 2t, \\
            h(t)      & = h(0),           \\
            \xi(t)    & = \xi(0) - 4t.
        \end{aligned}
    \end{equation}
    We have the explicit formulae:
    \begin{equation}
        \label{eqn: lemma -- HO exact integration of parameters - update of parameters - with v}
        \begin{aligned}
            A(t)       & = A(0)  \left( \frac{L(t)}{L(0)}  \right)^{\frac{2-d}{2}},                                                                                     \\
            e^{4k(t)}  & = L(t)^2 = 2h(t) - \cos(\xi(t)) \sqrt{4h(t)^2 - 1},                                                                                            \\
            B(t)       & = 2\sin(\xi(t))\sqrt{4h(t)^2-1},                                                                                                               \\
            X_i(t)     & = \sin(\theta_i(t)) \sqrt{2a_i(t)}, \quad i=1, \dots, d,                                                                                       \\
            \beta_i(t) & = \cos(\theta_i(t)) \sqrt{2a_i(t)}, \quad i=1, \dots, d,                                                                                       \\
            %\gamma(t) &= \gamma(0) + \sum_{l=1}^d a_l(0) \left[ \sin(2\theta_l(t)) - \sin(2\theta_l(0)) \right] \\
            \gamma(t)  & =
            \gamma(0) + \sum_{l=1}^d \frac{a_l(0)}{2} \left[ \sin(2\theta_l(t)) - \sin(2\theta_l(0)) \right]                                                            \\
            s(t)       & = - \frac{1}{2} \arctan\left( \left( 2h(0) + \sqrt{4h(0)^2 - 1} \right) \tan\left( \frac{\xi(0)}{2} - 2t \right) \right)                       \\
                       & \qquad + \frac{1}{2} \arctan\left( \left( 2h(0) + \sqrt{4h(0)^2 - 1} \right) \tan\left( \frac{\xi(0)}{2} \right) \right)  + m_t \frac{\pi}{2},
        \end{aligned}
    \end{equation}
    where, if \( m_0 \in \mathbb{Z} \) is such that \( \frac{\xi(0)}{2} \in m_0\pi + \left[ -\frac{\pi}{2}, \frac{\pi}{2}  \right] \), then \( m_t \in \mathbb{Z} \) is defined by \( \frac{\xi(t)}{2} \in (m_0-m_t)\pi + \left[ -\frac{\pi}{2}, \frac{\pi}{2}  \right] \).
\end{lemma}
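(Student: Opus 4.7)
The plan is to exploit the additive split $\mathcal{H}=\mathcal{E}(k,B)+\mathcal{R}(X,\beta)$ into two independent canonical subsystems and construct action–angle coordinates for each block separately. The $\mathcal{R}$ part is the standard $d$-dimensional isotropic harmonic oscillator, so I would introduce coordinate-wise polar variables $X_i=\sqrt{2a_i}\sin\theta_i$, $\beta_i=\sqrt{2a_i}\cos\theta_i$ for $i=1,\dots,d$; a one-line Jacobian computation verifies symplecticity (namely $dX_i\wedge d\beta_i=da_i\wedge d\theta_i$) and turns $\mathcal{R}$ into a linear function of the $a_i$. The flow $\dot a_i=0$, $\dot\theta_i=2$ in these variables is then immediate from Hamilton's equations.

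The main work lies in the $(k,B)$ block. Since $L^2=e^{4k}$ satisfies the quadratic $(L^2)^2-4h\,L^2+(1+B^2/4)=0$ on the level set $\{\mathcal{E}=4h\}$, I would propose the ansatz
\[
L^2=2h-\cos\xi\sqrt{4h^2-1},\qquad B=2\sin\xi\sqrt{4h^2-1}.
\]
Substituting and using the identity $(2h-\sqrt{4h^2-1})(2h+\sqrt{4h^2-1})=1$ should give $\mathcal{E}=4h$ identically. To check that $(\xi,h)$ are canonical conjugates, I would compute $dB\wedge dk$ via the chain rule: after differentiating $4k=\log L^2$ and $B$ in $(\xi,h)$, the trigonometric identity $\sin^2\xi+\cos^2\xi=1$ should collapse the Jacobian determinant to $(\cos\xi\sqrt{4h^2-1}-2h)/L^2=-1$, yielding $dB\wedge dk=dh\wedge d\xi$. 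Combined with the harmonic block, this is the desired symplectic change of variable, and since the reduced Hamiltonian is simply $4h$ (plus the contribution of the $a_i$'s), the equations of motion become trivial and deliver \eqref{eqn: lemma -- HO exact integration of parameters - update of action-angle variables}.

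It remains to integrate the three non-Hamiltonian quantities $A$, $\gamma$, $s$ along the flow. For $A$, dividing $A_s/A=\frac{d-2}{2}B$ by $L_s/L=-B$ eliminates $B$ and gives $(\log A)_s=-\frac{d-2}{2}(\log L)_s$, hence $A\,L^{(d-2)/2}$ is conserved. For $\gamma$, substituting the action–angle expressions into $\gamma_t=|\beta|^2-|X|^2=\sum_i 2a_i\cos(2\theta_i)$ yields, since $a_i$ is constant and $\theta_i$ is affine in $t$, an elementary primitive in $\sin(2\theta_i)$. Finally, the time change $s(t)=\int_0^t L^{-2}(\tau)\,d\tau$ reduces via the substitution $\phi=\xi(0)-4\tau$ to the classical integral $\int d\phi/(2h-\cos\phi\sqrt{4h^2-1})$, with discriminant $(2h)^2-(4h^2-1)=1$; the standard antiderivative then takes the form $\arctan\!\bigl((2h+\sqrt{4h^2-1})\tan(\phi/2)\bigr)$ thanks to the same identity used above. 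The only delicate point—and the step I expect to require the most care—is the branch selection: each time $\xi(0)/2-2t$ crosses a half-odd multiple of $\pi$, the principal branch of arctan jumps by $\pi/2$, and the integer $m_t$ in the statement is exactly the winding count of these crossings that restores continuity of $s(t)$.
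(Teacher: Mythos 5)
Your proposal is correct, and it reaches the paper's formulas by a partly different route. The overall architecture (splitting \( \mathcal{H} \) into the \( (X,\beta) \) and \( (k,B) \) blocks, then integrating \( A \), \( \gamma \) and \( s \) along the resulting flow) coincides with the paper's, as do your treatments of \( \gamma \) and of \( s(t) \) --- including the tangent half-angle substitution, the identity \( c_1^2-c_2^2=1 \), and the winding integer \( m_t \) that restores continuity of the arctangent. Where you genuinely differ is in the \( (k,B) \) block and in \( A \): the paper \emph{constructs} the action--angle pair by the generating-function method, \( S(k,\mathcal{E})=2\int_{k_0}^{k}\sqrt{e^{4z}\mathcal{E}-e^{8z}-1}\,dz \), computes \( \phi=\partial_{\mathcal{E}}S \) as an explicit \( \arcsin \), and only then rescales to \( (\xi,h)=(4\phi,\mathcal{E}/4) \) and inverts to obtain \( L^2=2h-\cos\xi\sqrt{4h^2-1} \), \( B=2\sin\xi\sqrt{4h^2-1} \); you instead take these formulas as an ansatz and verify directly that \( \mathcal{E}=4h \) (indeed \( B^2/4+1+L^4=4hL^2 \)) and that \( dk\wedge dB=d\xi\wedge dh \). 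This verification is shorter, and it sidesteps the paper's restriction to \( \{B>0\} \) and the \( \xi\in[0,\pi] \) discussion since the ansatz is smooth in \( (h,\xi) \) whenever \( 4h^2>1 \), at the cost of not explaining where the formulas come from --- legitimate here because the lemma supplies them. Likewise, your remark that \( A\,L^{(d-2)/2} \) is a first integral replaces the paper's explicit integration of the trigonometric expression for \( A_t \) and yields \( A(t)=A(0)\left(L(t)/L(0)\right)^{\frac{2-d}{2}} \) more directly. Two small bookkeeping points: in the harmonic block the symplectic identity should read \( dX_i\wedge d\beta_i=d\theta_i\wedge da_i \) (your sign is off, though the explicit solutions you quote do satisfy \( X_t=2\beta \), \( \beta_t=-2X \)), and your reading of \( \mathcal{R} \) as linear in the \( a_i \), i.e.\ \( 2\sum_i a_i \), is the one consistent with \( \dot\theta_i=2 \); the statement's \( 2|a|^2 \) should be understood accordingly.
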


The proof of this Lemma is given in Appendix \ref{appendix: proof of lemma exact integration of parameters}.
If one knows the parameters \( (A, L, B, X, \beta, \gamma) \), it suffices to apply \eqref{eqn: lemma -- HO exact integration of parameters - update of parameters - with v} in order to update them.
Then we combine these expressions with the decomposition \eqref{eqvsy} and the expression of $s(t)$ to obtain the expression of $u(t,x)$.

In practice, one knows the bubble's parameters and needs first to obtain action-angle variables. We have the following result:
\begin{lemma}
    \label{lemma: HO change of variables between params and action-angle variables}
    The change of variables \( (L, B, X, \beta) \mapsto (h, a, \xi, \theta) \) is explicit, and at time \( t=0 \) we have
    \begin{equation}
        \label{eqn: lemma -- HO exact integration of parameters - change of variables}
        \begin{aligned}
            a_i(0)      & = \frac{1}{2} \left( X_i(0)^2 + \beta_i(0)^2 \right), i=1, \dots, d, \\
            \theta_i(0) & = \arctan\left( \frac{X_i(0)}{\beta_i(0)}  \right), i=1, \dots, d,   \\
            h(0)        & = \frac{L(0)^4 + 1 + \frac{B(0)^2}{4}}{4L(0)^2},                     \\
            \xi(0)      & = \arctan\left( \frac{B(0)}{4h(0) - 2L(0)^2}  \right),
        \end{aligned}
    \end{equation}
    whenever \( \theta_i(0) \) and \( \xi(0) \) are well-defined.
    When any one of them is ill-defined -- which happens when \( X_i(0) = \beta_i(0) = 0, i\in \{1, \dots, d\} \) or when \( L(0)=1 \text{ and } B(0)=0 \) -- any value can be taken and the time-evolution of \( A(t), L(t), B(t), X(t), \beta(t) \) and \( \gamma(t) \) will not depend on the value.
    Moreover, in the cases where \( a_i(0) = 0,\, i\in\{1, \dots, d\} \) or \( h(0) = \frac{1}{2} \), the formula \eqref{eqn: lemma -- HO exact integration of parameters - change of variables} for \( \theta_i(0),\, i\in \{1, \dots, d\} \) or \( \xi(0) \) are ill-defined, but any value can be taken as a substitution and this will not affect the behavior of the mappings \( t\mapsto \gamma(t) \) and \( t \mapsto s(t) \).
\end{lemma}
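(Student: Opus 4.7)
The plan is to verify the explicit formulae by direct algebraic inversion of the update relations already established in Lemma \ref{lemma: HO exact integration of parameters}, since existence of the symplectic change of variables is already known; the task here is merely to make it explicit. The verification decouples naturally into two independent pieces because $\mathcal{H} = \mathcal{E}(k,B) + \mathcal{R}(X,\beta)$, and each piece is inverted separately.

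First, for the $(X,\beta) \leftrightarrow (a,\theta)$ part: starting from $X_i(t) = \sin(\theta_i(t))\sqrt{2a_i(t)}$ and $\beta_i(t) = \cos(\theta_i(t))\sqrt{2a_i(t)}$, squaring and summing yields $X_i^2 + \beta_i^2 = 2a_i$, while taking the ratio gives $\tan\theta_i = X_i/\beta_i$, so the claimed expressions for $a_i(0)$ and $\theta_i(0)$ follow directly. Second, for the $(k,B) \leftrightarrow (h,\xi)$ part: using $L^2 = 2h - \cos\xi\sqrt{4h^2-1}$ and $B = 2\sin\xi\sqrt{4h^2-1}$, I will compute
\begin{align*}
L^4 + 1 + \tfrac{B^2}{4}
&= \bigl(2h - \cos\xi\sqrt{4h^2-1}\bigr)^2 + 1 + \sin^2\xi\,(4h^2-1) \\
&= 8h^2 - 4h\cos\xi\sqrt{4h^2-1} \\
&= 4h\bigl(2h - \cos\xi\sqrt{4h^2-1}\bigr) = 4hL^2,
\end{align*}
from which the formula $h(0) = (L(0)^4 + 1 + B(0)^2/4)/(4L(0)^2)$ follows. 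The identities $\cos\xi\sqrt{4h^2-1} = 2h - L^2$ and $\sin\xi\sqrt{4h^2-1} = B/2$ then give $\tan\xi = B/(4h - 2L^2)$, yielding the arctangent expression for $\xi(0)$.

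The remaining work is to handle the degenerate cases. When $X_i(0) = \beta_i(0) = 0$, we have $a_i(0) = 0$, and the updates in \eqref{eqn: lemma -- HO exact integration of parameters - update of parameters - with v} then force $X_i(t) \equiv \beta_i(t) \equiv 0$; the contribution of index $i$ to $\gamma(t)$ is proportional to $a_i(0) = 0$, and $s(t)$ does not involve $\theta_i$ at all, so any value of $\theta_i(0)$ works. The symmetric case $L(0) = 1$, $B(0) = 0$ gives $h(0) = 1/2$, hence $\sqrt{4h(0)^2-1} = 0$, immediately forcing $L(t)^2 \equiv 1$ and $B(t) \equiv 0$ regardless of $\xi(0)$. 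The only nontrivial check is that $s(t)$ itself is insensitive to $\xi(0)$ in this case, which I expect to be the main (though mild) obstacle: in the limit $h(0) \to 1/2$ the coefficient $2h(0) + \sqrt{4h(0)^2-1}$ tends to $1$, so with an appropriate choice of branch index $m_t$ the two arctan terms reduce modulo $\pi$ to $\xi(0)/2 - 2t$ and $\xi(0)/2$, whose half-difference yields $s(t) = t$; alternatively one can recompute $s(t) = \int_0^t L(\tau)^{-2}\,d\tau = t$ directly in this degenerate setting.
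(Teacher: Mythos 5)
Your proposal is correct and takes essentially the same route as the paper's proof: direct algebraic inversion of the explicit formulae from Lemma \ref{lemma: HO exact integration of parameters} (square and sum for \( a_i \), tangent ratio for \( \theta_i \) and \( \xi \), elimination of \( \xi \) to get \( h(0)=\frac{L(0)^4+1+B(0)^2/4}{4L(0)^2} \)), followed by the same treatment of the degenerate cases \( a_i(0)=0 \) and \( h(0)=\tfrac12 \), where the relevant parameters become constant and \( s(t)=t \) independently of the ill-defined angle.
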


The proof of Lemma \ref{lemma: HO change of variables between params and action-angle variables} is given in Appendix \ref{appendix: proof change of variables HO}.
Note that to define \( \xi(0) \) we could also use equation \eqref{proof: HO proof -- definition of xi with k}, but this is not appropriate from a computational point of view.
Some more details are given in Remark \ref{rmk: HO -- numerical considerations}.

Using Lemmata \ref{lemma: HO exact integration of parameters} and \ref{lemma: HO change of variables between params and action-angle variables}, we are now able to obtain a straightforward numerical algorithm which simulates exactly the evolution of bubbles according to the Harmonic Oscillator on a time interval \( [0, T] \).
It is described in Algorithm \ref{algo: HO -- exact solve}.
\begin{algorithm}
    \caption{Solving the Harmonic oscillator with Bubbles}
    \label{algo: HO -- exact solve}
    \begin{algorithmic}
        \For{\( j = 1, \ldots, N \)}
        \Comment{\(j\) denotes a bubble's index in \eqref{eqn: generic discretization of psi -- ansatz}}
        \State Use \eqref{eqn: lemma -- HO exact integration of parameters - change of variables} to get the action-angle variables \( (h, a, \xi, \theta) \) at time 0.
        \State Use \eqref{eqn: lemma -- HO exact integration of parameters - update of action-angle variables} to update the variables \( (h, a, \xi, \theta) \) up to time \( T \).
        \State Use \eqref{eqn: lemma -- HO exact integration of parameters - update of parameters - with v} to get the parameters of bubble \( u_j \) at time \( T \).
        \State Use \eqref{eqvsy} to update the Hermite decomposition of the bubble.         \EndFor
    \end{algorithmic}
\end{algorithm}

\begin{remark}[Numerical considerations]
    \label{rmk: HO -- numerical considerations}
    Here are a few remarks about Algorithm \ref{algo: HO -- exact solve}:
    \begin{itemize}
        \item When applying equation \eqref{eqn: lemma -- HO exact integration of parameters - change of variables} to obtain the action-angle variables from the bubbles' parameters, it is advised to use the function \( \arctanTwo(y, x) \) instead of \( \arctan(y/x) \) because it allows to obtain an angle lying in \( (-\pi, \pi] \) instead of \( (-\pi/2, \pi/2] \) by taking into account the signs of both \( x \) and \( y \).
              This is also the reason why we do not define \( \xi(0) \) by \eqref{proof: HO proof -- definition of xi with k}.
              Moreover most numerical implementations of \( \arctanTwo \) return a finite value for \( \arctanTwo(0, 0) \), which avoids the manual tuning of a numerical threshold to know whether \( a_i(0) \) or \( h(0) \) vanish numerically or not.
              We recall that in this case the exact value returned does not impact the behavior of \( t\mapsto (L(t), B(t), X(t), \beta(t), \gamma(t),s(t)) \).

        \item The family \( \left\{ \varphi_n =   H_{n_1} \cdots H_{n_d} : n = (n_1, \dots, n_d) \in \mathbb{N}^d \right\} \) is an orthonormal family of \( \mathbb{L}^2(\mathbb{R}^d) \), hence the discretization of any initial condition is done by calculating (or choosing) the Hermite coefficients of the functions $v_j$ in the decomposition \eqref{eqn: generic discretization of psi -- ansatz}.

        \item The algorithm yields an exact solution as soon as the initial data is a sum of bubbles.
              If not, then the only error committed is the discretization error when approximating the initial condition \( \psi(t=0) \) by the ansatz \eqref{eqn: generic discretization of psi -- ansatz}.

        \item This numerical algorithm does not need any discretization in time nor in space.

        \item The solution obtained is the \emph{exact} solution of the equation \eqref{eqn: cNLS with psi -- linear part} defined on the whole space \( \mathbb{R}^d \), and no numerical boundary conditions are needed.
    \end{itemize}
\end{remark}

%%%%%%%%%%%%%%%%%%%%%%%%%%%%%%%%%%%%%%%%%%%%
%%%%%%%%%%%%%%%%%%%%%%%%%%%%%%%%%%%%%%%%%%%%
%%%%%%%%%%%%%%%%%%%%%%%%%%%%%%%%%%%%%%%%%%%%
%%%%%%%%%%%%%%%%%%%%%%%%%%%%%%%%%%%%%%%%%%%%
%%%%%%%%%%%%%%%%%%%%%%%%%%%%%%%%%%%%%%%%%%%%

\section{The Dirac-Frenkel-MacLachlan principle}
\label{sect: DFMP}

In this section we consider the Schrödinger equation \eqref{eqn: cNLS with psi}.
As it has been explained before, the equation consists in two parts: the linear part \eqref{eqn: cNLS with psi -- linear part}, and the nonlinear part \eqref{eqn: cNLS with psi -- nonlinear part}.
Section \ref{sect: The Harmonic Oscillator} was dedicated to solving the Harmonic oscillator, namely the linear part.
We are interested now in solving the nonlinear part, and as it is usually done for numerical simulations, we will use a splitting method (see for instance \cite{mclachlanSplittingMethods2002,hairerGeometricNumericalIntegration2006,casasHighorderHamiltonianSplitting2017}).
This will allow us to solve \eqref{eqn: cNLS with psi} by solving separately \eqref{eqn: cNLS with psi -- linear part} and \eqref{eqn: cNLS with psi -- nonlinear part}, one after the other.
By doing so, a splitting error is made, which depends on the timestep \( \Delta t \), and the order of the error depends on the specific splitting method.
It is also possible to apply high-order splitting methods.

\vspace{1em}

We focus on approximating numerically the solution to \eqref{eqn: cNLS with psi -- nonlinear part}:
\begin{equation*}
    i \partial_{t} \psi = \psi |\psi|^2
    .
\end{equation*}
We are free to use any method we want, but one has to keep in mind that Algorithm \ref{algo: HO -- exact solve} solves \eqref{eqn: cNLS with psi -- linear part} exactly when \( \psi \) is expressed under the form \eqref{eqn: generic discretization of psi -- ansatz}, i.e. as a sum of bubbles.
Therefore we would like the approximate solution to \eqref{eqn: cNLS with psi -- nonlinear part} to keep this particular form.
This naturally calls for the use of the Dirac-Frenkel-MacLachlan principle (abbreviated DFMP).
For more details, see \cite[Sect.~3]{lasserComputingQuantumDynamics2020}.

In theory, the calculation can be performed in a very general situation, when all the $v_j$ involved in \eqref{eqn: generic discretization of psi -- ansatz}-\eqref{eqn: generic discretization of psi -- expression for uj} are given in terms of Hermite polynomials.
In essence, the only difficulty lies in the evaluation of general integrals of products of Hermite functions in different modulation frames, which can be done using generating functions techniques for instance.
Another alternative would be to use nonlinear solitons and rely on numerical evaluations of the corresponding integrals.

In the remainder of this paper, we will consider the primary case by considering
\( \mathcal{M} \) be a manifold of complex-valued Gaussian functions:
\begin{equation}
    \label{eqn: manifold for the dirac-frenkel-maclachlan principle}
    \mathcal{M} := \left\{
        u \in \mathbb{L}^2( \mathbb{R}^d )
        \left|
        \begin{aligned}
            u(x) = \sum_{j=1}^N \frac{A_j}{L_j} e^{i\gamma_j + i\beta_j \cdot (x - X_j) - \frac{2+iB_j}{4L_j^2} \left| x - X_j \right|^2} & , \\
            A_j, B_j, \gamma_j \in \mathbb{R},\, L_j \in \mathbb{R}_+^*, \, X_j, \beta_j \in \mathbb{R}^d                                 &
        \end{aligned}
    \right.
    \right\}
    .
\end{equation}

\begin{remark}
    Note that the functions \( e^{-\frac{|y|^2}{2}} \) are simply \( \varphi_{(0, \dots, 0)}(y) = H_0(y_1) \cdots H_0(y_d) \), hence we can use Section \ref{sect: HO -- Hamiltonian formulation for the free bubble} for the linear part.
\end{remark}

We look for a function \( u \in \mathcal{M} \) that solves \eqref{eqn: cNLS with psi -- nonlinear part} on \( \mathcal{M} \).
More precisely, \( u \) is defined such that its time derivative lies in the tangent space of \( \mathcal{M} \) at \( u \), \( \mathcal{T}_{u(t)} \mathcal{M} \), and such that the residual of equation \eqref{eqn: cNLS with psi -- nonlinear part} is orthogonal to the tangent space.
That is,
\begin{equation}
    \label{eqn: definition of u(t) in DFMP}
    \begin{aligned}
         & \partial_{t} u(t) \in \mathcal{T}_{u(t)} \mathcal{M}, \quad \text{ such that }                                      \\
         & \qquad \langle f, i \partial_{t} u(t) - u(t) |u(t)|^2 \rangle = 0, \, \forall f \in \mathcal{T}_{u(t)} \mathcal{M}.
    \end{aligned}
\end{equation}

\begin{remark}
    The definition of \( \partial_{t} u(t) \) via \eqref{eqn: definition of u(t) in DFMP}, initially proposed by Dirac and Frenkel \cite{diracNoteExchangePhenomena1930,frenkelWaveMechanicsAdvanced1934}, has been later criticized by MacLachlan \cite{mclachlanVariationalSolutionTimedependent1964}.
    He proposed an alternative approach, which would consist in minimizing the quantity
    \begin{equation*}
        \left|\left| i \partial_{t} u(t) - |u(t)|^2 u(t) \right|\right|^2
        .
    \end{equation*}
    However, the two formulations are equivalent if the tangent space \( \mathcal{T}_{u(t)} \mathcal{M} \) is \( \mathbb{C} \)-linear \cite{broeckhoveEquivalenceTimedependentVariational1988}.
    This is the case here because multiplying by the complex unit \( i \) simply amounts to \( \gamma_j \mapsto \gamma_j + \frac{\pi}{2} \).
    Therefore, the approaches by Dirac-Frenkel and MacLachlan are equivalent.
\end{remark}
\bigskip

Let \( B_{u(t)} \) be a basis of \( \mathcal{T}_{u(t)} \mathcal{M} \), then \eqref{eqn: definition of u(t) in DFMP} is equivalent to
\begin{equation}
    \label{eqn: definition of u(t) in DFMP -- with general basis of Tu M}
    \begin{aligned}
         & \partial_{t} u(t) \in \mathcal{T}_{u(t)} \mathcal{M}, \quad \text{ such that }                               \\
         & \qquad \langle f, i \partial_{t} u(t) \rangle = \langle f, u(t) |u(t)|^2 \rangle, \, \forall f \in B_{u(t)}.
    \end{aligned}
\end{equation}
A family (which may happen to be linearly dependent) spanning the tangent space \( \mathcal{T}_{u(t)} \mathcal{M} \) is given by
\begin{equation}
    \label{eqn: base of Tu M in DFMP}
    \begin{aligned}
        B_{u(t)}
         & = \left\{ e^{i\Gamma_j(y_j) - \frac{| y |^2}{2}},
            (y_j)_1 e^{i\Gamma_j(y_j) - \frac{| y_j |^2}{2}},
            \dots,
            (y_j)_d e^{i\Gamma_j(y_j) - \frac{| y_j |^2}{2}},
            |y_j|^2 e^{i\Gamma_j(y_j) - \frac{| y_j |^2}{2}}
        : j=1, \dots, N   \right\},                                                             \\
         & =: \left\{ b_{j,1}, b_{j, 2}, \dots, b_{j, d+1}, b_{j,d+2} : j=1, \dots, N \right\},
    \end{aligned}
\end{equation}
where we defined
\begin{equation*}
    \Gamma_j(y_j) := \gamma_j + L_j \beta_j \cdot y_j - \frac{B_j}{4} |y_j|^2
    .
\end{equation*}

Thus, \eqref{eqn: definition of u(t) in DFMP -- with general basis of Tu M} is equivalent to
\begin{equation}
    \label{eqn: definition of u(t) in DFMP -- with particular basis of Tu M}
    \langle i \partial_{t} u(t), b_{j, l} \rangle = \langle u |u|^2, b_{j, l}  \rangle
    , \quad j = 1, \dots, N,\quad l=1, \dots, d+2
    .
\end{equation}

The next step consists in expressing \eqref{eqn: definition of u(t) in DFMP -- with particular basis of Tu M} as a linear system involving the parameters of the bubbles and their time derivative.
We then solve the linear system, which yields ODEs on the parameters that we can integrate numerically.
The main advantage of this approach is that it guarantees to keep the approximate solution of \eqref{eqn: cNLS with psi -- nonlinear part} as a sum of \( N \) bubbles.

In order to obtain the linear system, we first have to get the expression of \( i\partial_{t} u(t) \)  when \( v(y) = e^{-\frac{|y|^2}{2}} \): by summing \eqref{eqn: i dt uj} over \( j=1,\dots, N \), one has
\begin{equation}
    \label{eqn: i dt u}
    \begin{aligned}
        i \partial_{t} u
        = \sum_{j=1}^N \frac{u_j}{L_j^2}
         & \left\{ |y_j|^2 \left( i \frac{(L_j)_s}{L_j} - \frac{B_j (L_j)_s}{2L_j} + \frac{(B_j)_s}{4} \right) \right. \\
         & \quad+ y_j \cdot \left( -L_j(\beta_j)_s + i\frac{(X_j)_s}{L_j} - \frac{B_j}{2L_j} (X_j)_s  \right)          \\
         & \quad\left. + i\frac{(A_j)_s}{A_j} - i\frac{(L_j)_s}{L_j} + \beta \cdot (X_j)_s - (\gamma_j)_s \right\}.
    \end{aligned}
\end{equation}
More concisely, we have
\begin{equation}
    \label{eqn: i dt u -- with basis elements}
    \begin{aligned}
        i \partial_{t} u
         & = \sum_{j=1}^N \frac{A_j}{L_j^3} e^{i\Gamma_j -\frac{| y_j |^2}{2}}
        \left\{ |y_j|^2 \left( E_j^{(5)} + i E_j^{(6)}  \right) \right.                                                                                            \\
         & \hspace{6em}+ y_j \cdot \left( E_j^{(3)(1, \dots, d)} + iE_j^{(4)(1, \dots, d)}   \right)                                                               \\
         & \hspace{6em}\left. + \left( E_j^{(1)} + iE_j^{(2)} \right) \right\}                                                                                     \\
         & = \sum_{j=1}^N \frac{A_j}{L_j^3}
        \left\{ b_{j, 1} \left( E_j^{(1)} + iE_j^{(2)} \right) + b_{j, 2} \left( E_j^{(3)(1)} + iE_j^{(4)(1)} \right) \right.                                      \\
         & \hspace{6em}\ \, \cdots  + b_{j, d+1} \left( E_j^{(3)(d)} + iE_j^{(4)(d)} \right) + \left. b_{j, d+2} \left( E_j^{(5)} + i E_j^{(6)}  \right) \right\},
    \end{aligned}
\end{equation}
where
\begin{equation}
    \label{eqn: DFMP -- definition of the Ej}
    \begin{alignedat}{3}
        &E_j^{(1)} := \beta_j \cdot (X_j)_s - (\gamma_j)_s,        &\qquad& E_j^{(2)} := \frac{(A_j)_s}{A_j} - \frac{(L_j)_s}{L_j}, \\
        &E_j^{(3)(l)} := -L_j((\beta_j)_l)_s - \frac{B_j}{2L_j} ((X_j)_l)_s, &\qquad& E_j^{(4)(l)} := \frac{((X_j)_l)_s}{L_j}, \qquad l = 1, \dots, d, \\
        &E_j^{(5)} := \frac{(B_j)_s}{4} - \frac{B_j (L_j)_s}{2L_j}, &\qquad& E_j^{(6)} := \frac{(L_j)_s}{L_j},
    \end{alignedat}
\end{equation}
and where \( E_j^{(k)(1, \dots, d)} \) denotes the vector \( (E_j^{(k)(1)}, \dots,  E_j^{(k)(d)}) \).
We recall the subscript \( {}_s \) denotes the derivative with respect to time \( s \).

According to \eqref{eqn: definition of u(t) in DFMP -- with particular basis of Tu M}, we then want to project \( i \partial_{t} u(t) \) against every element of \( B_{u(t)} \).
We obtain the following linear system:
\begin{equation}
    \label{eqn: linear system for DFMP}
    \mathbf{A} \mathbf{E} = S
    ,
\end{equation}
where
\begin{equation*}
    \mathbf{A} :=
    \begin{pmatrix}
        \langle b_{1, 1}, b_{1, 1} \rangle   & \langle b_{1, 2}, b_{1, 1} \rangle   & \dots & \langle b_{N, d+1}, b_{1, 1} \rangle   & \langle b_{N, d+2}, b_{1, 1} \rangle   \\
        \vdots                               &                                      &       &                                        & \vdots                                 \\
        \langle b_{1, 1}, b_{N, d+2} \rangle & \langle b_{1, 2}, b_{N, d+2} \rangle & \dots & \langle b_{N, d+1}, b_{N, d+2} \rangle & \langle b_{N, d+2}, b_{N, d+2} \rangle \\
    \end{pmatrix}
    \in \mathbb{C}^{(d+2)N, (d+2)N}
    ,
\end{equation*}
\begin{equation*}
    \mathbf{E} :=
    \begin{pmatrix}
        \frac{A_1}{L_1^3} \left( E_1^{(1)} + i E_1^{(2)} \right)       \\
        \frac{A_1}{L_1^3} \left( E_1^{(3)(1)} + i E_1^{(4)(1)} \right) \\
        \vdots                                                         \\
        \frac{A_1}{L_1^3} \left( E_1^{(3)(d)} + i E_1^{(4)(d)} \right) \\
        \frac{A_1}{L_1^3} \left( E_1^{(5)} + i E_1^{(6)} \right)       \\
        \vdots                                                         \\
        \frac{A_j}{L_j^3} \left( E_j^{(1)} + i E_j^{(2)} \right)       \\
        \frac{A_j}{L_j^3} \left( E_j^{(3)(1)} + i E_j^{(4)(1)} \right) \\
        \vdots                                                         \\
        \frac{A_j}{L_j^3} \left( E_j^{(3)(d)} + i E_j^{(4)(d)} \right) \\
        \frac{A_j}{L_j^3} \left( E_j^{(5)} + i E_j^{(6)} \right)       \\
        \vdots                                                         \\
        \frac{A_N}{L_N^3} \left( E_N^{(1)} + i E_N^{(2)} \right)       \\
        \frac{A_N}{L_N^3} \left( E_N^{(3)(1)} + i E_N^{(4)(1)} \right) \\
        \vdots                                                         \\
        \frac{A_N}{L_N^3} \left( E_N^{(3)(d)} + i E_N^{(4)(d)} \right) \\
        \frac{A_N}{L_N^3} \left( E_N^{(5)} + i E_N^{(6)} \right)
    \end{pmatrix}
    \in \mathbb{R}^{(d+2)N},
\end{equation*}
and
\begin{equation*}
    \mathbf{S} :=
    \begin{pmatrix}
        \langle u|u|^2, b_{1, 1} \rangle   \\
        \vdots                             \\
        \langle u|u|^2, b_{N, d+2} \rangle \\
    \end{pmatrix}
    \in \mathbb{C}^{(d+2)N}
    .
\end{equation*}

The matrix \( \mathbf{A} \) is the Gram matrix of the family \( B_{u(t)} \), which obviously depends on time.
In order to solve the linear system \eqref{eqn: linear system for DFMP} we shall use the Moore-Penrose pseudo-inverse which always exists, and which corresponds to the Least Squares solution if the matrix \( \mathbf{A}^* \mathbf{A} \) is invertible.
The matrix \( \mathbf{A} \) is invertible if and only if \( B_{u(t)} \) is a linearly independent family of \( \mathbb{L}^2( \mathbb{R}^d ) \).
We can already notice that if two bubbles have the same parameters then the family will be linearly dependent: this is why the Moore-Penrose pseudo-inverse is used instead of \( \mathbf{A}^{-1} \), which is not always well-defined.

Once the linear system \eqref{eqn: linear system for DFMP} is solved, we obtain \( \mathbf{E} \), from which we can update the modulation parameters.
In order to solve numerically the linear system, we shall rewrite it under a more convenient form.
Let \( \mathbf{A}_\Re := \Re (\mathbf{A}) \), \( \mathbf{A}_\Im := \Im (\mathbf{A}) \), \( \mathbf{E}_\Re := \Re (\mathbf{E}) \), \( \mathbf{E}_\Im := \Im (\mathbf{E}) \), \( \mathbf{S}_\Re := \Re (\mathbf{S}) \), and \( \mathbf{S}_\Im := \Im (\mathbf{S}) \).
Then, \eqref{eqn: linear system for DFMP} writes:
\begin{align}
    \mathbf{A} \mathbf{E} = \mathbf{S}
     & \iff (\mathbf{A}_\Re + i \mathbf{A}_\Im)(\mathbf{E}_\Re + i \mathbf{E}_\Im) = \mathbf{S}_\Re + i \mathbf{S}_\Im \nonumber \\
     & \iff \left\{
        \begin{aligned}
            \mathbf{A}_\Re \mathbf{E}_\Re - \mathbf{A}_\Im \mathbf{E}_\Im & = \mathbf{S}_\Re \\
            \mathbf{A}_\Im \mathbf{E}_\Re + \mathbf{A}_\Re \mathbf{E}_\Im & = \mathbf{S}_\Im
        \end{aligned}
    \right. \nonumber                                                                                                            \\
     & \iff
    \begin{pmatrix}
        \mathbf{A}_\Re & - \mathbf{A}_\Im \\
        \mathbf{A}_\Im & \mathbf{A}_\Re
    \end{pmatrix}
    \begin{pmatrix}
        \mathbf{E}_\Re \\
        \mathbf{E}_\Im
    \end{pmatrix}
    =
    \begin{pmatrix}
        \mathbf{S}_\Re \\
        \mathbf{S}_\Im
    \end{pmatrix}
    \label{eqn: linear system for DFMP - real matrices}
    .
\end{align}
It is more convenient to solve \eqref{eqn: linear system for DFMP - real matrices} than \eqref{eqn: linear system for DFMP}, because we only have to deal with real matrices and vectors.

\begin{remark}
    We first tried to solve \eqref{eqn: linear system for DFMP} using the Moore-Penrose pseudo-inverse, however it yielded incomprehensible results.
    After some investigation, we found out that the issue seems to be the complex numbers involved, and that they do not mix well with the pseudo-inverse.
    The linear system \eqref{eqn: linear system for DFMP - real matrices} yields much better results.
\end{remark}

Once \eqref{eqn: linear system for DFMP - real matrices} is solved, we have to update the bubbles parameters according to \eqref{eqn: DFMP -- definition of the Ej}.
The parameters of the bubble labelled \( j \) can be updated with \( (\mathbf{E}_\Re)_k \) and \( (\mathbf{E}_\Im)_k \) for \( k = (d+2)(j-1)+1, \dots, (d+2)j \).
For the sake of clarity, let
\( \mathbf{F} :=
\begin{pmatrix}
    \mathbf{E}_\Re \\
    \mathbf{E}_\Im
\end{pmatrix}
\).
Then
\begin{align*}
    \frac{A_j}{L_j^3}
    \begin{pmatrix}
        E_j^{(1)}    \\
        E_j^{(3)(1)} \\
        \vdots       \\
        E_j^{(3)(d)} \\
        E_j^{(5)}    \\
        E_j^{(2)}    \\
        E_j^{(4)(1)} \\
        \vdots       \\
        E_j^{(4)(d)} \\
        E_j^{(6)}
    \end{pmatrix}
    =    &
    \begin{pmatrix}
        (\mathbf{E}_\Re)_{(d+2)(j-1)+1}   \\
        (\mathbf{E}_\Re)_{(d+2)(j-1)+2}   \\
        \vdots                            \\
        (\mathbf{E}_\Re)_{(d+2)(j-1)+d+1} \\
        (\mathbf{E}_\Re)_{(d+2)(j-1)+d+2} \\
        (\mathbf{E}_\Im)_{(d+2)(j-1)+1}   \\
        (\mathbf{E}_\Im)_{(d+2)(j-1)+2}   \\
        \vdots                            \\
        (\mathbf{E}_\Im)_{(d+2)(j-1)+d+1} \\
        (\mathbf{E}_\Im)_{(d+2)(j-1)+d+2}
    \end{pmatrix}
    =
    \begin{pmatrix}
        \mathbf{F}_{(d+2)(j-1)+1}            \\
        \mathbf{F}_{(d+2)(j-1)+2}            \\
        \vdots                               \\
        \mathbf{F}_{(d+2)(j-1)+d+1}          \\
        \mathbf{F}_{(d+2)(j-1)+d+2}          \\
        \mathbf{F}_{(d+2)N + (d+2)(j-1)+1}   \\
        \mathbf{F}_{(d+2)N + (d+2)(j-1)+2}   \\
        \vdots                               \\
        \mathbf{F}_{(d+2)N + (d+2)(j-1)+d+1} \\
        \mathbf{F}_{(d+2)N + (d+2)(j-1)+d+2}
    \end{pmatrix}
    =:
    \begin{pmatrix}
        F_j^{(1)}    \\
        F_j^{(3)(1)} \\
        \vdots       \\
        F_j^{(3)(d)} \\
        F_j^{(5)}    \\
        F_j^{(2)}    \\
        F_j^{(4)(1)} \\
        \vdots       \\
        F_j^{(4)(d)} \\
        F_j^{(6)}
    \end{pmatrix}
    \\
    \iff &
    \begin{pmatrix}
        \beta_j\cdot (X_j)_s - (\gamma_j)_s          \\
        -L_j (\beta_j)_s - \frac{B_j}{2L_j} (X_j)_s  \\
        \frac{(B_j)_s}{4} - \frac{B_j (L_j)_s}{2L_j} \\
        \frac{(A_j)_s}{A_j} - \frac{(L_j)_s}{L_j}    \\
        \frac{(X_j)_s}{L_j}                          \\
        \frac{(L_j)_s}{L_j}
    \end{pmatrix}
    = \frac{L_j^3}{A_j}
    \Re
    \begin{pmatrix}
        F_j^{(1)} \\ F_j^{(3)(1, \dots, d)} \\ F_j^{(5)} \\ F_j^{(2)} \\ F_j^{(4)(1, \dots, d)} \\ F_j^{(6)}
    \end{pmatrix}
    .
\end{align*}

Hence
\begin{equation*}
    \left\{
        \begin{aligned}
             & \beta_j\cdot (X_j)_s - (\gamma_j)_s = F_j^{(1)},                      \\
             & -L_j (\beta_j)_s - \frac{B_j}{2L_j} (X_j)_s = F_j^{(3)(1, \dots, d)}, \\
             & \frac{(B_j)_s}{4} - \frac{B_j (L_j)_s}{2L_j} = F_j^{(5)},             \\
             & \frac{(A_j)_s}{A_j} - \frac{(L_j)_s}{L_j} = F_j^{(2)},                \\
             & \frac{(X_j)_s}{L_j} = F_j^{(4)(1, \dots, d)},                         \\
             & \frac{(L_j)_s}{L_j} = F_j^{(6)},
        \end{aligned}
    \right.
    \iff
    \left\{
        \begin{aligned}
            (A_j)_s      & = A_j \left( F_j^{(2)} + F_j^{(6)} \right),                                         \\
            (L_j)_s      & = L_j F_j^{(6)},                                                                    \\
            (B_j)_s      & = 4F_j^{(5)} + 2 B_j F_j^{(6)},                                                     \\
            (X_j)_s      & = L_j F_j^{(4)(1, \dots, d)},                                                       \\
            (\beta_j)_s  & = - \frac{1}{L_j} F_j^{(3)(1, \dots, d)} - \frac{B_j}{2L_j} F_j^{(4)(1, \dots, d)}, \\
            (\gamma_j)_s & = L_j \beta_j \cdot F_j^{(4)(1, \dots, d)} - F_j^{(1)},
        \end{aligned}
    \right.
\end{equation*}
and with respect to time \( t \),
\begin{equation}
    \label{eqn: DFMP -- update of parameters with interactions -- wrt time t}
    \left\{
        \begin{aligned}
            (A_j)_s      & = \frac{A_j}{L_j^2}  \left( F_j^{(2)} + F_j^{(6)} \right),                              \\
            (L_j)_s      & = \frac{1}{L_j}  F_j^{(6)},                                                             \\
            (B_j)_s      & = \frac{4}{L_j^2} F_j^{(5)} + \frac{2}{L_j^2}  B_j F_j^{(6)},                           \\
            (X_j)_s      & = \frac{1}{L_j}  F_j^{(4)(1, \dots, d)},                                                \\
            (\beta_j)_s  & = - \frac{1}{L_j^3} F_j^{(3)(1, \dots, d)} - \frac{B_j}{2L_j^3} F_j^{(4)(1, \dots, d)}, \\
            (\gamma_j)_s & = \frac{1}{L_j}  \beta_j \cdot F_j^{(4)(1, \dots, d)} - \frac{1}{L_j^2}
            F_j^{(1)},
        \end{aligned}
    \right.
\end{equation}

\subsection{Computing coefficients of the linear system \eqref{eqn: linear system for DFMP}}

In order to be able to compute \( \mathbf{A} \) and \( \mathbf{S} \), we give the exact expression of the inner products involved.
For \( j, l = 1, \dots, N \), let
\begin{equation}
    \left |
    \begin{aligned}
        z   & := \frac{2+iB_l}{4L_l^2} + \frac{2-iB_j}{4L_j^2},                                                                                                               \\
        a   & := \frac{X_l}{L_l^2} + \frac{X_j}{L_j^2},                                                                                                                       \\
        \xi & := \frac{B_j}{2L_j^2} X_j + \beta_j - \frac{B_l}{2L_l^2} X_l - \beta_l,                                                                                         \\
        C   & = \exp\left\{i(\gamma_l - \gamma_j) - \frac{2+iB_l}{4L_l^2}  | X_l | ^2 - \frac{2-iB_j}{4L_j^2}  | X_j | ^2 - i\beta_l \cdot X_l + i\beta_j \cdot X_j \right\}.
    \end{aligned}
    \right.
\end{equation}
Those quantities obviously depend on the indices \( j \) and \( l \), but for clarity we do not write explicitly these dependences since they are pretty clear.
Then, for \( n, m = 1, \dots, d \),
\begin{align*}
    \langle b_{l, 1}, b_{j, 1} \rangle     & = C \widehat{f}(\xi)                                                                                                                                                      \\
    \langle b_{l, n+1}, b_{j, 1} \rangle   & = \frac{C}{L_l} \left( \widehat{x f}_n - (X_l)_n \widehat{f} \right)(\xi)                                                                                                 \\
    \langle b_{l, d+2}, b_{j, 1} \rangle
                                           & = \frac{C}{L_l^2} \left( \widehat{ | x | ^2 f} - 2X_l \cdot \widehat{x f} +  | X_l | ^2 \widehat{f} \right)(\xi)                                                          \\
    \langle b_{l, n+1}, b_{j, m+1} \rangle
                                           & = \frac{C}{L_j L_l} \left[ \widehat{x_n x_m f} - (X_l)_n \widehat{x_m f} - (X_j)_m \widehat{x_n f} + (X_l)_n (X_j)_m \widehat{f} \right](\xi)                             \\
    \langle b_{l, d+2}, b_{j, m+1} \rangle & =
    \frac{C}{L_l^2 L_j} \left[ \widehat{x_m | x | ^2 f} - 2X_l \cdot \widehat{x_m x f} + | X_l | ^2 \widehat{x_m f} \right.                                                                                            \\
                                           & \hspace{1cm} \left. - (X_j)_m \widehat{ | x | ^2 f} + 2(X_j)_m X_l \cdot \widehat{x f} - | X_l | ^2 (X_j)_m \widehat{f} \right](\xi)                                      \\
    \langle b_{l, d+2}, b_{j, d+2} \rangle & = \frac{C}{L_l^2 L_j^2} \left[ \widehat{ | x | ^4 f} - 2X_l \cdot \widehat{ | x | ^2 x f} +  | X_l | ^2 \widehat{ | x | ^2 f} - 2X_j \cdot \widehat{x | x | ^2 f} \right. \\
                                           & \hspace{2cm}  + 4\sum_{n, m=1}^d (X_l)_n (X_j)_m \widehat{x_n x_m f} - 2 | X_l | ^2 X_j \cdot \widehat{x f}                                                               \\
                                           & \hspace{2cm} \left. + \widehat{ | x | ^2 f}  | X_j | ^2 - 2 | X_j | ^2 X_l \cdot \widehat{x f} +  | X_l | ^2  | X_j | ^2 \widehat{f} \right](\xi)
\end{align*}
Moreover, we recall that \( \mathbf{A} \) is hermitian, so the above relations allow us to obtain all

We now compute the components of the vector \( S \).
For \( j, k, l, m=1, \dots, N \), let
\begin{equation}
    \left|
    \begin{aligned}
        C_\Im & := \exp\left\{ i\left( \gamma_k + \gamma_l - \gamma_m - \gamma_j \right) \right\}                                                                                                                         \\
              & \hspace{1em} \times \exp\left\{ i\left(\beta_j \cdot X_j + \beta_m \cdot X_m - \beta_l\cdot X_l - \beta_k \cdot X_k \right) \right\}                                                                      \\
              & \hspace{1em} \times \exp\left\{- i\left( \frac{B_k}{4L_k^2} |X_k|^2 + \frac{B_l}{4L_l^2} |X_l|^2 - \frac{B_m}{4L_m^2} |X_m|^2 - \frac{B_j}{4L_j^2} |X_j|^2  \right) \right\},                             \\
        C_\Re & := \exp\left\{-\frac{1}{2} \left( \frac{|X_k|^2}{L_k^2} + \frac{|X_l|^2}{L_l^2} + \frac{|X_m|^2}{L_m^2} + \frac{|X_j|^2}{L_j^2} \right) \right\},                                                         \\
        C     & := \frac{A_k A_l A_m}{L_k L_l L_m} C_\Im C_\Re,                                                                                                                                                           \\
        \xi   & := -\left[ \beta_k + \beta_l - \beta_m - \beta_j + \frac{B_k}{2L_k^2} X_k + \frac{B_l}{2L_l^2} X_l - \frac{B_m}{2L_m^2} X_m - \frac{B_j}{2L_j^2} X_j \right],                                             \\
        z     & := \frac{1}{2} \left( \frac{1}{L_k^2} + \frac{1}{L_l^2} + \frac{1}{L_m^2} + \frac{1}{L_j^2} \right) + i \left( \frac{B_k}{4L_k^2} + \frac{B_l}{4L_l^2} - \frac{B_m}{4L_m^2} - \frac{B_j}{4L_j^2} \right), \\
        a     & := \frac{1}{L_k^2}
        X_k + \frac{1}{L_l^2} X_l + \frac{1}{L_m^2} X_m + \frac{1}{L_j^2} X_j.
    \end{aligned}
    \right.
\end{equation}
Those quantities obviously depend on the indices \( j, k, l \) and \( m \), but for clarity we do not write explicitly these dependences since they are pretty clear.
Then, for \( 1 \leq r \leq d \),
\begin{align*}
    \langle u|u|^2, b_{j, 1} \rangle   & = \sum_{k,l,m} C \widehat{f}(\xi)                                                                                \\
    \langle u|u|^2, b_{j, r+1} \rangle & = \sum_{k,l,n} \frac{C}{L_j} \left( \widehat{x_r f} - (X_j)_r \widehat{f} \right)                                \\
    \langle u|u|^2, b_{j, d+2} \rangle & = \sum_{k,l,m} \frac{C}{L_j^2} \left( \widehat{|x|^2 f} - 2X_j \cdot \widehat{xf} + |X_j|^2 \widehat{f} \right).
\end{align*}
We refer to Appendix \ref{appendix: populating the linear system for DFMP} for more details.
Moreover, Appendix \ref{appendix: fourier transforms of Gaussians} contains Table \ref{table: useful Fourier transforms} which gives useful Fourier transforms.

\begin{remark}[Computational complexity]
    Throughout this section, we have chosen
    \begin{equation*}
        v_j(s_j, y_j) = e^{-\frac{1}{2} | y_j |^2}.
    \end{equation*}
    This choice was made so that the inner products involved in the application of the DFMP are easily computable in an exact way.
    Therefore we do not rely on numerical integration to compute the coefficients of the linear system \eqref{eqn: linear system for DFMP}.
    In particular, this shows that the computational effort required to obtain the linear system is \( \mathcal{O}(N^4d + N^2(d+2)^2)\).
    To obtain the total complexity, we have to add the cost of computing the pseudo-inverse of the hermitian matrix \( \mathbf{A} \in \mathbb{C}^{(d+2)N, (d+2)N} \), which is \( O((d+2)^3 N^3)\).
    This yields the overall computational complexity: \( \mathcal{O}(N^4d + d^3 N^3)\).
    In a more general setting, one could use the Hermite basis decomposition \eqref{eqn: HO -- decomposition of v into hermite basis} and perform all computations exactly. This would yield more involved computations and we chose the easy way out by experimenting only with Gaussian functions, but this is completely doable.
\end{remark}

We obtain Algorithm \ref{algo: DFMP -- solve approximately cNLS} which can be used to obtain an approximate solution to \eqref{eqn: cNLS with psi} as a sum of bubbles, using the Strang splitting between the linear and nonlinear parts, and using an arbitrary explicit time-integrator for the nonlinear part.

\begin{algorithm}
    \caption{Approximating a solution to \eqref{eqn: cNLS with psi} as a sum of bubbles.}
    \label{algo: DFMP -- solve approximately cNLS}
    \begin{algorithmic}
        \For{ Each timestep of size \( dt \) }
        \For{ \( j = 1, \dots, N \) }
        \Comment{\(j\) denotes a bubble's index.}
        \State Use Algorithm \ref{algo: HO -- exact solve} to update the bubbles over a timestep of size \( dt/2 \).
        \For{each stage of a time-integrator}
        \State Compute the coefficients of the linear system \eqref{eqn: linear system for DFMP}.
        \State Solve the linear system \eqref{eqn: linear system for DFMP} to obtain \( \mathbf{E} \).
        \State Use \eqref{eqn: DFMP -- update of parameters with interactions -- wrt time t} to update the parameters over a timestep whose length depends on the stage of the time-integrator.
        \EndFor
        \State Use Algorithm \ref{algo: HO -- exact solve} to update the bubbles over a timestep of size \( dt/2 \).
        \EndFor
        \EndFor
    \end{algorithmic}
\end{algorithm}

% \begin{remark}
%     The error term for Algorithm \ref{algo: DFMP -- solve approximately cNLS} is of order 2 since we are using a Strang splitting.
%     We emphasize however that the \( \mathbb{L}^2 \) norm may be better preserved: indeed, Algorithm \ref{algo: DFMP -- solve approximately cNLS} is a composition of two sub-algorithms.
%     The first one is Algorithm \ref{algo: HO -- exact solve} and preserves exactly the \( \mathbb{L}^2 \) norm, and the second sub-algorithm preserves the \( \mathbb{L}^2 \) norm up to the time-discretization error (for the conservation of \( \mathbb{L}^2 \) norm by DFMP, see Section \ref{sect: Hamiltonian and norm conservation for the interactions}).
%     In our numerical examples we will use the 4-th order Runge-Kutta method, hence the \( \mathbb{L}^2 \) norm will be globally preserved up to some error term of order \( \Delta t^4 \).
% \end{remark}

\subsection{Hamiltonian and norm conservation for the interactions}
\label{sect: Hamiltonian and norm conservation for the interactions}

When solving \eqref{eqn: cNLS with psi -- nonlinear part} via the DFMP, i.e. when solving the linear system \eqref{eqn: linear system for DFMP}, a Hamiltonian is conserved.
\begin{lemma}
    Let \( u(t) \) be the approximation to \eqref{eqn: cNLS with psi -- nonlinear part} obtained by applying the Dirac-Frenkel-MacLachlan principle, and define
    \begin{equation*}
        H_{\textnormal{interactions}}(t) := \frac{1}{4} \langle u(t), u(t)| u(t)|^2 \rangle = \frac{1}{4} \langle u(t)^2, u(t)^2 \rangle .
    \end{equation*}
    Then \( H_{\textnormal{interactions}} \) is conserved, i.e.
    \begin{equation*}
        \frac{\textnormal{d}}{\textnormal{d}t}
        H_{\textnormal{interactions}}(t) = 0 ,
    \end{equation*}
    and the \( \mathbb{L}^2 \) norm of \( u \) is also conserved.
\end{lemma}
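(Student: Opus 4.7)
The plan is to exploit the defining orthogonality condition of the DFMP by choosing judicious test functions in the tangent space. The key structural observation is that the manifold $\mathcal{M}$ defined in \eqref{eqn: manifold for the dirac-frenkel-maclachlan principle} is invariant under multiplication by any complex scalar: scaling $A_j$ by a positive real and shifting $\gamma_j$ by an arbitrary phase keeps the bubble decomposition inside $\mathcal{M}$. Consequently, $\mathcal{T}_{u(t)}\mathcal{M}$ is $\mathbb{C}$-linear and, crucially, it contains both $u(t)$ itself (generating the real dilation direction via $A_j$) and $\partial_t u(t)$ (by definition of the DFMP). Once this is noted, both conservation laws follow from plugging these specific test functions into the orthogonality relation
\begin{equation*}
    \langle f, i\partial_t u - |u|^2 u \rangle = 0, \quad \forall f \in \mathcal{T}_{u(t)}\mathcal{M}.
\end{equation*}

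For the Hamiltonian $H_{\textnormal{interactions}}$, I would choose $f = \partial_t u \in \mathcal{T}_{u(t)}\mathcal{M}$. Using the convention $\langle f, g \rangle = \int f \bar{g}$, the left-hand side becomes
\begin{equation*}
    \langle \partial_t u, i\partial_t u \rangle = -i \int |\partial_t u|^2 = -i \|\partial_t u\|_{\mathbb{L}^2}^2,
\end{equation*}
which is purely imaginary. The DFMP thus forces $\langle \partial_t u, |u|^2 u \rangle$ to be purely imaginary as well. On the other hand, a direct differentiation gives
\begin{equation*}
    \frac{\textnormal{d}}{\textnormal{d}t} H_{\textnormal{interactions}}(t) = \frac{1}{4}\frac{\textnormal{d}}{\textnormal{d}t}\int |u|^4 = \Re \int |u|^2 \bar{u} \, \partial_t u = \Re \langle \partial_t u, |u|^2 u \rangle,
\end{equation*}
and the right-hand side vanishes by the previous observation.

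For mass conservation I would instead pick $f = u \in \mathcal{T}_{u(t)}\mathcal{M}$. Then
\begin{equation*}
    \langle u, i\partial_t u \rangle = -i \langle u, \partial_t u \rangle, \qquad \langle u, |u|^2 u \rangle = \int |u|^4 \in \mathbb{R},
\end{equation*}
so the DFMP yields $\langle u, \partial_t u \rangle = i \|u\|_{\mathbb{L}^4}^4$, which is purely imaginary. Taking real parts in $\frac{\textnormal{d}}{\textnormal{d}t}\|u\|_{\mathbb{L}^2}^2 = 2 \Re \langle \partial_t u, u \rangle = 2 \Re \overline{\langle u, \partial_t u \rangle}$ then gives $0$.

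There is no genuine obstacle here: the only point that warrants care is verifying that $u$ and $\partial_t u$ really lie in $\mathcal{T}_{u(t)}\mathcal{M}$, which is why it is useful to record beforehand that $\mathcal{M}$ is a complex cone (the basis \eqref{eqn: base of Tu M in DFMP} contains the element $b_{j,1}$ spanning the $A_j$-direction, and the $\mathbb{C}$-linearity of the tangent space has already been noted in the remark following \eqref{eqn: definition of u(t) in DFMP}). Everything else reduces to the elementary algebraic identity that $\langle \varphi, i\varphi\rangle$ is purely imaginary, combined with the DFMP orthogonality condition.
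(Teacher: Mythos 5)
Your proposal is correct and follows essentially the same route as the paper: test the DFMP orthogonality relation with $f = \partial_t u(t)$ to see that $\langle \partial_t u, u|u|^2\rangle$ is purely imaginary (conserving $H_{\textnormal{interactions}}$), and with $f = u(t)$ to see that $\langle u, \partial_t u\rangle$ is purely imaginary (conserving the $\mathbb{L}^2$ norm). Your explicit justification that $u(t)$ and $\partial_t u(t)$ lie in $\mathcal{T}_{u(t)}\mathcal{M}$ via the cone structure of $\mathcal{M}$ is a point the paper only asserts, but it does not change the argument.
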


\begin{proof}
    We have
    \begin{equation*}
        H_{\textnormal{interactions}}(t) := \frac{1}{4} \langle u(t), u(t)| u(t)|^2 \rangle = \frac{1}{4} \langle u(t)^2, u(t)^2 \rangle ,
    \end{equation*}
    by using the Hermitian property of the inner product \( \langle \cdot, \cdot \rangle \).
    Then,
    \begin{align*}
        \frac{\textnormal{d}}{\textnormal{d}t} H_{\textnormal{interactions}}(t)
         & = \frac{1}{4} \frac{\textnormal{d}}{\textnormal{d}t} \langle u(t)^2, u(t)^2 \rangle                                                                 \\
         & = \frac{1}{4} \left\langle 2u(t) \partial_{t} u(t) , u(t)^2 \right\rangle + \frac{1}{4}  \left\langle u(t)^2, 2u(t) \partial_{t} u(t) \right\rangle \\
         & = \Re \left\langle u(t) \partial_{t} u(t) , u(t)^2 \right\rangle                                                                                    \\
         & = \Re \left\langle \partial_{t} u(t) , u(t) |u(t)|^2 \right\rangle.
    \end{align*}
    By definition of \( \partial_{t} u(t) \), we have \( \partial_{t} u(t) \in \mathcal{T}_{u(t)} \mathcal{M} \), hence we can take \( f = \partial_{t} u(t) \) in \eqref{eqn: definition of u(t) in DFMP}.
    We obtain the following equality:
    \begin{equation*}
        \langle \partial_{t} u(t), u(t) |u(t)|^2 \rangle = \langle \partial_{t} u(t), i \partial_{t} u(t) \rangle = -i \| \partial_{t} u(t) \|^2
        .
    \end{equation*}
    Therefore,
    \begin{equation*}
        \frac{\textnormal{d}}{\textnormal{d}t} H_{\textnormal{interactions}}(t) = \Re \left( -i \| \partial_{t} u(t) \|^2 \right) = 0
        .
    \end{equation*}

    Using similar ideas, we can easily show the conservation of the \( \mathbb{L}^2 \) norm: we obviously have \( u(t) \in \mathcal{T}_{u(t)} \mathcal{M} \), hence
    \begin{align*}
        \frac{\textd}{\textd t} \|u(t)\|^2
         & = 2\Re\langle u(t), \partial_{t} u(t) \rangle = 2 \Re \langle u(t), -i u(t)|u(t)|^2 \rangle \\
         & = 2 \Re \left( i \langle |u(t)|^2, |u(t)|^2 \rangle\right) = 0
    \end{align*}
\end{proof}

\subsection{Recovering the Harmonic Oscillator equations}

Suppose the family \( B_{u(t)} \subset \mathbb{L}^2( \mathbb{R}^d ) \) defined by \eqref{eqn: base of Tu M in DFMP} is linearly independent, and consider the equation \eqref{eqn: cNLS with psi -- linear part}.
By summing equation \eqref{eqn: HO -- idt u - Hu -- with v} over \( j=1, \dots, N \) with \( v_j(s_j,y_j) = e^{-\frac{|y_j|^2}{2}} \), and letting this sum be equal to zero, we obtain an equation of the form
\begin{equation}
    \label{eqn: recovering the HO eqns -- write HO with generic coefficients in basis B}
    \sum_{j=1}^N \left( c_{j, 1} b_{j, 1} + c_{j, 2} b_{j, 2} + \dots + c_{j, d+1} b_{j, d+1} + c_{j, d+2} b_{j, d+2} \right) = 0
    .
\end{equation}
Thanks to the assumption that \( B_{u(t)} \) is a linearly independent family, we know that we must have
\begin{equation}
    \label{eqn: recovering HO equations from DFMP -- all coeffs equal to zero}
    c_{k, 1} = c_{k, 2} = \dots = c_{k, d+1} = c_{k, d+2} = 0,\qquad k=1,\dots, (d+2)N
    .
\end{equation}
This yields exactly the system of equations \eqref{eqn: modulation ODEs -- linear part wrt time s - with v}.
In other words, the DFMP approach gives the same equations as those given in Section \ref{sect: HO -- Hamiltonian formulation for the free bubble} when \( B_{u(t)} \) is a linearly independent family.
However, our approach as described in Section \ref{sect: HO -- Hamiltonian formulation for the free bubble} allows to solve them exactly and not only numerically with some numerical time-integrator.

Finally, if the family \( B_{u(t)} \) is linearly dependent, then we cannot write equation \eqref{eqn: recovering HO equations from DFMP -- all coeffs equal to zero} anymore, hence the DFMP approach in the linear case fails.
Our approach avoids this issue by naturally imposing conditions \eqref{eqn: recovering HO equations from DFMP -- all coeffs equal to zero} (which are the same as \eqref{diffsys}).

%%%%%%%%%%%%%%%%%%%%%%%%%%%%%%%%%%%%%%%%%%%%
%%%%%%%%%%%%%%%%%%%%%%%%%%%%%%%%%%%%%%%%%%%%
%%%%%%%%%%%%%%%%%%%%%%%%%%%%%%%%%%%%%%%%%%%%
%%%%%%%%%%%%%%%%%%%%%%%%%%%%%%%%%%%%%%%%%%%%
%%%%%%%%%%%%%%%%%%%%%%%%%%%%%%%%%%%%%%%%%%%%

\section{Numerical examples}
\label{sect: numerical experiments}

In this section we will assess the efficiency of the Bubbles approach with the Dirac-Frenkel-MacLachlan approach against a spectral method in the two-dimensional case.

\subsection{Spectral scheme}

We start by discussing the spectral method we shall use to compare with the results of Algorithm \ref{algo: DFMP -- solve approximately cNLS}.
We refer to \cite{fornbergPracticalGuidePseudospectral1996} for a general introduction to spectral methods for the Schr{\"o}dinger equation, and to \cite{antoineComputationalMethodsDynamics2013} for grid-based schemes applied to the Gross-Pitaevskii equation.

We now present a method which can be understood as the application of \cite{bernierExactSplittingMethods2021} to a simpler equation, namely the Harmonic Oscillator.
We use a splitting method to simulate the linear part \eqref{eqn: cNLS with psi -- linear part}, and thanks to \cite{bernierExactSplittingMethods2020,alphonsePolarDecompositionSemigroups2021} we have:
\begin{align}
    e^{-it (-\Delta + |x|^2)}
     & = e^{- \frac{1}{2} \tanh(it) |x|^2} e^{\frac{1}{2} \sinh(2it)\Delta_x} e^{- \frac{1}{2} \tanh(it) |x|^2} \nonumber \\
     & = e^{- \frac{i}{2} \tan(t) |x|^2} e^{\frac{i}{2} \sin(2t)\Delta_x} e^{- \frac{i}{2} \tan(t) |x|^2}
    \label{eqn: num -- exact time splitting of HO}
    .
\end{align}
We can cite \cite{jinMathematicalComputationalMethods2011} which also presents a spectral method based on the Fourier transform with time splitting, however the above method is different in that \eqref{eqn: num -- exact time splitting of HO} is exact and hence we do not have any time-splitting error.

The first and third exponentials on the RHS are straightforward to compute on a grid.
For the second one, we use a Fourier transform: \( e^{\frac{i}{2} \sin(2t)\Delta_x} \) is the propagator of the following equation:
\begin{equation*}
    \partial_{t} \psi = i\cos(2t) \Delta_x \psi
    .
\end{equation*}
By using a Fourier transform, we get
\begin{equation*}
    \partial_{t} \mathcal{F}(\psi)(\xi)
    = i\cos(2t) \mathcal{F} \left( \Delta_x \psi\right)(\xi)
    = -i\cos(2t) |\xi|^2 \mathcal{F} \left( \psi \right)(\xi)
    .
\end{equation*}
Hence,
\begin{equation*}
    \mathcal{F}(\psi(t, \cdot))(\xi) = e^{-\frac{i}{2} \sin(2t) |\xi|^2} \mathcal{F}(\psi(0, \cdot))(\xi)
    .
\end{equation*}
The RHS exponential is straightforward to compute in the Fourier space.
Hence, an exact-time spectral approximation of the solution to \eqref{eqn: cNLS with psi -- linear part} is given by Algorithm \ref{algo: num -- spectral solver linear part}.
From this, it is easy to obtain an algorithm which simulates \eqref{eqn: cNLS with psi} with interactions.
It consists in using a Strang splitting method on \eqref{eqn: cNLS with psi}, by splitting the linear part \eqref{eqn: cNLS with psi -- linear part} and the nonlinear part \eqref{eqn: cNLS with psi -- nonlinear part}.
The linear part is approximated via Algorithm \ref{algo: num -- spectral solver linear part}, and the computation of interactions is explicit thanks to the fact that \( |u(t, x)|^2 \) does not depend on time (see e.g. \cite[Sect.~2.2]{faouGeometricNumericalIntegration2012}).
This fully describes Algorithm \ref{algo: num -- spectral solver}.

\begin{algorithm}
    \caption{Spectral solver for \eqref{eqn: cNLS with psi -- linear part}, with an exact time resolution for each splitting step.}
    \label{algo: num -- spectral solver linear part}
    \begin{algorithmic}
        \State Discretize the initial data \( \eta \) on a \( \text{Grid} \subset \mathbb{R}^d \).
        \For{ Each timestep of size \( \Delta t \) }
        \For{ \( x \in \textsc{Grid} \) }
        \Comment{\(x \in \mathbb{R}^d\).}
        \State Multiply \( \eta(x) \) by \( e^{- \frac{i}{2} \tan(\Delta t) |x|^2} \).
        \EndFor
        \State Apply a FFT to \( \eta \).
        \Comment{FFT: Fast Fourier Transform.}
        \For{ \( \xi \in \textsc{Fourier Grid} \) }
        \Comment{\( \xi \in \mathbb{R}^d\).}
        \State Multiply \( \mathcal{F}(\eta)(\xi) \) by \( e^{-\frac{i}{2} \sin(2\Delta t) |\xi|^2} \).
        \EndFor
        \State Apply an inverse FFT to \( \mathcal{F}(\eta) \).
        \For{ \( x \in \textsc{Grid} \) }
        \State Multiply \( \eta(x) \) by \( e^{- \frac{i}{2} \tan(\Delta t) |x|^2} \).
        \EndFor
        \EndFor
    \end{algorithmic}
\end{algorithm}

\begin{algorithm}
    \caption{Spectral solver for \eqref{eqn: cNLS with psi}, with a Strang Splitting method.}
    \label{algo: num -- spectral solver}
    \begin{algorithmic}
        \State Discretize the initial data \( \eta \) on a \( \text{Grid} \subset \mathbb{R}^d \).
        \For{ Each timestep of size \( \Delta t \) }
        \State Use Algorithm \ref{algo: num -- spectral solver linear part} with a stepsize \( \Delta t/2 \).
        \For{ \( x \in \textsc{Grid} \) }
        \Comment{Add interactions.}
        \State Multiply \( \eta(x) \) by \( e^{-i\,\Delta t\, |\eta(x)|^2} \).
        \EndFor
        \State Use Algorithm \ref{algo: num -- spectral solver linear part} with a stepsize \( \Delta t/2 \).
        \EndFor
    \end{algorithmic}
\end{algorithm}

Of course, in pratical applications one is not able to define a grid over \( \mathbb{R}^d \).
Hence, Algorithms \ref{algo: num -- spectral solver linear part} and \ref{algo: num -- spectral solver} have to be modified by defining \textsc{Grid} as a discretization of a finite-volume subset of \( \mathbb{R}^d \), typically a product of intervals in each dimension.
For all of our numerical examples, this will \( [-15, 15]\times[-15, 15] \), discretized using \( N_x\times N_y \) points.
In order to have an easily computable FFT, one has to use a spatial uniform grid, which then defines the \textsc{Fourier Grid}.
Special care has to be paid when choosing the number of points: if we have Fourier frequencies larger than the \emph{Nyquist frequency}, then we will observe a phenomenon known as \emph{aliasing}.
This may not be problematic for the Harmonic Oscillator \eqref{eqn: cNLS with psi -- linear part} depending on the initial condition, but will eventually become an issue when simulating \eqref{eqn: cNLS with psi} because it involves interactions and hence an infinite number of frequencies.
Moreover, by using a FFT-based algorithm we implicitly impose periodic boundary conditions.

\subsection{Discretization into a sum of Bubbles}

We need to decompose any arbitrary function into a finite sum of \( N \) bubbles.
A solution to this question has been proposed in \cite{qianFastGaussianWavepacket2010}, but it involves integrals over the whole phase space, which is something we want to avoid.

We could also use a nonlinear least squares approach, but our experimental results showed that it tends to yield spread out Gaussians, which may present huge overlaps between them.
The overlaps cause issues with the DFMP, for instance a blow-up of the conservative quantities.
This has been observed during our experiments but the results are not reported in this paper.
The issue of discretizing an arbitrary function into a sum of bubbles without too much overlap is not the main concern of this paper, hence we will use a visual trial-and-error discretization.
Another possible way of discretizing the initial data is outlined in \cite{adamowiczLaserinducedDynamicAlignment2022}.
Finally, if we do not restrict ourselves to Gaussian functions and allow general Hermite functions, then the discretization simply consists in projecting the initial condition onto this basis, and truncating the highest modes if necessary.

\subsection{Observables}

In order to compare the bubbles scheme against the spectral method, we compare them in absence of interactions, i.e. on the Harmonic Oscillator \eqref{eqn: cNLS with psi -- linear part}, as well as in the presence of nonlinear interactions, i.e. on \eqref{eqn: cNLS with psi}.
We showed in Lemma \ref{lemma: conserved quantities in HO} the conservation of some quantities for \eqref{eqn: cNLS with psi -- linear part} and \eqref{eqn: cNLS with psi}, we will focus on mass, energy and momentum.
When computing the observables for the spectral solution, we noted that the approximation of the gradient using finite differences with periodic boundary conditions yielded very rough results while the gradient approximation using the Fast Fourier Transform gave more accurate results.
We use the latter approximation in the Figures of Section \ref{subsect: num results}.
In the case of bubbles, we compute every integral by hand thanks to the assumption \( v(s,y) = e^{-\frac{|y|^2}{2}} \), some details are given in Appendix \ref{appendix: Miscellaneous computations}.
When reporting the results in the following \( \log \)-plots, all values with an amplitude smaller than \( 10^{-16} \) were set to be equal to \( 10^{-16} \).

For all of the results shown, the spectral scheme is supplied with the exact initial condition and not a projection on the grid of the bubbles discretization.

\subsection{Results}
\label{subsect: num results}

We consider examples adapted from \cite{baoNumericalSolutionGross2003}.

\subsubsection{Test case 1: Zero phase initial data}

The initial condition reads
\begin{equation}
    \psi(t=0, x) = \pi e^{-\frac{|x-\mu_1|^2}{2} } + 2e^{-\frac{|x-\mu_2|^2}{2} }, \quad x\in \mathbb{R}^2,\quad \mu_1 = (0, 2), \ \mu_2 = (1, 0)
    .
\end{equation}

\begin{figure}
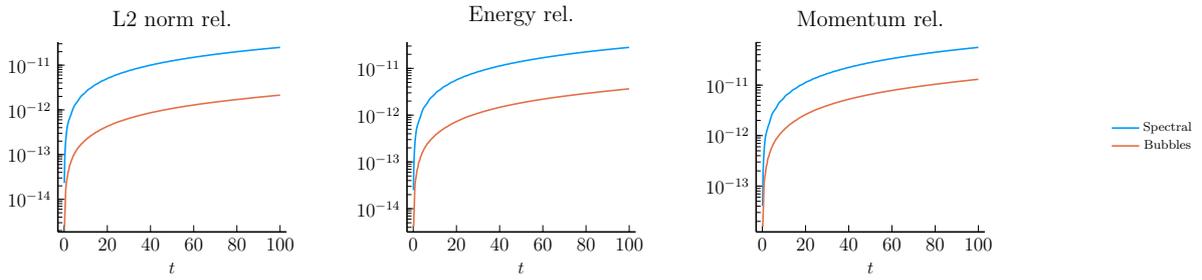
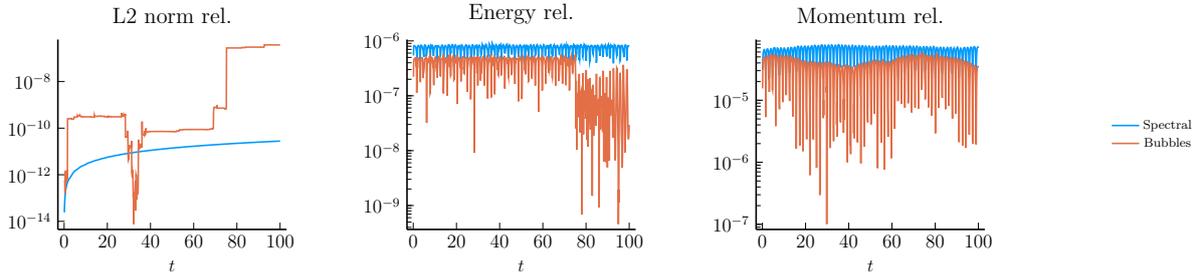

    \begin{subfigure}{\textwidth}
        \centering
        % 
	% First argument is the width as a proportion of \textwidth
	% Second argument is the filename
    \tikzsetnextfilename{bubblesVSspectral_conservative_quantities_dt-0.001_coeffs-1.0-0.0_testcase1_Nx128_Ny129}%
    \ifcustomcompileusingtikzexternalize
	\resizebox{1\textwidth}{!}{\input{\customTikzInputFolderbubblesVSspectral_conservative_quantities_dt-0.001_coeffs-1.0-0.0_testcase1_Nx128_Ny129.tex}}%
	\else
	\includegraphics[width=1\textwidth]{\customTikzOutputFolderbubblesVSspectral_conservative_quantities_dt-0.001_coeffs-1.0-0.0_testcase1_Nx128_Ny129.pdf}
	\fi

        \caption{\centering Approximate solution to the Harmonic Oscillator \eqref{eqn: cNLS with psi -- linear part}.}
    \end{subfigure}
    \begin{subfigure}{\textwidth}
        \centering
        % 
	% First argument is the width as a proportion of \textwidth
	% Second argument is the filename
    \tikzsetnextfilename{bubblesVSspectral_conservative_quantities_dt-0.001_coeffs-1.0-1.0_testcase1_Nx128_Ny129}%
    \ifcustomcompileusingtikzexternalize
	\resizebox{1\textwidth}{!}{\input{\customTikzInputFolderbubblesVSspectral_conservative_quantities_dt-0.001_coeffs-1.0-1.0_testcase1_Nx128_Ny129.tex}}%
	\else
	\includegraphics[width=1\textwidth]{\customTikzOutputFolderbubblesVSspectral_conservative_quantities_dt-0.001_coeffs-1.0-1.0_testcase1_Nx128_Ny129.pdf}
	\fi

        \caption{\centering Approximate solution to the Schrödinger equation \eqref{eqn: cNLS with psi} using DFMP Algorithm.}
    \end{subfigure}
    \caption{\centering Test case 1.
        Relative evolution of mass, energy and momentum with bubbles and spectral methods.
        \( \Delta t = 10^{-3} \).
        Time-integrator for the nonlinear part of the splitting: Runge-Kutta of order 4.
        Spectral scheme with \( N_x = 128, N_y = 129 \).
    }
    \label{fig: num -- test case 1}
\end{figure}

% \begin{figure}
%     \centering
%     \inputtikz{0.4}{courbe_cv_Nmin=16_Nmax=2048_testcase=1_HO=false_T=1.0_dt=0.005}
%     \caption{\centering Test case 1. Evolution of the \( L^2 \) norm of the difference between spectral and bubble schemes against the number of points in each direction \( N_x = N_y \), at time \( T = 1 \) with \( dt=5\cdot 10^{-3} \), relative w.r.t. the exact \( L^2 \) norm of the discretized initial data.}
%     \label{fig: num -- test case 1 - evolution of relative l2 norm}
% \end{figure}

The results are displayed in Figure \ref{fig: num -- test case 1}.
The solution approximated with the DFMP approach globally outperforms the spectral method on both the Harmonic Oscillator and the cubic NonLinear Schrödinger equations.
On the Harmonic Oscillator, the solution obtained with the Bubbles scheme is about one order of magnitude better than the spectral scheme.
When we compare them on \eqref{eqn: cNLS with psi}, i.e. when adding interactions, the \( \mathbb{L}^2 \) norm is better conserved for the spectral scheme, but the other conservative quantities are better conserved on a long time for the Bubbles scheme.

The ``jumps'' in the DFMP approach may be explained by an ill-conditioned Gram matrix, which would then yield a very rough approximation of the modulation parameters.

\subsubsection{Test case 2: Weak interactions}

The initial condition reads
\begin{equation}
    \psi(t=0, x) = e^{-|x - \mu_3|^2} e^{i \cosh |x - \mu_3|}, \quad x\in \mathbb{R}^2, \quad \mu_3 = (1, 1)
    .
\end{equation}

The approximation of this function as a sum of bubbles is pretty straightforward: we know that for \( x \) small, \( \cosh x \approx 1 + \frac{x^2}{2}  \), hence
\begin{equation*}
    \psi(t=0, x) \approx e^{-|x - \mu_3|^2} e^{i + i\frac{|x - \mu_3|^2}{2} }, \quad x\in \mathbb{R}^2
    .
\end{equation*}

\begin{figure}
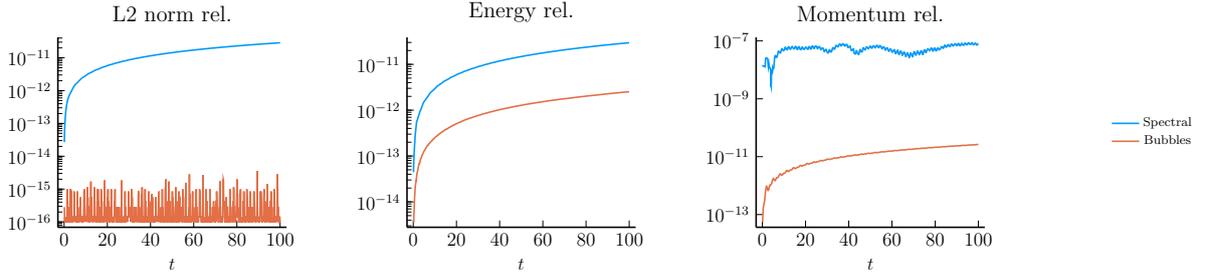
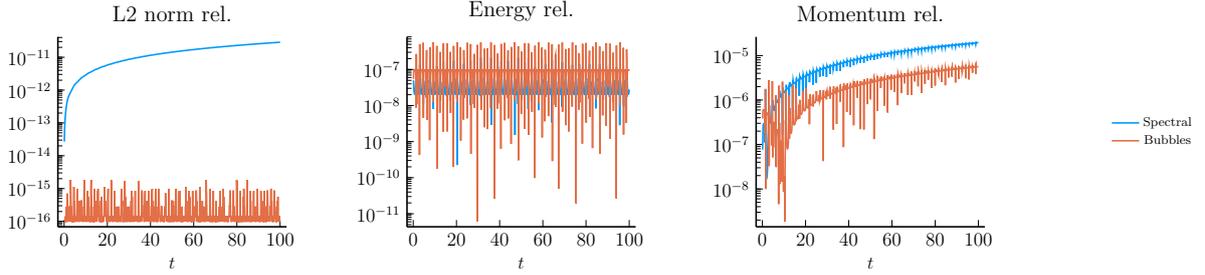

    \begin{subfigure}{\textwidth}
        \centering
        % 
	% First argument is the width as a proportion of \textwidth
	% Second argument is the filename
    \tikzsetnextfilename{bubblesVSspectral_conservative_quantities_dt-0.001_coeffs-1.0-0.0_testcase2_Nx128_Ny129}%
    \ifcustomcompileusingtikzexternalize
	\resizebox{1\textwidth}{!}{\input{\customTikzInputFolderbubblesVSspectral_conservative_quantities_dt-0.001_coeffs-1.0-0.0_testcase2_Nx128_Ny129.tex}}%
	\else
	\includegraphics[width=1\textwidth]{\customTikzOutputFolderbubblesVSspectral_conservative_quantities_dt-0.001_coeffs-1.0-0.0_testcase2_Nx128_Ny129.pdf}
	\fi

        \caption{\centering Approximate solution to the Harmonic Oscillator \eqref{eqn: cNLS with psi -- linear part}.}
    \end{subfigure}
    \begin{subfigure}{\textwidth}
        \centering
        % 
	% First argument is the width as a proportion of \textwidth
	% Second argument is the filename
    \tikzsetnextfilename{bubblesVSspectral_conservative_quantities_dt-0.001_coeffs-1.0-1.0_testcase2_Nx128_Ny129}%
    \ifcustomcompileusingtikzexternalize
	\resizebox{1\textwidth}{!}{\input{\customTikzInputFolderbubblesVSspectral_conservative_quantities_dt-0.001_coeffs-1.0-1.0_testcase2_Nx128_Ny129.tex}}%
	\else
	\includegraphics[width=1\textwidth]{\customTikzOutputFolderbubblesVSspectral_conservative_quantities_dt-0.001_coeffs-1.0-1.0_testcase2_Nx128_Ny129.pdf}
	\fi

        \caption{\centering Approximate solution to the Schrödinger equation \eqref{eqn: cNLS with psi} using DFMP Algorithm.}
    \end{subfigure}
    \caption{\centering Test case 2.
        Relative evolution of mass, energy and momentum with bubbles and spectral methods.
        \( \Delta t = 10^{-3} \).
        Time-integrator for the nonlinear part of the splitting: Runge-Kutta of order 4.
        Spectral scheme with \( N_x = 128, N_y = 129 \).
    }
    \label{fig: num -- test case 2}
\end{figure}

% \begin{figure}
%     \centering
%     \inputtikz{0.4}{courbe_cv_Nmin=16_Nmax=2048_testcase=2_HO=false_T=1.0_dt=0.005}
%     \caption{\centering Test case 2. Evolution of the \( L^2 \) norm of the difference between spectral and bubble schemes against the number of points in each direction \( N_x = N_y \), at time \( T = 1 \) with \( dt=5\cdot 10^{-3} \), relative w.r.t. the exact \( L^2 \) norm of the discretized initial data.}
%     \label{fig: num -- test case 2 - evolution of relative l2 norm}
% \end{figure}

The results are displayed in Figure \ref{fig: num -- test case 2}.
This example shows the performance of the DFMP approach in its most efficient setting: it only has one bubble.
This explains the very good conservation results obtained: the Bubbles scheme outperforms the spectral scheme on both \eqref{eqn: cNLS with psi -- linear part} and \eqref{eqn: cNLS with psi}, except for the energy on \eqref{eqn: cNLS with psi}.
However, even in this case, the error of the DFMP method remains generally less than one order of magnitude larger than the error from the spectral method.

\subsubsection{Test case 3: Strong interactions}

The initial condition reads
\begin{equation}
    \psi(t=0, x) =
    \begin{cases}
        \sqrt{M^2 - |x|^2} e^{i\cosh \sqrt{x_1^2 + x_2^2}}, & |x|^2 < M^2      \\
        0                                                   & \text{otherwise}
    \end{cases}
    , \quad M = 4.
\end{equation}

We apply the same approximation for the complex exponential as previously explained, and use a ``visual trial-and-error'' discretization of the square root.
It yields a number of \( N=9 \) bubbles.
We emphasize the fact that this discretization may be far from being the best one achievable, however the process of discretizing an arbitrary function into a sum of bubbles is not the main concern of this paper.
The discretization of the initial square root is given in Figure \ref{fig: num -- approximation of sqrt with bubbles}.

\begin{figure}
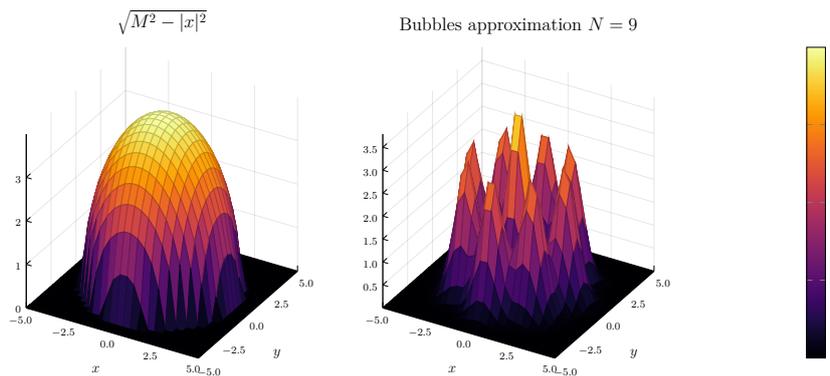

    \centering
    % 
	% First argument is the width as a proportion of \textwidth
	% Second argument is the filename
    \tikzsetnextfilename{approx_sqroot_with_bubbles}%
    \ifcustomcompileusingtikzexternalize
	\resizebox{0.7\textwidth}{!}{\input{\customTikzInputFolderapprox_sqroot_with_bubbles.tex}}%
	\else
	\includegraphics[width=0.7\textwidth]{\customTikzOutputFolderapprox_sqroot_with_bubbles.pdf}
	\fi

    \caption{\centering Approximation of \( x\mapsto \sqrt{M^2 - |x|^2} \) as a sum of bubbles}
    \label{fig: num -- approximation of sqrt with bubbles}
\end{figure}

\begin{figure}
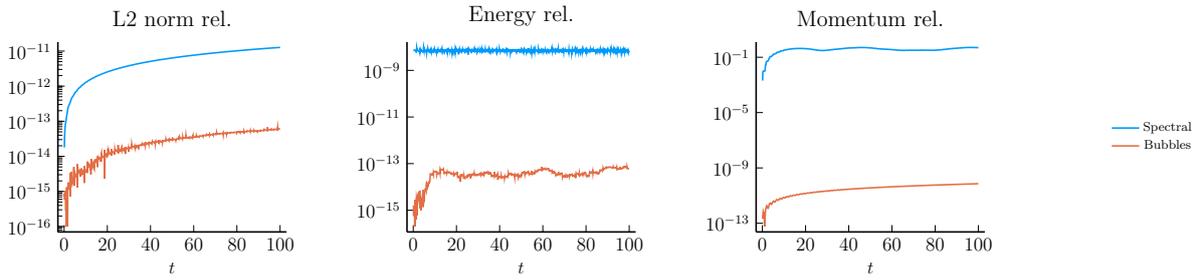
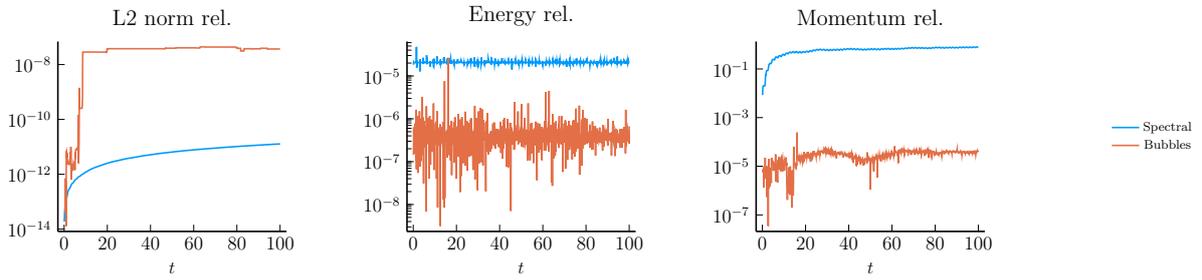

    \begin{subfigure}{\textwidth}
        \centering
        % 
	% First argument is the width as a proportion of \textwidth
	% Second argument is the filename
    \tikzsetnextfilename{bubblesVSspectral_conservative_quantities_dt-0.001_coeffs-1.0-0.0_testcase3_Nx128_Ny129}%
    \ifcustomcompileusingtikzexternalize
	\resizebox{1\textwidth}{!}{\input{\customTikzInputFolderbubblesVSspectral_conservative_quantities_dt-0.001_coeffs-1.0-0.0_testcase3_Nx128_Ny129.tex}}%
	\else
	\includegraphics[width=1\textwidth]{\customTikzOutputFolderbubblesVSspectral_conservative_quantities_dt-0.001_coeffs-1.0-0.0_testcase3_Nx128_Ny129.pdf}
	\fi

        \caption{\centering Approximate solution to the Harmonic Oscillator \eqref{eqn: cNLS with psi -- linear part}.}
    \end{subfigure}
    \begin{subfigure}{\textwidth}
        \centering
        % 
	% First argument is the width as a proportion of \textwidth
	% Second argument is the filename
    \tikzsetnextfilename{bubblesVSspectral_conservative_quantities_dt-0.001_coeffs-1.0-1.0_testcase3_Nx128_Ny129}%
    \ifcustomcompileusingtikzexternalize
	\resizebox{1\textwidth}{!}{\input{\customTikzInputFolderbubblesVSspectral_conservative_quantities_dt-0.001_coeffs-1.0-1.0_testcase3_Nx128_Ny129.tex}}%
	\else
	\includegraphics[width=1\textwidth]{\customTikzOutputFolderbubblesVSspectral_conservative_quantities_dt-0.001_coeffs-1.0-1.0_testcase3_Nx128_Ny129.pdf}
	\fi

        \caption{\centering Approximate solution to the Schrödinger equation \eqref{eqn: cNLS with psi} using DFMP Algorithm.}
    \end{subfigure}
    \caption{\centering Test case 3.
        Relative evolution of mass, energy and momentum with bubbles and spectral methods.
        \( \Delta t = 10^{-3} \).
        Time-integrator for the nonlinear part of the splitting: Runge-Kutta of order 4.
        Spectral scheme with \( N_x = 128, N_y = 129 \).
    }
    \label{fig: num -- test case 3}
\end{figure}

% \begin{figure}
%     \centering
%     \inputtikz{0.4}{courbe_cv_Nmin=16_Nmax=2048_testcase=3_HO=false_T=1.0_dt=0.005}
%     \caption{\centering Test case 3. Evolution of the \( L^2 \) norm of the difference between spectral and bubble schemes against the number of points in each direction \( N_x = N_y \), at time \( T = 1 \) with \( dt=5\cdot 10^{-3} \), relative w.r.t. the exact \( L^2 \) norm of the discretized initial data.}
%     \label{fig: num -- test case 3 - evolution of relative l2 norm}
% \end{figure}

The results are displayed in Figure \ref{fig: num -- test case 3}.
This example is by far the most interesting of the three test cases presented in this paper, because it shows that with the discretization given in Figure \ref{fig: num -- approximation of sqrt with bubbles} the conservation properties are pretty good for the Bubbles scheme, even when there are a lot of interactions between bubbles.
The spectral scheme is globally outperformed by the Bubbles scheme, except for the \( \mathbb{L}^2 \) norm in the presence of interactions, which is better conserved by the spectral scheme.
Even in this case, the conservation of this quantity with the DFMP is relatively correct.

The ``jumps'' in the relative evolution of conservative quantities may be explained by an ill-conditioned Gram matrix in DFMP.
It also has to be noted that if the discretization presents too much overlap between the Gaussian functions, then the DFMP approach fails and the conservative quantities blow up: this has been observed with other discretizations of the same initial data, and is not reported here.

%%%%%%%%%%%%%%%%%%%%%%%%%%%%%%%%%%%%%%%%%%%%
%%%%%%%%%%%%%%%%%%%%%%%%%%%%%%%%%%%%%%%%%%%%
%%%%%%%%%%%%%%%%%%%%%%%%%%%%%%%%%%%%%%%%%%%%
%%%%%%%%%%%%%%%%%%%%%%%%%%%%%%%%%%%%%%%%%%%%
%%%%%%%%%%%%%%%%%%%%%%%%%%%%%%%%%%%%%%%%%%%%

\section{Conclusion}

We presented in this work an approach based on recent results from \cite{faouWeaklyTurbulentSolutions2020}.
It allows to solve exactly the Harmonic Oscillator \eqref{eqn: cNLS with psi -- linear part} for initial functions that can be represented as a sum of modulated functions (the \emph{bubbles}), for a certain kind of modulation.

In this context we focused on a particular subclass of such functions, modulated Hermite functions, which have the advantage of allowing explicit computations.
This is particularly interesting since we do not have to rely on any sort of discretization of the phase space, which is usually the main computational burden in numerical simulations.
We obtain an algorithm which yields an exact solution as soon as the initial data is a sum of modulated Hermite functions.
If we consider an arbitrary initial function, it suffices to project it into onto the Hermite basis and to perform analytical time-evolution.
Moreover, the algorithm only relies on a small number of parameters whose time-evolution is explicit, making it very fast and computationally efficient.

We also extended the results from \cite{faouWeaklyTurbulentSolutions2020} by allowing cubic interactions, at the cost of approximating the solution to \eqref{eqn: cNLS with psi -- nonlinear part} via the Dirac-Frenkel-MacLachlan principle.
We then only considered modulated Gaussian functions, because they allowed us to easily perform explicit computations and to obtain a numerical algorithm whose computational complexity is \( \mathcal{O}(N^4d + N^3 d^3) \).
Here \( d \) is the dimension and \( N \) is the number of bubbles.
The most critical parameter is \( N \), which corresponds roughly to the precision of the discretization when considering arbitrary initial data.
For any given function, the higher \( N \), the better we can approximate it as a sum of modulated Gaussian functions.
We then have a clear trade-off between the speed of the algorithm and the precision of the discretization.
The bubble algorithm globally outperforms on the numerical test cases a spectral method combined with time splitting, where each splitting step is solved exactly.

As a final note, any grid-based method is inherently bound to a finite subset of \( \mathbb{R}^d \) to which we have to add boundary conditions, while the bubble approach does not have such restrictions. We emphasize the fact that the algorithm presented extends in a straightforward manner when dealing with complex modulated Hermite functions.

\section*{Acknowledgements}
\addcontentsline{toc}{section}{Acknowledgements}
The authors would like to thank J.
Bernier for spotting an abnormal behavior in the spectral method and for his help in fixing it.
EF and YLH were sponsored by the Centre Henri Lebesgue, program ANR-11-LABX-0020-01.
EF and PR were sponsored by the Inria Associated team {\em Bubbles}.

\begin{appendices}

    %%%%%%%%%%%%%%%%%%%%%%%%%%%%%%%%%%%%%%%%%%%%
    %%%%%%%%%%%%%%%%%%%%%%%%%%%%%%%%%%%%%%%%%%%%
    %%%%%%%%%%%%%%%%%%%%%%%%%%%%%%%%%%%%%%%%%%%%
    %%%%%%%%%%%%%%%%%%%%%%%%%%%%%%%%%%%%%%%%%%%%
    %%%%%%%%%%%%%%%%%%%%%%%%%%%%%%%%%%%%%%%%%%%%

    \section{Proof of Lemma \ref{lemma: conserved quantities in HO}}
    \label{appendix: conserved}

    We will need the following result, known as the Pohozaev identity.
    \begin{lemma}[Pohozaev Identity]
        \label{lemma: pohozaev identity}
        Let \( x\in \mathbb{R}^d \), and \( f\in H^1( \mathbb{R}^d ) \) such that \( xf\in \mathbb{L}^2( \mathbb{R}^d ) \).
        Then
        \begin{equation}
            \label{eqn: pohozaev identity}
            \int \Delta f \overline{\left( \frac{d}{2} f + x\cdot \nabla f \right)} dx = - \int |\nabla f|^2 dx
            .
        \end{equation}
    \end{lemma}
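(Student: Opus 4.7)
The plan is to carry out the standard Pohozaev integration-by-parts computation, splitting the integrand according to its two pieces and handling each by integration by parts.

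First I would decompose
\[
\int \Delta f\,\overline{\left(\tfrac{d}{2}f + x\cdot\nabla f\right)}\, dx = \tfrac{d}{2}\int \Delta f\cdot\bar f\, dx + \int \Delta f\,\overline{x\cdot\nabla f}\, dx.
\]
The first summand is disposed of by a single integration by parts, yielding $-\tfrac{d}{2}\int|\nabla f|^2\,dx$. For the second, expand $x\cdot\nabla f = \sum_j x_j\partial_j f$ and integrate by parts once in the Laplacian; using $\partial_k(x_j\partial_j f) = \delta_{jk}\partial_j f + x_j\partial_j\partial_k f$ produces
\[
\int \Delta f\,\overline{x\cdot\nabla f}\, dx = -\int|\nabla f|^2\,dx \;-\; \sum_{j,k}\int x_j\,\partial_k f\,\overline{\partial_j\partial_k f}\, dx.
\]

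The residual sum is then controlled by the pointwise identity $\partial_j|\partial_k f|^2 = 2\Re\bigl(\partial_k f\,\overline{\partial_j\partial_k f}\bigr)$. Summing over $k$ and multiplying by $x_j$, one finds $(x\cdot\nabla)|\nabla f|^2 = 2\Re\sum_{j,k}x_j\partial_k f\,\overline{\partial_j\partial_k f}$. A final integration by parts, exploiting $\nabla\cdot x = d$, gives $\int(x\cdot\nabla)|\nabla f|^2\,dx = -d\int|\nabla f|^2\,dx$, so that the real part of the residual sum equals $-\tfrac{d}{2}\int|\nabla f|^2\,dx$. Combining the three contributions yields exactly $-\int|\nabla f|^2\,dx$, as claimed.

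The main obstacle is not the algebra but the rigorous justification of the integrations by parts: under the sole hypothesis $f\in H^1$, neither is $\Delta f$ a priori an $L^2$ object, nor are the mixed second-derivative terms guaranteed to decay fast enough at infinity to discard boundary contributions. The plan would be to work instead in a density class (say Schwartz functions, or $C_c^\infty(\mathbb{R}^d)$), execute the calculation there where all boundary terms are automatic, and then pass to the limit using the $H^1$-control on $\nabla f$ together with the weight control $xf\in\mathbb{L}^2$ that is supplied by the hypothesis. This approximation step is where the assumption $xf\in\mathbb{L}^2$ is genuinely needed: it precisely controls the error terms coming from cutting off $f$ by a smooth spatial truncation.
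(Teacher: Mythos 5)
Your proof is correct, but it follows a genuinely different route from the paper. You perform the direct Rellich--Pohozaev computation: split off the term $\tfrac d2\int\Delta f\,\bar f$, integrate the Laplacian by parts against $\overline{x\cdot\nabla f}$, and absorb the residual double sum through the pointwise identity $(x\cdot\nabla)|\nabla f|^2=2\Re\sum_{j,k}x_j\,\partial_k f\,\overline{\partial_j\partial_k f}$ together with $\nabla\cdot x=d$. The paper instead uses a dilation argument: with $f_\lambda(x)=\lambda^{d/2}f(\lambda x)$ one has $\int|\nabla f_\lambda|^2=\lambda^2\int|\nabla f|^2$, and differentiating at $\lambda=1$ (recognizing $\tfrac d2 f+x\cdot\nabla f$ as the generator of this $L^2$-scaling) yields $\int\nabla f\cdot\overline{\nabla\bigl(\tfrac d2 f+x\cdot\nabla f\bigr)}=\int|\nabla f|^2$, after which a single integration by parts gives the identity. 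The scaling proof is shorter and more conceptual; your computation is more elementary and has the side benefit of making visible that the argument really controls only the \emph{real part} of the cross term $\sum_{j,k}\int x_j\partial_k f\,\overline{\partial_j\partial_k f}$ --- so what is actually proved (for complex-valued $f$) is the identity for the real part of the left-hand side, which is precisely the form used later in Appendix~A; note the paper's own differentiation step implicitly carries the same $\Re$, so neither proof is more lossy than the other on this point. Your final paragraph on justifying the manipulations by working in a dense class and passing to the limit matches the paper's one-line reduction ``by density to $f\in\mathcal C^\infty_c$''; both treatments leave that step at the same informal level.
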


    \begin{proof}
        By density, we only need to prove equation \eqref{eqn: pohozaev identity} for \( f \in \mathcal{C}^\infty_c( \mathbb{R}^d ) \), where \( \mathcal{C}^\infty_c ( \mathbb{R}^d ) \) denotes the space of infinitely smooth functions with compact support in \( \mathbb{R}^d \).
        Let
        \begin{equation*}
            f_\lambda(x) := \lambda^{\frac{d}{2} } f(\lambda x)
            ,
        \end{equation*}
        then
        \begin{equation*}
            \int |\nabla f_\lambda |^2 dx = \lambda^2 \int |\nabla f|^2 dx
            .
        \end{equation*}
        Differentiating this identity with respect to \( \lambda \) and evaluating the result at \( \lambda = 1 \) yields
        \begin{equation*}
            \int \nabla f \cdot \overline{ \nabla \left( \frac{d}{2} f + x \cdot \nabla f \right) } dx = \int |\nabla f|^2 dx
            .
        \end{equation*}
        We integrate by parts the LHS, and obtain \eqref{eqn: pohozaev identity}.
    \end{proof}

    \vspace{1em}

    Now turn to the proof of Lemma \ref{lemma: conserved quantities in HO}.   

    Mass conservation:
    \begin{align*}
        \partial_{t} \|\psi\|^2_{\mathbb{L}^2}
         & = \partial_{t} \int |\psi|^2 = 2 \Re \int \bar{\psi} \partial_{t} \psi = 2 \Re \int -i \bar{\psi} \left( -\mu\Delta \psi + \mu|x|^2 \psi + \lambda |\psi|^2 \psi \right) \\
         & = 2 \Re \int -i \left( \mu\bar{\psi} \Delta \psi + \mu|x|^2 |\psi|^2 + \lambda |\psi|^4  \right) = 2\mu\Re \int -i \bar{\psi} \Delta \psi                                \\
         & = 2 \mu \Re \int i |\nabla \psi|^2 = 0
        .
    \end{align*}
    Energy conservation:
    \begin{align*}
        \frac{\textd}{\textd t} E_{\mu,\lambda}
         & = \frac{1}{2} \frac{\textd}{\textd t} \left\langle -\mu\Delta \psi + \mu|x|^2 \psi + \frac{\lambda}{2} |\psi|^2 \psi, \psi \right\rangle                                                                                         \\
        % &= \frac{1}{2} \frac{\textd}{\textd t} \left( \langle \nabla \psi, \nabla \psi \rangle + \langle |x|^2 \psi, \psi \rangle + \langle \lambda |\psi|^2, |\psi|^2 \rangle \right) \\
         & = \frac{1}{2} \left( -2\mu\Re\langle \Delta \psi, \partial_{t} \psi \rangle + 2\mu\Re \langle |x|^2 \psi, \partial_{t} \psi \rangle + 4 \Re\left\langle \frac{\lambda}{2} |\psi|^2 \psi, \partial_{t} \psi \right\rangle \right) \\
         & = \Re \left(i \langle \partial_{t} \psi, \partial_{t} \psi \rangle\right) = 0
        .
    \end{align*}
    For the momentum, we compute
    \begin{align}
        \frac{1}{2} \frac{\textd}{\textd t} \int |x|^2 |\psi|^2
         & = \frac{1}{2} \frac{\textd}{\textd t} \langle |x|^2 \psi, \psi \rangle = \Re \langle |x|^2 \psi, \partial_{t} \psi \rangle = \Re \langle |x|^2 \psi, i \mu\Delta \psi - i \mu|x|^2 \psi - i\lambda|\psi|^2 \psi \rangle \nonumber \\
         & = \mu\Im \langle |x|^2 \psi, \Delta \psi \rangle = \mu\Im \int |x|^2 \psi \Delta \bar{\psi} = -\mu\Im \int \nabla \bar{\psi} \cdot \nabla \left( |x|^2 \psi \right) \nonumber                                                     \\
         & = -\mu\Im \int \nabla \bar{\psi} \cdot 2x \psi - \mu\Im \int \nabla \bar{\psi} \cdot \nabla \psi |x|^2 = -2\mu\Im \int x\cdot \nabla \bar{\psi} \psi \nonumber                                                                    \\
         & = 2\mu\Im \int x\cdot \nabla \psi \bar{\psi} \label{eqn: lemma conservation -- intermediate equation no 1}
        ,
    \end{align}
    and
    \begin{equation*}
        \frac{1}{2} \frac{\textd}{\textd t} \Im \int x\cdot \nabla \psi \bar{\psi}
        = \frac{1}{2} \Im \int \left( x\cdot \nabla \partial_{t} \psi \bar{\psi} + x\cdot \nabla \psi \partial_{t} \bar{\psi} \right)
        .
    \end{equation*}
    An integration by parts gives
    \begin{equation*}
        \int x\cdot \nabla \phi \psi = - \int \phi \nabla \cdot (x \psi) = -\int \phi \left( \psi d + x \cdot \nabla \psi \right)
        ,
    \end{equation*}
    hence
    \begin{align*}
        \frac{1}{2} \frac{\textd}{\textd t} \Im \int x\cdot \nabla \psi \bar{\psi}
         & = \frac{1}{2} \Im \int \left( - \partial_{t} \psi \left( \bar{\psi} d + x \cdot \nabla \bar{\psi} \right) + x\cdot \nabla \psi \partial_{t} \bar{\psi} \right)   \\
         & = \frac{1}{2} \Im \int \left( - \partial_{t} \psi \bar{\psi} d - \partial_{t} \psi x\cdot \nabla \bar{\psi} + \partial_{t} \bar{\psi} x\cdot \nabla \psi \right) \\
         & = \frac{1}{2} \Im \int \left( - \partial_{t} \psi \bar{\psi} d + 2 i \Im \left[ \partial_{t} \bar{\psi} x\cdot \nabla \psi \right] \right)                       \\
         & = -\frac{d}{2} \Im \int \partial_{t} \psi \bar{\psi} + \Im \int \partial_{t} \bar{\psi} x\cdot \nabla \psi
        .
    \end{align*}
    Recall the equation satisfied by \( \psi \):
    \begin{equation*}
        \partial_{t} \psi = i\mu \Delta \psi - i\mu |x|^2 \psi - i \lambda |\psi|^2 \psi
        ,
    \end{equation*}
    therefore
    \begin{align*}
        \frac{1}{2} \frac{\textd}{\textd t} \Im \int x\cdot \nabla \psi \bar{\psi}
         & = -\frac{d}{2} \Im \int i\left[ \mu\Delta \psi - \mu|x|^2 \psi - \lambda |\psi|^2 \psi \right] \bar{\psi}                       \\
         & \qquad + \Im \int i\left[ - \mu\Delta \bar{\psi} + \mu|x|^2 \bar{\psi} + \lambda |\psi|^2 \bar{\psi} \right] x\cdot \nabla \psi
        .
    \end{align*}
    We have
    \begin{equation*}
        -\frac{d}{2} \Im \int i\left[ \mu\Delta \psi - \mu|x|^2 \psi - \lambda |\psi|^2 \psi \right] \bar{\psi}
        = \frac{d}{2} \int \left[ \mu|\nabla \psi|^2 + \mu |x|^2 |\psi|^2 + \lambda |\psi|^4 \right]
        ,
    \end{equation*}
    and
    \begin{equation*}
        \Im \int i\left[ - \mu\Delta \bar{\psi} + \mu|x|^2 \bar{\psi} + \lambda |\psi|^2 \bar{\psi} \right] x\cdot \nabla \psi = \Re \int \left[ - \mu \Delta \bar{\psi} + \mu |x|^2 \bar{\psi} + \lambda |\psi|^2 \bar{\psi} \right] x\cdot \nabla \psi
        .
    \end{equation*}
    Moreover,
    \begin{align*}
        \int |x|^2 \bar{\psi} x\cdot \nabla \psi
        = -\int \psi \nabla \cdot \left( x|x|^2 \bar{\psi} \right)                                          & = - \int \psi \left( d |x|^2 \bar{\psi} + 2|x|^2 \bar{\psi} + x|x|^2 \cdot \nabla \bar{\psi} \right) \\
        \iff \int |x|^2 \bar{\psi} x\cdot \nabla \psi + \overline{\int |x|^2 \bar{\psi} x\cdot \nabla \psi} & = - \int \psi \left( d |x|^2 \bar{\psi} + 2|x|^2 \bar{\psi} \right)                                  \\
        \iff \Re \int |x|^2 \bar{\psi} x\cdot \nabla \psi                                                   & = - \int \psi \left( \frac{d}{2} |x|^2 \bar{\psi} + |x|^2 \bar{\psi} \right)
        .
    \end{align*}
    Finally,
    \begin{align*}
        \frac{1}{2} \frac{\textd}{\textd t} \Im \int x\cdot \nabla \psi \bar{\psi}
         & = \frac{d}{2} \int \left[ \mu |\nabla \psi|^2 + \mu |x|^2 |\psi|^2 + \lambda |\psi|^4 \right]                                                                                                           \\
         & \qquad + \Re \int \left[ - \mu \Delta \bar{\psi} + \lambda |\psi|^2 \bar{\psi} \right] x\cdot \nabla \psi - \mu \int \psi \left( \frac{d}{2} |x|^2 \bar{\psi} + |x|^2 \bar{\psi} \right)                \\
         & = \frac{d}{2} \int \left[ \mu |\nabla \psi|^2 + \lambda |\psi|^4 \right] + \Re \int \left[ -\mu \Delta \bar{\psi} + \lambda |\psi|^2 \bar{\psi} \right] x\cdot \nabla \psi - \mu \int |x|^2 |\psi|^2    \\
         & = \frac{d}{2}\mu  \int |\nabla \psi|^2 - \mu \int |x|^2 |\psi|^2 + \frac{d}{2} \lambda \int |\psi|^4 + \Re \int \left[ - \mu \Delta \bar{\psi} + \lambda |\psi|^2 \bar{\psi} \right] x\cdot \nabla \psi
    \end{align*}
    We are in the two-dimensional case \( d=2 \), hence
    \begin{align*}
        \frac{1}{2} \frac{\textd}{\textd t} \Im \int x\cdot \nabla \psi \bar{\psi}
         & = \int \mu |\nabla \psi|^2 - \mu \int |x|^2 |\psi|^2 + \lambda \int |\psi|^4 + \Re \int \left[ - \mu \Delta \bar{\psi} + \lambda |\psi|^2 \bar{\psi} \right] x\cdot \nabla \psi \\
         & = 2 E_\lambda + \frac{\lambda}{2} \int |\psi|^4  - 2\mu \int |x|^2 |\psi|^2 + \Re \int \left[ - \mu \Delta \bar{\psi} + \lambda |\psi|^2 \bar{\psi} \right] x\cdot \nabla \psi
        .
    \end{align*}
    Moreover,
    \begin{align*}
        \int |\psi|^2 \bar{\psi} x\cdot \nabla \psi
                                                               & = - \int \psi \nabla \cdot \left( |\psi|^2 \bar{\psi} x \right)                                                                                      \\
                                                               & = - \int \psi \left( 2\Re \left( \bar{\psi} \nabla \psi \right) \cdot \bar{\psi} x + |\psi|^2 \nabla\bar{\psi}\cdot x + d|\psi|^2 \bar{\psi} \right) \\
                                                               & = - \int \left( 2\Re \left( \bar{\psi} \nabla \psi \right) \cdot |\psi|^2 x + \psi |\psi|^2 \nabla\bar{\psi}\cdot x + 2|\psi|^4 \right)              \\
        \iff 2 \Re \int |\psi|^2 \bar{\psi} x\cdot \nabla \psi & = - 2\Re \int \bar{\psi} \nabla \psi \cdot |\psi|^2 x - 2 \int |\psi|^4                                                                              \\
        \iff \Re \int |\psi|^2 \bar{\psi} x\cdot \nabla \psi   & = - \frac{1}{2} \int |\psi|^4
        ,
    \end{align*}
    Finally,
    \begin{align*}
        \frac{1}{2} \frac{\textd}{\textd t} \Im \int x\cdot \nabla \psi \bar{\psi}
         & = 2 E_\lambda + \frac{\lambda}{2} \int |\psi|^4  - 2\mu \int |x|^2 |\psi|^2 - \mu \Re \int \Delta \bar{\psi} x\cdot \nabla \psi - \frac{\lambda}{2} \int |\psi|^4 \\
         & = 2 E_\lambda - 2\mu \int |x|^2 |\psi|^2 - \mu \Re \int \Delta \bar{\psi} x\cdot \nabla \psi
    \end{align*}
    We then use the Pohozaev identity \eqref{eqn: pohozaev identity} in dimension \( d=2 \), which yields
    \begin{equation*}
        \Re\left( \int x\cdot \nabla \psi \Delta \bar{\psi} \right) = 0
        .
    \end{equation*}
    Therefore,
    \begin{equation*}
        \frac{1}{2} \frac{\textd}{\textd t} \Im \int x\cdot \nabla \psi \bar{\psi}  = 2 E_{\mu,\lambda} - 2\mu \int |x|^2 |\psi|^2
        .
    \end{equation*}

    From the conservation of the energy \( E_\lambda \) and equation \eqref{eqn: lemma conservation -- intermediate equation no 1},
    \begin{equation*}
        \frac{\textd^2}{\textd t^2} \Im \int x\cdot \nabla \psi \bar{\psi} = -16\mu^2 \Im \int x\cdot \nabla \psi \bar{\psi}
        .
    \end{equation*}
    Hence, the conservation laws
    \begin{align*}
         & \frac{1}{16} \left( \frac{\textd}{\textd t} \left[ \Im \int x\cdot \nabla \psi \bar{\psi} \right] \right)^2 + \mu^2\left( \Im \int x\cdot \nabla \psi \bar{\psi}  \right)^2 \\
         & = \left( E_{\mu,\lambda} - \mu \|x\psi\|^2_{ \mathbb{L}^2 } \right)^2 + \mu^2 \left( \Im \int x\cdot \nabla \psi \bar{\psi} \right)^2
    \end{align*}

    For the non radial conservation law:
    \begin{equation*}
        \frac{\textd}{\textd t} \Im \int \partial_{j} \psi \bar{\psi}
        = -2\Im \int \partial_{t} \psi \overline{ \partial_{j} \psi }
        = 2\Re \int i \partial_{t} \psi \overline{ \partial_{j} \psi }
        = 2 \mu \int |x|^2 \Re \left(\psi \overline{ \partial_{j} \psi }\right)
        = -2\mu \int x_j |\psi|^2
        ,
    \end{equation*}
    owing to the facts that integrations by parts yield
    \begin{equation*}
        -\Re \int \Delta \psi \partial_{j} \bar{\psi} = \Re \int \partial_{j} \psi \Delta \bar{\psi}
        ,
    \end{equation*}
    and
    \begin{equation*}
        2 \Re \int |\psi|^2 \psi \partial_{j} \bar{\psi}
        = \int |\psi|^2 \partial_{j} |\psi|^2 = - \int |\psi|^2 \partial_{j} |\psi|^2  \implies \Re \int |\psi|^2 \psi \partial_{j} \bar{\psi} = 0
        .
    \end{equation*}
    We also have
    \begin{equation*}
        \frac{1}{2} \frac{\textd}{\textd t} \int x_j |\psi|^2
        = \Re \int x_j \partial_{t} \psi \bar{\psi}
        = \Im \int x_j i \partial_{t} \psi \bar{\psi} = \mu \Im \int -\Delta \psi x_j \bar{\psi}
        = \mu \Im \int \partial_{j} \psi \bar{\psi}
        .
    \end{equation*}
    Hence the relations
    \begin{equation*}
        \left|
        \begin{aligned}
            \frac{\textd}{\textd t} \int x_j |\psi|^2                     & = 2\mu \Im \int \partial_{j} \psi \bar{\psi} \\
            \frac{\textd}{\textd t} \Im \int \partial_{j} \psi \bar{\psi} & = -2\mu \int x_j |\psi|^2,
        \end{aligned}
        \right.
    \end{equation*}
    which have the conservation law
    \begin{equation*}
        \mathcal{P}_j = \frac{1}{4} \left( \Im \int \partial_{j} \psi \bar{\psi} \right)^2 + \mu^2 \left( \int x_j |\psi|^2 \right)^2
        .
    \end{equation*}

    \section{Proof of Lemma \ref{lemma: HO exact integration of parameters}}
    \label{appendix: proof of lemma exact integration of parameters}

    For the \( (X, \beta) \) part, it suffices to check that the change of variable $(X,\beta) \mapsto (a,\theta)$ defined by
    \begin{equation*}
        X_i = \sqrt{2a_i} \sin(\theta_i) \quad \text{ and } \quad
        \beta_i = \sqrt{2a_i} \cos(\theta_i)\qquad i=1,\dots, d
        ,
    \end{equation*}
    is symplectic and that
    \begin{equation*}
        X_i = \sqrt{2a_i(0)} \sin(2t + \theta_i(0)) \quad \text{ and } \quad \beta_i = \sqrt{2a_i(0)} \cos(2t + \theta_i(0))\qquad i=1,\dots, d ,
    \end{equation*}
    are solutions.

    For the \( (k, B) \) part we use the method of generating functions, described e.g. in \cite[Sect.~VI.5]{hairerGeometricNumericalIntegration2006}.
    We can express \( B \) in terms of \( k \) and the Hamiltonian \( \mathcal{E} \), so that on the set \( \{B > 0\} \) we have:
    \begin{equation}
        \label{eqn: proof HO -- definition of B in terms of CAL E and k}
        B = 2\sqrt{e^{4k} \mathcal{E} - e^{8k} - 1} .
    \end{equation}
    This equality holds for \( e^{4k} \in [e^{4k_0}, e^{4k_1}] \), where \( e^{4k_0}, e^{4k_1} \) are the reals roots of the polynomial \( -z^2 + \mathcal{E} z - 1 \),
    \begin{equation}
        \label{eqn: HO proof -- action angle variable - definition of k0 and k1}
        e^{4k_0} = \frac{1}{2} \left( \mathcal{E} - \sqrt{ \mathcal{E}^2 - 4 } \right),\quad e^{4k_1} = \frac{1}{2} \left( \mathcal{E} + \sqrt{ \mathcal{E}^2 - 4 } \right)
        .
    \end{equation}

    In order to obtain a symplectic change of variables, we look for a function \( S(k, \mathcal{E}) \) such that
    \begin{equation*}
        B = \frac{\partial S}{\partial k} (k, \mathcal{E}) .
    \end{equation*}
    We easily obtain \( S(k, \mathcal{E}) \), by integrating on \( [k_0, k] \):
    \begin{equation*}
        S(k, \mathcal{E}) = 2 \int_{k_0}^k \sqrt{e^{4z} \mathcal{E} - e^{8z} - 1} dz .
    \end{equation*}
    The variable \( \phi \) which makes the mapping \( (B, k) \mapsto (\phi, \mathcal{E}) \) symplectic is defined by
    \begin{equation*}
        \phi = \frac{\partial S}{\partial \mathcal{E}}(k, \mathcal{E}) = \int_{k_0}^k \frac{e^{4z}}{\sqrt{e^{4z} \mathcal{E} - e^{8z} - 1}} dz
        .
    \end{equation*}

    We have
    \begin{equation*}
        \frac{\textd \phi}{\textd t} = \frac{e^{4k} k_t}{\sqrt{e^{4k} - e^{8k} - 1}} = \frac{- e^{4k} \partial_{B} \mathcal{E}}{\frac{B}{2} } =  \frac{- e^{-4k} \frac{B}{2} e^{4k}}{\frac{B}{2} } = -1
        .
    \end{equation*}
    We now proceed to obtaining an explicit expression for \( \psi \):
    \begin{align*}
        \phi & = \int_{k_0}^k \frac{e^{4z}}{\sqrt{e^{4z} \mathcal{E} - e^{8z} - 1}} dz = \frac{1}{4} \int_{e^{4k_0}}^{e^{4k}} \frac{1}{\sqrt{ \mathcal{E}u - u^2 - 1 }} du                                                                   \\
             & = \frac{1}{4 \sqrt{\frac{ \mathcal{E}^2 }{4} - 1 }}  \int_{e^{4k_0}}^{e^{4k}} \frac{1}{\sqrt{ 1 - \left( \frac{u - \frac{ \mathcal{E} }{2} }{\sqrt{ \frac{ \mathcal{E}^2 }{4} - 1  }}  \right)^2 }} du                        \\
             & = \frac{1}{4} \int_{ \frac{e^{4k_0} - \frac{ \mathcal{E} }{2} }{\sqrt{ \frac{ \mathcal{E}^2 }{4} - 1 }}  }^{ \frac{e^{4k} - \frac{ \mathcal{E} }{2} }{\sqrt{ \frac{ \mathcal{E}^2 }{4} - 1 }}  } \frac{1}{\sqrt{1 - u^2}} du.
    \end{align*}
    Recall the definition \eqref{eqn: HO proof -- action angle variable - definition of k0 and k1} of \( k_0 \), which yields
    \begin{equation*}
        e^{4k_0} - \frac{ \mathcal{E} }{2} = - \sqrt{ \frac{\mathcal{E}^2}{4} - 1 }
        .
    \end{equation*}
    Therefore,
    \begin{align*}
        \phi & = \frac{1}{4} \int_{-1}^{ \frac{e^{4k} - \frac{ \mathcal{E} }{2} }{\sqrt{ \frac{ \mathcal{E}^2 }{4} - 1 }}  } \frac{1}{\sqrt{1 - u^2}} du = \frac{1}{4} \left( \arcsin\left( { \frac{e^{4k} - \frac{ \mathcal{E} }{2} }{\sqrt{ \frac{ \mathcal{E}^2 }{4} - 1 }}  }  \right) + \frac{\pi}{2}  \right) \nonumber \\
             & = \frac{1}{4} \arcsin\left( { \frac{e^{4k} - \frac{ \mathcal{E} }{2} }{\sqrt{ \frac{ \mathcal{E}^2 }{4} - 1 }}  }  \right) + \frac{\pi}{8} \in \left[ 0, \frac{\pi}{4}  \right]
        .
    \end{align*}
    We want the angle variable to lie in \( \left[ 0, 2\pi \right] \) so the above expression describes an eighth of a period.
    But we are only considering the set \( \{B>0\} \), thus the angle \( \xi \) we are looking for must lie only in \( [0, \pi] \).
    Hence we set \( (\xi, h) = (4\phi, \mathcal{E}/4) \) and let the Hamiltonian \( \mathcal{E}(\xi, h) = 4h \) with a slight abuse of notation.
    It is then clear that \( \frac{\textd h}{\textd t} = 0 \) and \( \frac{\textd \xi}{\textd t} = -4 \).
    Moreover,
    \begin{equation}
        \label{proof: HO proof -- definition of xi with k}
        \xi = \arcsin\left( \frac{e^{4k} - \frac{ \mathcal{E} }{2} }{\sqrt{ \frac{ \mathcal{E}^2 }{4} - 1 }} \right) + \frac{\pi}{2}
        \in \left[ 0, \pi  \right]
        ,
    \end{equation}
    and hence
    \begin{equation*}
        \frac{e^{4k} - \frac{ \mathcal{E} }{2} }{\sqrt{ \frac{ \mathcal{E}^2 }{4} - 1 }} = \sin\left( \xi - \frac{\pi}{2} \right)
        = - \cos\left( \xi \right)
        .
    \end{equation*}
    We obtain
    \begin{align*}
        e^{4k} = L^2 & = \frac{ \mathcal{E}}{2} - \cos(\xi) \sqrt{ \frac{ \mathcal{E}^2 }{4} - 1 } = 2h - \cos(\xi) \sqrt{4 h^2 - 1} \\
                     & = 2h \left( 1 - \cos(\xi) \sqrt{1 - \frac{1}{4h^2}} \right)
        .
    \end{align*}

    With this formula, we have
    \begin{equation*}
        0 < L^2 < 4h = \mathcal{E}
        ,
    \end{equation*}
    and \eqref{eqn: proof HO -- definition of B in terms of CAL E and k} becomes
    \begin{align*}
        B & = 2\sqrt{ \mathcal{E}e^{4k} - e^{8k} - 1} = 2\sqrt{4he^{4k} - (e^{4k})^2 - 1} = 2\sqrt{ (4h^2 - 1)\sin^2(\xi) } \\
          & = 2 \sin(\xi) \sqrt{4h^2 - 1},
    \end{align*}
    where the last equality holds for \( \xi \in [0, \pi] \).

    We can now integrate the equations for \( A \), and $\gamma$.
    The first one is
    \begin{equation*}
        A_t = \frac{AB}{2} (d-2) e^{-4k} .
    \end{equation*}
    From the expressions we just obtained we get
    \begin{equation*}
        A_t = A (d-2) \frac{\sin(\xi) \sqrt{4h^2 - 1}}{2h - \cos(\xi) \sqrt{ 4 h^2 - 1 }} .
    \end{equation*}
    The solution to this equation is of the form
    \begin{equation*}
        A(t) = A(0) \exp\left\{(d-2) \int_{0}^t \frac{\sin(\xi(\sigma)) \sqrt{4h(\sigma)^2 - 1}}{2h(\sigma) - \cos(\xi(\sigma)) \sqrt{ 4 h(\sigma)^2 - 1 }} \textd \sigma\right\}
        .
    \end{equation*}
    Moreover, we know that \( \sigma\mapsto h(\sigma) \) is constant, and that \( \xi(\sigma) = \xi(0) - 4\sigma \).
    Hence we have to solve
    \begin{equation*}
        A(t) = A(0) \exp\left\{(d-2) \int_{0}^t \frac{\sin(\xi(0) - 4\sigma) \sqrt{4h(0)^2 - 1}}{2h(0) - \cos(\xi(0) - 4\sigma) \sqrt{ 4 h(0)^2 - 1 }} \textd \sigma\right\}
        .
    \end{equation*}
    One can easily check that we have the following equality:
    \begin{align*}
         & \int_{0}^t \frac{\sin(\xi(0) - 4\sigma) \sqrt{4h(0)^2 - 1}}{2h(0) - \cos(\xi(0) - 4\sigma) \sqrt{ 4 h(0)^2 - 1 }} \textd \sigma                        \\
         & = -\frac{1}{4} \left[ \log\left( 2h(t) - \cos(\xi(t)) \sqrt{4h(t)^2 - 1} \right) - \log\left( 2h(0) - \cos(\xi(0)) \sqrt{4h(0)^2 - 1} \right) \right].
    \end{align*}
    Note that, unless \( h(0) = \frac{1}{2} \) or \( h(t) = \frac{1}{2} \), these quantities are well-defined since \( 2h(\sigma) > \sqrt{4h(\sigma)^2 - 1}, \sigma\in \{0, t\} \).
    Thus, we obtain
    \begin{align*}
        A(t) & = A(0) e^{ \frac{2-d}{4} \left[ \log\left( 2h(t) - \cos(\xi(t)) \sqrt{4h(t)^2 - 1} \right) - \log\left( 2h(0) - \cos(\xi(0)) \sqrt{4h(0)^2 - 1} \right) \right] } \\
             & = C \left( 2h(0) - \cos(\xi(0) - 4t) \sqrt{4h(0)^2 - 1} \right)^{\frac{2-d}{4}},
    \end{align*}
    where we defined \( C :=  A(0) \left( 2h(0) - \cos(\xi(0)) \sqrt{4h(0)^2 - 1} \right)^{\frac{d-2}{4}} \).
    We recognize here the expressions for \( L(0)^2 \) and \( L(t)^2 \).

    Let us finally turn to the expression for \( \gamma(t) \).
    We proceed to the direct integration of \( \gamma_t \).
    %\begin{equation*}
    %    \gamma(t) - \gamma(0) = \int_0^t \dot{\gamma}(\tau)d\tau = \int_0^T \left[ |\beta(\tau)|^2 - |X(\tau)|^2 - \frac{\tilde{d}}{L(\tau)^2}  \right]d\tau
    %.\end{equation*}
    %%
    We have
    \begin{align*}
        \gamma(t) - \gamma(0) & = \int_0^t \dot{\gamma}(\tau)d\tau   = \int_0^t \left[ |\beta(\tau)|^2 - |X(\tau)|^2 \right] d\tau                  \\
                              & = \int_0^t \left\{ \sum_{l=1}^d 2a_l \cos(\theta_l(\tau))^2 - \sum_{l=1}^d 2a_l\sin(\theta_l(\tau))^2\right\} d\tau \\
                              & = \int_0^t 2\sum_{l=1}^d a_l \left( \cos(\theta_l(\tau))^2 - \sin(\theta_l(\tau))^2 \right) d\tau                   \\
                              & = \int_0^t \sum_{l=1}^d 2a_l \cos(2\theta_l(\tau)) d\tau                                                            \\
                              & = \sum_{l=1}^d \frac{a_l}{2} \left[ \sin(2\theta_l(t)) - \sin(2\theta_l(0)) \right],
    \end{align*}
    where the last equality has been obtained using \eqref{eqn: lemma -- HO exact integration of parameters - update of action-angle variables}.

    Finally, we calculate the evolution of the time $s(t)$ in term of the original time $t$.
    Owing to the expression of \( L(t) \) we obtained earlier,
    \begin{align*}
        s(t) := \int_0^t \frac{1}{L(\tau)^2} dt & = \int_0^t \frac{1}{\underbrace{2h(0)}_{=:c_1} - \underbrace{\sqrt{4h(0)^2-1}}_{=:c_2} \cos(\xi(0)-4\tau)} d\tau \\
                                                & = \int_0^t \frac{1}{c_1 - c_2 \cos(\xi(0) - 4\tau)} d\tau                                                        \\
                                                & = \frac{1}{4} \int^{\xi(0)}_{\xi(0)-4t} \frac{1}{c_1 - c_2 \cos(\tau)} d\tau.
    \end{align*}
    Recall the following trigonometric identity:
    \begin{equation*}
        \cos(2\tau) = \frac{1 - \tan(\tau)^2}{1 + \tan(\tau)^2}, \quad \tau \in \mathbb{R}
        ,
    \end{equation*}
    hence
    \begin{align*}
         & \int_0^t \frac{1}{c_1 - c_2 \cos(\xi(0) - 4\tau)} d\tau                                                                                                                                                                                                                      \\
         & = \frac{1}{4} \int^{\xi(0)}_{\xi(0)-4t} \frac{1}{c_1 - c_2 \frac{1 - \tan(\tau/2)^2}{1 + \tan(\tau/2)^2} } d\tau                                                                                                                                                             \\
         & = \frac{1}{4} \int^{\xi(0)}_{\xi(0)-4t} \frac{1 + \tan(\tau/2)^2}{c_1(1 + \tan(\tau/2)^2) - c_2(1 - \tan(\tau/2)^2) } d\tau                                                                                                                                                  \\
         & = \frac{1}{4} \int^{\xi(0)}_{\xi(0)-4t} \frac{1 + \tan(\tau/2)^2}{(c_1 + c_2)\tan(\tau/2)^2 + c_1 - c_2} d\tau                                                                                                                                                               \\
         & = \frac{1}{4(c_1 - c_2)} \int^{\xi(0)}_{\xi(0)-4t} \frac{1 + \tan(\tau/2)^2}{\frac{c_1 + c_2}{c_1-c_2}\tan(\tau/2)^2 + 1} d\tau                                                                                                                                              \\
         & = \frac{1}{2(c_1 - c_2)} \int^{\frac{\xi(0)}{2}}_{\frac{\xi(0)}{2}-2t} \frac{1 + \tan(\tau)^2}{\frac{c_1 + c_2}{c_1-c_2}\tan(\tau)^2 + 1} d\tau                                                                                                                              \\
         & = \frac{1}{2(c_1 - c_2)} \int^{\frac{\xi(0)}{2}}_{\frac{\xi(0)}{2}-2t} \frac{ \frac{d}{d\tau} (\tan(\tau)) }{\frac{c_1 + c_2}{c_1-c_2}\tan(\tau)^2 + 1} d\tau                                                                                                                \\
         & = \frac{1}{2(c_1 - c_2)} \frac{1}{\sqrt{\frac{c_1 + c_2}{c_1-c_2}}} \int^{\frac{\xi(0)}{2}}_{\frac{\xi(0)}{2}-2t} \frac{ \frac{d}{d\tau} \left( \sqrt{\frac{c_1 + c_2}{c_1-c_2}} \tan(\tau) \right) }{\left[\sqrt{\frac{c_1 + c_2}{c_1-c_2}} \tan(\tau)\right]^2 + 1} d\tau.
    \end{align*}
    Moreover, \( (c_1 - c_2)(c_1 + c_2) = c_1^2 - c_2^2 = (2h)^2 - (4h^2 - 1) = 1 \) and \( c_1 - c_2 > 0 \), thus \( \sqrt{\frac{c_1+c_2}{c_1-c_2} } = (c_1+c_2) \) and
    \begin{equation*}
        \int_0^t \frac{1}{L(\tau)^2} d\tau
        = \frac{1}{2} \int_{\frac{\xi(0)}{2} - 2t}^{\frac{\xi(0)}{2}} \frac{\frac{d}{d\tau} \left( (c_1+c_2) \tan(\tau) \right)}{\left( (c_1+c_2) \tan(\tau) \right)^2 + 1} d\tau.
    \end{equation*}
    Now let \( m_0 \in \mathbb{Z} \) such that \( \frac{\xi(0)}{2} \in m_0\pi + \left( -\frac{\pi}{2}, \frac{\pi}{2}  \right] \), and \( m_t \in \mathbb{Z} \) such that \( \frac{\xi(t)}{2} \in (m_0-m_t)\pi + \left( -\frac{\pi}{2}, \frac{\pi}{2}  \right] \).
    We recall that \( \xi(t) = \xi(0) - 4t \).
    Then
    \begin{align*}
         & \int_0^t \frac{1}{L(\tau)^2} d\tau = \frac{1}{2} \int_{\frac{\xi(0)}{2}-2t}^{\frac{\xi(0)}{2}} \underbrace{\frac{\frac{d}{d\tau} \left( (c_1+c_2) \tan(\tau) \right)}{\left( (c_1+c_2) \tan(\tau) \right)^2 + 1}}_{=:f(\tau)} d\tau \\
         & = \frac{1}{2} \int_{m_0\pi - \frac{\pi}{2} }^{\frac{\xi(0)}{2}} f(\tau) d\tau
        + \frac{1}{2} \int_{(m_0-1)\pi - \frac{\pi}{2} }^{m_0\pi - \frac{\pi}{2}} f(\tau) d\tau
        + \dots + \frac{1}{2} \int_{\frac{\xi(0)}{2}-2t}^{(m_0-m_t)\pi + \frac{\pi}{2} } f(\tau) d\tau.
    \end{align*}
    For \( m\in \mathbb{Z} \), we have
    \begin{align*}
        \int_{m\pi - \frac{\pi}{2}}^{m\pi + \frac{\pi}{2}} f(\tau) d\tau
         & = \left[ \arctan\left( (c_1+c_2) \tan(\tau) \right) \right]_{m\pi - \frac{\pi}{2}}^{m\pi + \frac{\pi}{2}} \\
         & = \left[ \arctan\left( (c_1+c_2) \tan(\tau) \right) \right]_{- \frac{\pi}{2}}^{\frac{\pi}{2}} = \pi.
    \end{align*}
    Now write \( \widetilde{\frac{\xi(0)}{2}} := \frac{\xi(0)}{2} - m_0\pi \in \left( -\frac{\pi}{2}, \frac{\pi}{2} \right] \), and \( \widetilde{\frac{\xi(\tau)}{2}} := \frac{\xi(\tau)}{2} - (m_0 - m_t)\pi \in \left( -\frac{\pi}{2}, \frac{\pi}{2} \right] \). Then,
    \begin{align*}
         & \int_0^t \frac{1}{L(\tau)^2} d\tau                                                                                  \\
         & = \frac{1}{2} (m_t-1)\pi
        + \frac{1}{2} \int_{m_0\pi - \frac{\pi}{2} }^{\frac{\xi(0)}{2}} f(\tau) d\tau
        + \frac{1}{2} \int_{\frac{\xi(0)}{2}-2t}^{(m_0-m_t)\pi + \frac{\pi}{2}} f(\tau) d\tau                                  \\
         & = (m_t-1)\frac{\pi }{2}
        + \frac{1}{2} \int_{- \frac{\pi}{2} }^{\frac{\widetilde{\xi(0)}}{2}} f(\tau) d\tau
        + \frac{1}{2} \int_{\frac{\widetilde{\xi(t)}}{2}}^{\frac{\pi}{2}} f(\tau) d\tau                                        \\
         & = (m_t-1)\frac{\pi }{2}
        + \frac{1}{2} \left[ \arctan\left( (c_1+c_2) \tan(\tau) \right) \right]_{- \frac{\pi}{2} }^{\frac{\widetilde{\xi(0)}}{2}}
        + \frac{1}{2} \left[ \arctan\left( (c_1+c_2) \tan(\tau) \right) \right]_{\frac{\widetilde{\xi(t)}}{2}}^{\frac{\pi}{2}} \\
         & = (m_t-1)\frac{\pi }{2}
        + \frac{1}{2} \arctan\left( (c_1+c_2) \tan\left( \frac{\widetilde{\xi(0)}}{2} \right) \right) + \frac{\pi }{2}         \\
         & \qquad + \frac{\pi}{2} - \arctan\left( (c_1+c_2) \tan\left( \frac{\widetilde{\xi(t)}}{2} \right) \right)            \\
         & = m_t \frac{\pi}{2}
        + \frac{1}{2} \arctan\left( (c_1+c_2) \tan\left( \frac{\widetilde{\xi(0)}}{2} \right) \right)
        - \frac{1}{2} \arctan\left( (c_1+c_2) \tan\left( \frac{\widetilde{\xi(t)}}{2} \right) \right)                          \\
         & = m_t \frac{\pi }{2}
        + \frac{1}{2} \arctan\left( (c_1+c_2) \tan\left( \frac{\xi(0)}{2} \right) \right)
        - \frac{1}{2} \arctan\left( (c_1+c_2) \tan\left( \frac{\xi(0)}{2} - 2t \right) \right)
    \end{align*}
    Hence
    \begin{align*}
        s(t) = \int_0^t \frac{1}{L(\tau)^2} d\tau
         & = - \frac{1}{2} \arctan\left( (c_1+c_2) \tan\left( \frac{\xi(0)}{2} - 2t \right) \right)                     \\
         & \quad + \frac{1}{2} \arctan\left( (c_1+c_2) \tan\left( \frac{\xi(0)}{2} \right) \right) + m_t \frac{\pi }{2}
        .
    \end{align*}

    \section{Proof of Lemma \ref{lemma: HO change of variables between params and action-angle variables}}
    \label{appendix: proof change of variables HO}

    \begin{proof}
        We have \( a_i(0) = \frac{1}{2} \left( X_i(0)^2 + \beta_i(0)^2 \right), i=1, \dots, d \). If \( a_i(0) > 0 \) we can define \( \theta_i(0) \) as \( \theta_i(0) = \arctan \left( \frac{X_i(0)}{\beta_i(0)}  \right) \).
        Otherwise, if \( a_i(0) = 0 \), then we recall that \( a(t) = a(0) \) and hence -- whatever \( \theta(0) \) -- we have \( X_i(t) = 0 \) and \( \beta_i(t) = 0 \).
        Therefore, in the case \( a_i(0) = 0 \), the exact value of \( \theta_i(0) \) does not change the behavior of \( t\mapsto (X_i(t), \beta_i(t)) \).

        For the \( (L, B) \) part,
        \begin{equation*}
            L(0)^2 - 2h(0) = -\cos(\xi(0)) \sqrt{4h(0)^2 - 1}
            ,
        \end{equation*}
        hence
        \begin{equation*}
            (L(0)^2 - 2h(0))^2 = L(0)^4 - 4L(0)^2 h(0) + 4h(0)^2 = \cos(\xi(0))^2 \left( 4 h(0)^2 - 1 \right)
            .
        \end{equation*}
        We also have
        \begin{equation*}
            \left( \frac{B(0)}{2} \right)^2 = \frac{B(0)^2}{4} = \sin(\xi(0))^2 \left( 4h(0)^2 - 1 \right)
            .
        \end{equation*}
        Then,
        \begin{equation*}
            L(0)^4 - 4L(0)^2 h(0) + 4h(0)^2 + \frac{B(0)^2}{4} = 4h(0)^2 - 1
            ,
        \end{equation*}
        that is
        \begin{equation*}
            4L(0)^2 h(0) = L(0)^4 + \frac{B(0)^2}{4} + 1
            .
        \end{equation*}
        We deduce that \( h(0), L(0) \neq 0 \), and therefore
        \begin{equation*}
            h(0) = \frac{L(0)^4 + \frac{B(0)^2}{4} + 1}{4L(0)^2}
            .
        \end{equation*}
        Note that \( h(0) \) is bounded from below by \( \frac{1}{2} \).
        Indeed,
        \begin{align*}
                 & L(0)^4 - 2L(0)^2 + 1 + \frac{B(0)^2}{4}
            = \left( L(0)^2 - 1 \right)^2 + \frac{B(0)^2}{4} \geq 0 \\
            \iff & L(0)^4 + 1 + \frac{B(0)^2}{4} \geq 2L(0)^2       \\
            \iff & h(0) \geq \frac{1}{2}.
        \end{align*}
        From this we also get that \( h(0) = \frac{1}{2} \iff L(0)^2 = 1 \) and \( B(0) = 0 \).

        If \( h(0) > \frac{1}{2}  \), we have
        \begin{equation*}
            \left\{
                \begin{aligned}
                    2h(0) - L(0)^2 & = \cos(\xi(0)) \sqrt{4h(0)^2 - 1}  \\
                    \frac{B(0)}{2} & = \sin(\xi(0)) \sqrt{4h(0)^2 - 1},
                \end{aligned}
            \right.
            \implies \frac{B(0)/2}{2h(0) - L(0)^2} = \tan(\xi(0))
            ,
        \end{equation*}
        hence
        \begin{equation*}
            \xi(0) = \arctan\left( \frac{B(0)/2}{2h(0) - L(0)^2} \right)
            .
        \end{equation*}
        Otherwise, in the case \( h(0) = \frac{1}{2} \), the value of \( \xi(0) \) is not rigourously defined.
        However, as previously, the exact value of \( \xi(0) \) is not important because \( h(t) = h(0) = \frac{1}{2} \), which means that \( L(t)^2=1 \) and \( B(t) = 0 \).
        Therefore, in the case \( h(0) = \frac{1}{2} \), the mapping \( t\mapsto (L(t), B(t)) \) does not depend on the value of \( \xi(0) \).
        Finally, since the mapping \( t\mapsto L(t) \) does not depend on \( \xi(0) \) in the case \( h(0) = \frac{1}{2} \), we also have that \( t\mapsto A(t)\) does not depend on the exact value of \( \xi(0) \), thanks to the expression of \( A(t) = A(0) \left( L(t) / L(0)  \right)^{\frac{2-d}{2}}  \).

        Finally, it remains to show that if \( a_i(0) = 0,\, i\in \{1, \dots, d\} \) or \( h(0) = \frac{1}{2} \), then the behavior of the mappings \( t\mapsto \gamma(t) \) and \(t \mapsto s(t) \) do not depend on the exact value of \( \theta_i(0),\, i\in\{1, \dots, d\} \) or \( \xi(0) \).
        The exact formulae for \( \gamma(t) \) and \( s(t) \) are:
        \begin{align*}
            \gamma(t) & =
            \gamma(0) + \sum_{l=1}^d \frac{a_l(0)}{2} \left[ \sin(2\theta_l(t)) - \sin(2\theta_l(0)) \right]                                                        \\
            s(t)      & = \frac{1}{2} \arctan\left( \left( 2h(0) + \sqrt{4h(0)^2 - 1} \right) \tan\left( \frac{\xi(0)}{2} - 2t \right) \right)                      \\
                      & \qquad - \frac{1}{2} \arctan\left( \left( 2h(0) + \sqrt{4h(0)^2 - 1} \right) \tan\left( \frac{\xi(0)}{2} \right) \right) -m_t \frac{\pi}{2}
        \end{align*}
        It is clear that if \( a_i(0) = 0 \) then \( \gamma(t) \) does not depend on \( \theta_i(0) \) nor \( \theta_i(t) \), \( i\in \{1, \dots, d\} \).
        If \( h(0) = \frac{1}{2} \), then
        \begin{equation*}
            2h(0) + \sqrt{4h(0)^2 - 1} = 1
            ,
        \end{equation*}
        so that
        \begin{align*}
             & \frac{1}{2} \arctan\left( \left( 2h(0) + \sqrt{4h(0)^2 - 1} \right) \tan\left( \frac{\xi(0)}{2} - 2t \right) \right)                                             \\
             & \qquad - \frac{1}{2} \arctan\left( \left( 2h(0) + \sqrt{4h(0)^2 - 1} \right) \tan\left( \frac{\xi(0)}{2} \right) \right) -m_t \frac{\pi}{2}                      \\
             & = \frac{1}{2} \arctan\left( \tan\left(\frac{\xi(t)}{2}\right) \right) - \frac{1}{2} \arctan\left( \tan\left(\frac{\xi(0)}{2}\right) \right) - m_t\frac{\pi}{2} .
        \end{align*}
        Since \( \arctan: \mathbb{R}\mapsto \left( -\frac{\pi}{2}, \frac{\pi}{2} \right] \), we  write \( \widetilde{\frac{\xi(0)}{2}} := \frac{\xi(0)}{2} - m_0\pi \in \left( -\frac{\pi}{2}, \frac{\pi}{2} \right] \), and \( \widetilde{\frac{\xi(\tau)}{2}} := \frac{\xi(\tau)}{2} - (m_0 - m_t)\pi \in \left( -\frac{\pi}{2}, \frac{\pi}{2} \right] \). Then we have
        \begin{align*}
             & \frac{1}{2} \arctan\left( \tan\left(\frac{\xi(t)}{2}\right) \right) - \frac{1}{2} \arctan\left( \tan\left(\frac{\xi(0)}{2}\right) \right) - m_t\frac{\pi}{2} \\
             & = \frac{1}{2} \widetilde{\frac{\xi(t)}{2}} - \frac{1}{2} \widetilde{\frac{\xi(0)}{2}} - m_t \frac{\pi}{2}                                                    \\
             & = \frac{1}{2} \frac{\xi(t)}{2} - (m_0 - m_t)\frac{\pi}{2} - \frac{1}{2} \left(\frac{\xi(0)}{2} - m_0\pi \right) - m_t \frac{\pi}{2}                          \\
             & = \frac{1}{2} \left( \frac{\xi(t)}{2} - \frac{\xi(0)}{2}\right) = - t.
        \end{align*}
        This shows that, in the case \( h(0) = \frac{1}{2} \), the mapping \( t\mapsto \gamma(t) \) does not depend on the value chosen for the ill-defined quantity \( \xi(0) \).
    \end{proof}

    %%%%%%%%%%%%%%%%%%%%%%%%%%%%%%%%%%%%%%%%%%%%
    %%%%%%%%%%%%%%%%%%%%%%%%%%%%%%%%%%%%%%%%%%%%
    %%%%%%%%%%%%%%%%%%%%%%%%%%%%%%%%%%%%%%%%%%%%
    %%%%%%%%%%%%%%%%%%%%%%%%%%%%%%%%%%%%%%%%%%%%
    %%%%%%%%%%%%%%%%%%%%%%%%%%%%%%%%%%%%%%%%%%%%

    \section{Computing the coefficients of the linear system \eqref{eqn: linear system for DFMP}}
    \label{appendix: populating the linear system for DFMP}

    \subsection{Coefficients of the matrix \( \mathbf{A} \)}

    \noindent {\underline{\( \langle b_{l, 1}, b_{j, 1} \rangle \).}}

    \begin{align*}
        \langle b_{l, 1}, b_{j, 1} \rangle
         & = e^{i\gamma_l - i\gamma_j} \int_{ \mathbb{R}^d } e^{iL_l \beta_l \cdot \frac{x - X_l}{L_l} - i \frac{B_l}{4} \left |  \frac{x-X_l}{L_l} \right | ^2} e^{- \frac{1}{2} \left |  \frac{x-X_l}{L_l} \right | ^2}                                                             \\
         & \hspace{8em} \times e^{-iL_j \beta_j \cdot \frac{x - X_j}{L_j} + i \frac{B_j}{4} \left |  \frac{x-X_j}{L_j} \right | ^2} e^{- \frac{1}{2} \left |  \frac{x-X_j}{L_j} \right | ^2} dx                                                                                       \\
         & = e^{i(\gamma_l - \gamma_j)} \int_{ \mathbb{R}^d } e^{i \beta_l \cdot (x - X_l) - i\beta_j \cdot (x-X_j) } e^{-\frac{2+iB_l}{4} \left |  \frac{x-X_l}{L_l} \right | ^2} e^{- \frac{2-iB_j}{4}  \left |  \frac{x-X_j}{L_j} \right | ^2} dx                                  \\
         & = e^{i(\gamma_l - \gamma_j) - \frac{2+iB_l}{4L_l^2}  | X_l | ^2 - \frac{2-iB_j}{4L_j^2}  | X_j | ^2 - i\beta_l \cdot X_l + i\beta_j \cdot X_j }                                                                                                                            \\
         & \qquad \times \int_{ \mathbb{R}^d } e^{i(\beta_l - \beta_j) \cdot x} e^{-\frac{2+iB_l}{4L_l^2} ( | x | ^2 - 2x\cdot X_l)} e^{-\frac{2-iB_j}{4L_j^2} ( | x | ^2 - 2x\cdot X_j)} dx                                                                                          \\
         & = e^{i(\gamma_l - \gamma_j) - \frac{2+iB_l}{4L_l^2}  | X_l | ^2 - \frac{2-iB_j}{4L_j^2}  | X_j | ^2 - i\beta_l \cdot X_l + i\beta_j \cdot X_j }                                                                                                                            \\
         & \qquad \times\int_{ \mathbb{R}^d } e^{i(\beta_l - \beta_j + \frac{B_l}{2L_l^2} X_l - \frac{B_j}{2L_j^2} X_j  ) \cdot x} e^{x\cdot \left( \frac{1}{L_l^2} X_l + \frac{1}{L_j^2} X_j  \right)} e^{-\left(\frac{2+iB_l}{4L_l^2} + \frac{2-iB_j}{4L_j^2} \right)  | x | ^2} dx
    \end{align*}
    Let
    \begin{equation}
        \left |
        \begin{aligned}
            z   & := \frac{2+iB_l}{4L_l^2} + \frac{2-iB_j}{4L_j^2},                                                                                                               \\
            a   & := \frac{X_l}{L_l^2} + \frac{X_j}{L_j^2},                                                                                                                       \\
            \xi & := \frac{B_j}{2L_j^2} X_j + \beta_j - \frac{B_l}{2L_l^2} X_l - \beta_l,                                                                                         \\
            C   & = \exp\left\{i(\gamma_l - \gamma_j) - \frac{2+iB_l}{4L_l^2}  | X_l | ^2 - \frac{2-iB_j}{4L_j^2}  | X_j | ^2 - i\beta_l \cdot X_l + i\beta_j \cdot X_j \right\},
        \end{aligned}
        \right.
    \end{equation}
    and \( f(x) := e^{-z | x | ^2 + a\cdot x} \). Then
    \begin{equation*}
        \langle b_{l, 1}, b_{j, 1} \rangle
        = C \int_{ \mathbb{R}^d } e^{-i\xi\cdot x} f(x) dx = C \widehat{f}(\xi)
    \end{equation*}

    \noindent {\underline{\( \langle b_{l, n+1}, b_{j, 1} \rangle \), \( 1 \leq n \leq d \).}}

    \begin{align*}
        \langle b_{l, n+1}, b_{j, 1} \rangle
         & = C \int_{ \mathbb{R}^d } \frac{(x-X_l)_n}{L_l} e^{-i\xi\cdot x} f(x) dx  \\
         & = \frac{C}{L_l} \left( \widehat{x f}_n - (X_l)_n \widehat{f} \right)(\xi)
    \end{align*}

    \noindent {\underline{\( \langle b_{l, d+2}, b_{j, 1} \rangle \) }}

    \begin{align*}
        \langle b_{l, d+2}, b_{j, 1} \rangle
         & = C \int_{ \mathbb{R}^d } e^{-i\xi\cdot x} f(x) \frac{ | x-X_l | ^2}{L_l^2} dx                                         \\
         & = \frac{C}{L_l^2}  \int_{ \mathbb{R}^d } e^{-i\xi\cdot x} f(x) \left(  | x | ^2 - 2x\cdot X_l +  | X_l | ^2 \right) dx \\
         & = \frac{C}{L_l^2} \left( \widehat{ | x | ^2 f} - 2X_l \cdot \widehat{x f} +  | X_l | ^2 \widehat{f} \right)(\xi)
    \end{align*}

    \noindent {\underline{\( \langle b_{l, n+1}, b_{j, m+1} \rangle \), \( 1 \leq n, m \leq d \).}}

    \begin{align*}
        \langle b_{l, n+1}, b_{j, m+1} \rangle
         & = C \int_{ \mathbb{R}^d } \frac{x_n-(X_l)_n}{L_l} \frac{x_m-(X_j)_m}{L_j}  e^{-i\xi\cdot x} f(x) dx                                           \\
         & = \frac{C}{L_j L_l}  \int_{ \mathbb{R}^d } (x_n-(X_l)_n) (x_m-(X_j)_m)  e^{-i\xi\cdot x} f(x) dx                                              \\
         & = \frac{C}{L_j L_l}  \int_{ \mathbb{R}^d } \left[ x_n x_m - x_n (X_j)_m - x_m (X_l)_n + (X_l)_n (X_j)_m \right]                               \\
         & \hspace{2cm} \times e^{-i\xi\cdot x} f(x) dx                                                                                                  \\
         & = \frac{C}{L_j L_l} \left[ \widehat{x_n x_m f} - (X_l)_n \widehat{x_m f} - (X_j)_m \widehat{x_n f} + (X_l)_n (X_j)_m \widehat{f} \right](\xi)
        .
    \end{align*}

    \noindent {\underline{\( \langle b_{l, d+2}, b_{j, m+1} \rangle \), \( 1 \leq m \leq d\).}}

    \begin{align*}
         & \langle b_{l, d+2}, b_{j, m+1} \rangle                                                                                                                                       \\
         & = C \int_{ \mathbb{R}^d } e^{-i\xi\cdot x} e^{-z | x | ^2 + a\cdot x} \frac{ | x-X_l | ^2}{L_l^2} \frac{x_m-(X_j)_m}{L_j} dx                                                 \\
         & = \frac{C}{L_l^2 L_j} \int_{ \mathbb{R}^d } e^{-i\xi\cdot x} e^{-z | x | ^2 + a\cdot x} \left(  | x | ^2 - 2x\cdot X_l +  | X_l | ^2 \right) \left( x_m - (X_j)_m \right) dx \\
         & = \frac{C}{L_l^2 L_j} \left[ \widehat{x_m | x | ^2 f} - 2X_l \cdot \widehat{x_m x f} + | X_l | ^2 \widehat{x_m f} \right.                                                    \\
         & \hspace{1cm} \left. - (X_j)_m \widehat{ | x | ^2 f} + 2(X_j)_m X_l \cdot \widehat{x f} - | X_l | ^2 (X_j)_m \widehat{f} \right](\xi)
        .
    \end{align*}

    \noindent {\underline{\( \langle b_{l, d+2}, b_{j, d+2} \rangle \) }}

    \begin{align*}
        \langle b_{l, d+2}, b_{j, d+2} \rangle
         & = C \int_{ \mathbb{R}^d } e^{-i\xi\cdot x} e^{-z | x | ^2 + a\cdot x} \frac{ | x-X_l | ^2}{L_l^2} \frac{ | x-X_j | ^2}{L_j^2} dx                                   \\
         & = \frac{C}{L_l^2 L_j^2} \int_{ \mathbb{R}^d } e^{-i\xi\cdot x} e^{-z | x | ^2 + a\cdot x} \left(  | x | ^2 - 2x\cdot X_l +  | X_l | ^2 \right)                     \\
         & \hspace{2cm} \times \left(  | x | ^2 -2x\cdot X_j +  | X_j | ^2 \right) dx                                                                                         \\
         & = \frac{C}{L_l^2 L_j^2} \int_{ \mathbb{R}^d } e^{-i\xi\cdot x} e^{-z | x | ^2 + a\cdot x} \left(  | x | ^4 - 2 | x | ^2 x\cdot X_l +  | X_l | ^2  | x | ^2 \right. \\
         & \hspace{2cm} - 2(x\cdot X_j) | x | ^2 + 4(x\cdot X_l)(x\cdot X_j) - 2(x\cdot X_j)  | X_l | ^2                                                                      \\
         & \hspace{2cm} \left. +  | x | ^2  | X_j | ^2 - 2(x\cdot X_l)  | X_j | ^2 +  | X_l | ^2  | X_j | ^2 \right) dx                                                       \\
         & = \frac{C}{L_l^2 L_j^2} \left[ \widehat{ | x | ^4 f} - 2(X_l+X_j) \cdot \widehat{ | x | ^2 x f} +  \left(|X_l|^2 + |X_j|^2 \right) \widehat{ | x | ^2 f} \right.   \\
         & \hspace{2cm} + 4\widehat{(x\cdot X_l)(x\cdot X_j) f} - 2 \left(|X_l|^2 X_j + |X_j|^2 X_l \right) \cdot \widehat{x f}                                               \\
         & \hspace{2cm} \left. +  | X_l | ^2  | X_j | ^2 \widehat{f} \right](\xi)
    \end{align*}
    Moreover,
    \begin{align*}
        (x\cdot X_l)(x\cdot X_j)
         & = \left( \sum_{n=1}^d x_n (X_l)_n \right) \left( \sum_{m=1}^d x_m (X_j)_m \right) \\
         & = \sum_{n,m = 1}^d (X_l)_n (X_j)_m x_n x_m,
    \end{align*}
    Hence
    \begin{align*}
        \widehat{(x\cdot X_l)(x\cdot X_j) f} & = \sum_{n, m=1}^d (X_l)_n (X_j)_m \widehat{x_n x_m f}
        .
    \end{align*}

    \subsection{Coefficients of the vector of interactions \( S \)}

    \noindent {\underline{\( \langle u|u|^2, b_{j, 1} \rangle \)}}

    \begin{align*}
        \langle u|u|^2, b_{j, 1} \rangle
         & = \sum_{k,l,m} \frac{A_k A_l A_m}{L_k L_l L_m} \left\langle e^{i\Gamma_k + i\Gamma_l - i\Gamma_m} e^{-\frac{|y_k|^2 + |y_l|^2 + |y_m|^2}{2}}, e^{i\Gamma_j} e^{-\frac{1}{2} |y_j|^2} \right\rangle
    \end{align*}
    We recall the previously defined notations:
    \begin{equation*}
        \left|
        \begin{aligned}
            y_k         & = \frac{x-X_k}{L_k},                                               \\
            \Gamma_k(x) & = \gamma_k + \beta_k \cdot (x-X_k) - \frac{B_k}{4L_k^2} |x-X_k|^2.
        \end{aligned}
        \right.
    \end{equation*}
    Then,
    \begin{align*}
         & -\frac{1}{2} \left(|y_k|^2 + |y_l|^2 + |y_m|^2 + |y_j|^2\right)
        = - \frac{1}{2L_k^2} |x-X_k|^2 - \frac{1}{2L_l^2} |x-X_l|^2 - \frac{1}{2L_m^2} |x-X_m|^2                                                                                                                                    \\
         & \hspace{12em} - \frac{1}{2L_j^2} |x-X_j|^2                                                                                                                                                                               \\
         & = -\frac{1}{2}\left( \frac{1}{L_k^2} + \frac{1}{L_l^2} + \frac{1}{L_m^2} + \frac{1}{L_j^2} \right) |x|^2 + \left( \frac{1}{L_k^2} X_k + \frac{1}{L_l^2} X_l + \frac{1}{L_m^2} X_m + \frac{1}{L_j^2} X_j  \right) \cdot x \\
         & \qquad -\frac{1}{2} \left( \frac{|X_k|^2}{L_k^2} + \frac{|X_l|^2}{L_l^2} + \frac{|X_m|^2}{L_m^2} + \frac{|X_j|^2}{L_j^2} \right)
        ,
    \end{align*}
    and
    \begin{align*}
         & (\Gamma_k + \Gamma_l - \Gamma_m - \Gamma_j)                                                                                                                              \\
         & = \gamma_k + \beta_k \cdot (x-X_k) - \frac{B_k}{4L_k^2} |x-X_k|^2 + \gamma_l + \beta_l \cdot (x-X_l) - \frac{B_l}{4L_l^2} |x-X_l|^2                                      \\
         & \quad - \gamma_m - \beta_m \cdot (x-X_m) + \frac{B_m}{4L_m^2} |x-X_m|^2 - \gamma_j - \beta_j \cdot (x-X_j) + \frac{B_j}{4L_j^2} |x-X_j|^2                                \\
         & = (\gamma_k + \gamma_l - \gamma_m - \gamma_j) + (\beta_j \cdot X_j + \beta_m \cdot X_m - \beta_l\cdot X_l - \beta_k \cdot X_k)                                           \\
         & \quad - \left( \frac{B_k}{4L_k^2} |X_k|^2 + \frac{B_l}{4L_l^2} |X_l|^2 - \frac{B_m}{4L_m^2} |X_m|^2 - \frac{B_j}{4L_j^2} |X_j|^2  \right)                                \\
         & \quad + x \cdot \left( \beta_k + \beta_l - \beta_m - \beta_j + \frac{B_k}{2L_k^2} X_k + \frac{B_l}{2L_l^2} X_l - \frac{B_m}{2L_m^2} X_m - \frac{B_j}{2L_j^2} X_j \right) \\
         & \quad - \left( \frac{B_k}{4L_k^2} + \frac{B_l}{4L_l^2} - \frac{B_m}{4L_m^2} - \frac{B_j}{4L_j^2} \right) |x|^2
    \end{align*}

    Define
    \begin{equation*}
        \left|
        \begin{aligned}
            C_\Im & := \exp\left\{ i\left( \gamma_k + \gamma_l - \gamma_m - \gamma_j \right) \right\}                                                                                                                        \\
                  & \hspace{1em} \times \exp\left\{ i\left(\beta_j \cdot X_j + \beta_m \cdot X_m - \beta_l\cdot X_l - \beta_k \cdot X_k \right) \right\}                                                                     \\
                  & \hspace{1em} \times \exp\left\{- i\left( \frac{B_k}{4L_k^2} |X_k|^2 + \frac{B_l}{4L_l^2} |X_l|^2 - \frac{B_m}{4L_m^2} |X_m|^2 - \frac{B_j}{4L_j^2} |X_j|^2  \right) \right\}                             \\
            C_\Re & := \exp\left\{-\frac{1}{2} \left( \frac{|X_k|^2}{L_k^2} + \frac{|X_l|^2}{L_l^2} + \frac{|X_m|^2}{L_m^2} + \frac{|X_j|^2}{L_j^2} \right) \right\}                                                         \\
            C     & := \frac{A_k A_l A_m}{L_k L_l L_m} C_\Im C_\Re                                                                                                                                                           \\
            \xi   & := -\left[ \beta_k + \beta_l - \beta_m - \beta_j + \frac{B_k}{2L_k^2} X_k + \frac{B_l}{2L_l^2} X_l - \frac{B_m}{2L_m^2} X_m - \frac{B_j}{2L_j^2} X_j \right]                                             \\
            z     & := \frac{1}{2} \left( \frac{1}{L_k^2} + \frac{1}{L_l^2} + \frac{1}{L_m^2} + \frac{1}{L_j^2} \right) + i \left( \frac{B_k}{4L_k^2} + \frac{B_l}{4L_l^2} - \frac{B_m}{4L_m^2} - \frac{B_j}{4L_j^2} \right) \\
            a     & := \frac{1}{L_k^2}
            X_k + \frac{1}{L_l^2} X_l + \frac{1}{L_m^2} X_m + \frac{1}{L_j^2} X_j
        \end{aligned}
        \right.
    \end{equation*}
    and \( f(x) := e^{-z | x | ^2 + a\cdot x} \). Then
    \begin{equation}
        \langle u|u|^2, b_{j, 1} \rangle = \sum_{k,l,m}
        C \widehat{f}(\xi) .
    \end{equation}

    \noindent {\underline{\( \langle u|u|^2, b_{j, r+1} \rangle \), \( r = 1, \dots, d \)}}

    \begin{align*}
         & \langle u|u|^2, b_{j, r+1} \rangle                                                                                                                                                                                         \\
         & = \sum_{k,l,m} \frac{A_k A_l A_m}{L_k L_l L_m} \left\langle e^{i\Gamma_k + i\Gamma_l - i\Gamma_m} e^{-\frac{|y_k|^2 + |y_l|^2 + |y_m|^2}{2}}, e^{i\Gamma_j} e^{-\frac{1}{2} |y_j|^2} \frac{x_r-(X_j)_r}{L_j} \right\rangle \\
         & = \sum_{k,l,m} \frac{C}{L_j} \left( \widehat{x_r f} - (X_j)_r \widehat{f} \right).
    \end{align*}

    \noindent {\underline{\( \langle u|u|^2, b_{j, d+2} \rangle \)}}

    \begin{align*}
         & \langle u|u|^2, b_{j, d+2} \rangle                                                                                                                                                                                                  \\
         & = \sum_{k,l,m} \frac{A_k A_l A_m}{L_k L_l L_m} \left\langle e^{i\Gamma_k + i\Gamma_l - i\Gamma_m} e^{-\frac{|y_k|^2 + |y_l|^2 + |y_m|^2}{2}}, e^{i\Gamma_j} e^{-\frac{1}{2} |y_j|^2} \left|\frac{x-X_j}{L_j}\right|^2 \right\rangle \\
         & = \sum_{k,l,m} \frac{C}{L_j^2} \left( \widehat{|x|^2 f} - 2X_j \cdot \widehat{xf} + |X_j|^2 \widehat{f} \right).
    \end{align*}
    %

    %%%%%%%%%%%%%%%%%%%%%%%%%%%%%%%%%%%%%%%%%%%%
    %%%%%%%%%%%%%%%%%%%%%%%%%%%%%%%%%%%%%%%%%%%%
    %%%%%%%%%%%%%%%%%%%%%%%%%%%%%%%%%%%%%%%%%%%%
    %%%%%%%%%%%%%%%%%%%%%%%%%%%%%%%%%%%%%%%%%%%%
    %%%%%%%%%%%%%%%%%%%%%%%%%%%%%%%%%%%%%%%%%%%%

    \section{Fourier transforms of Gaussians}
    \label{appendix: fourier transforms of Gaussians}

    \begin{lemma}[Fourier transform of complex Gaussians]
        Let \( z\in \mathbb{C},\ \Re(z) \geq 0 \).
        Then,
        \begin{equation}
            \mathcal{F}\left( e^{-z|\cdot|^2} \right)(\xi) = \left( \frac{\pi}{z} \right)^{\frac{d}{2}} e^{-\frac{| \xi |^2}{4z} }, \quad \xi \in \mathbb{R}^d
            .
        \end{equation}
        More generally, let \( z = z_1 + iz_2 \in \mathbb{C} \), \( z_1, z_2 \in \mathbb{R}, \ z_1 > 0 \), \( a\in \mathbb{R}^d \) and
        \begin{equation}
            f:\ x \in \mathbb{R}^d \mapsto \exp\left( -z|x|^2 + a\cdot x \right) \in \mathbb{C}
            ,
        \end{equation}
        then we have the Fourier transforms given by Table \ref{table: useful Fourier transforms}.
    \end{lemma}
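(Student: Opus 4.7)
The plan is to (i) prove the basic Gaussian Fourier transform identity, (ii) handle the linear drift $a \cdot x$ by completing the square, and (iii) obtain all polynomial-weighted transforms in the table by differentiating in the parameter $a$.

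For step (i), I would separate variables so that $\mathcal{F}(e^{-z|\cdot|^2})(\xi) = \prod_j \int_{\mathbb{R}} e^{-zt^2 - it\xi_j}\, dt$, and reduce to the one-dimensional identity $\int_{\mathbb{R}} e^{-zt^2 - it\eta}\, dt = \sqrt{\pi/z}\, e^{-\eta^2/(4z)}$. For real $z>0$ this follows from the classical $\int e^{-zt^2}\, dt = \sqrt{\pi/z}$ after completing the square $-zt^2 - it\eta = -z(t + i\eta/(2z))^2 - \eta^2/(4z)$ and shifting the integration contour from $\mathbb{R}$ to $\mathbb{R} - i\eta/(2z)$, which is justified by Cauchy's theorem since the integrand is entire in the complex variable and decays exponentially on the relevant strip. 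The extension to complex $z$ with $\Re(z) > 0$ (and then to the imaginary axis minus the origin by a limit/oscillatory-integral argument) follows by analytic continuation: both sides are holomorphic in $z$ on the open right half-plane -- using the principal branch of $z^{1/2}$ -- and coincide on $(0, +\infty)$, hence everywhere in $\{\Re z > 0\}$ by the identity theorem.

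For step (ii), completing the square in $f(x) = e^{-z|x|^2 + a \cdot x}$ yields
\[
-z|x|^2 + a \cdot x = -z\sum_{j=1}^d \left(x_j - \frac{a_j}{2z}\right)^2 + \frac{|a|^2}{4z},
\]
and applying step (i) after shifting each coordinate contour by $-a_j/(2z) \in \mathbb{C}$ gives
\[
\widehat{f}(\xi) = \left(\frac{\pi}{z}\right)^{d/2} \exp\!\left(\frac{|a|^2}{4z} - \frac{i a \cdot \xi}{2z} - \frac{|\xi|^2}{4z}\right).
\]
The remaining entries of Table~\ref{table: useful Fourier transforms} involving $\widehat{x_j f}$, $\widehat{x_j x_k f}$, $\widehat{|x|^2 f}$, $\widehat{x_j |x|^2 f}$ and $\widehat{|x|^4 f}$ are then produced by the identity $\partial_{a_j} f = x_j f$: differentiating the above master formula once, twice and so on in the auxiliary variables $a_j$ directly yields the required transforms as closed-form expressions in $(\xi, z, a)$, and no further analytic input is needed.

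The main obstacle is not conceptual but bookkeeping. The contour shift with complex displacement $a_j/(2z)$ and the branch choice for $z^{d/2}$ must be handled consistently on the right half-plane, and expanding successive $a$-derivatives to match the table entries that involve cubic and quartic prefactors is combinatorially heavy but entirely mechanical.
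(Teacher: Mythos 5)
Your proposal is correct, but it runs along a different track than the paper's proof. The paper does not reprove the base identity $\mathcal{F}(e^{-z|\cdot|^2})(\xi)=(\pi/z)^{d/2}e^{-|\xi|^2/(4z)}$; it takes it as known and handles the drift term without any contour deformation, by completing the square only with the \emph{real} shift $a/(2z_1)$ (so that $f(x)=g(x)\,e^{-i(z_2/z_1)a\cdot x+z|a|^2/(4z_1^2)}$ with $g(x)=e^{-z|x-a/(2z_1)|^2}$) and then invoking the elementary translation and modulation identities $\widehat{f(\cdot-b)}=e^{-ib\cdot\xi}\widehat f$ and $\widehat{fe^{-ib\cdot x}}(\xi)=\widehat f(\xi+b)$; the remaining algebra recombines the $z_1,z_2$ pieces into $E(\xi,a,z)=\exp\{-(\xi+ia)\cdot(\xi+ia)/(4z)\}$. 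You instead complete the square with the complex shift $a/(2z)$ and justify a contour displacement by Cauchy's theorem, and you also supply the analytic-continuation proof of the base formula that the paper omits; both of these steps are sound for $\Re z>0$ (and your limiting remark covers the boundary case of the first identity). For the table entries, the paper iterates $\widehat{x_mf}=i\partial_{\xi_m}\widehat f$, whereas you iterate $\widehat{x_mf}=\partial_{a_m}\widehat f$ via $\partial_{a_m}f=x_mf$; these are equivalent (differentiation under the integral is immediate from Gaussian domination) and produce identical closed forms, e.g. $\widehat{xf}=-i(\pi/z)^{d/2}\tfrac{\xi+ia}{2z}E(\xi,a,z)$. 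What your route buys is a cleaner exponent, arriving at $-(\xi+ia)\cdot(\xi+ia)/(4z)$ in one stroke, plus a self-contained proof of the first display; what the paper's route buys is the avoidance of any complex-analytic contour argument, staying entirely within standard Fourier identities. Your only obligations beyond the paper's are the careful justification of the complex contour shift and the principal-branch bookkeeping for $z^{d/2}$, both of which you flag and which pose no real difficulty.
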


    \begin{table}
        \begin{center}
            \begin{tabular}{cc}
                \hline
                \( h(x) \)          & \( \widehat{h}(\xi) / e^{- \frac{(\xi + ia)\cdot (\xi + ia)}{4z}} \)                                                                                          \\[0.5em]
                \hline
                \( f \)             & \( \left( \frac{\pi}{z}  \right)^{\frac{d}{2}} \)                                                                                                             \\[1em]
                \( xf \)            & \( -i \left( \frac{\pi}{z} \right)^{\frac{d}{2}} \frac{\xi + ia}{2z} \)                                                                                       \\[1em]
                \( x_m x_n f \)     & \( - \frac{1}{4z^2} \left(\frac{\pi}{z} \right)^{\frac{d}{2}}  \left( \xi_n + ia_n \right) \left( \xi_m + ia_m \right) \)                                     \\[1em]
                \( x_m^2 f \)       & \( \frac{1}{2z} \left(\frac{\pi}{z} \right)^{\frac{d}{2}}  \left[ 1 - \frac{(\xi_m + ia_m)^2}{2z} \right] \)                                                  \\[1em]
                \( | x |^2 f \)     & \( \frac{1}{2z} \left(\frac{\pi}{z} \right)^{\frac{d}{2}} \left[ d - \frac{|\xi|^2 + 2ia\cdot \xi - |a|^2}{2z} \right] \)                                     \\[1em]
                \( x_m | x |^2 f \) & \( - \frac{i}{4z^2} \left(\frac{\pi}{z} \right)^{\frac{d}{2}} \left( \xi_m + ia_m \right) \left[ d+2 - \frac{|\xi|^2 + 2ia\cdot \xi - |a|^2}{2z} \right] \)   \\[1em]
                \( x_m^2 x_n^2 f \) & \( \frac{1}{4z^2} \left( \frac{\pi}{z} \right)^{\frac{d}{2}} \left( 1 - \frac{(\xi_n + ia_n)^2}{2z} \right) \left( 1 - \frac{(\xi_m + ia_m)^2}{2z} \right) \) \\[1em]
                \( x_m^4 f \)       & \( \frac{1}{4z^2} \left(\frac{\pi}{z} \right)^{\frac{d}{2}} \left[ 3 - 6 \frac{(\xi_m + ia_m)^2}{2z} + \frac{(\xi_m + ia_m)^4}{4z^2} \right] \)               \\[0.5em]
                \hline
            \end{tabular}
            \caption{\centering Fourier Transform of some polynomials in \( x = (x_1, \dots, x_d) \in \mathbb{R}^d \) multiplied by \( f(x) = e^{-z|x|^2 +a\cdot x}, \, z\in \mathbb{C}, \Re(z) > 0, a \in \mathbb{R}^d \).}
            \label{table: useful Fourier transforms}
        \end{center}
    \end{table}

    \begin{proof}

        For the sake of clarity, for \( \xi, a \in \mathbb{R}^d \) and \( z\in \mathbb{C} \), let
        \begin{equation*}
            E(\xi, a, z) := \exp\left\{- \frac{|\xi|^2 + 2ia\cdot \xi - |a|^2}{4z} \right\} = \exp\left\{- \frac{(\xi + ia)\cdot (\xi+ia)}{4z} \right\}
            .
        \end{equation*}

        \noindent \underline{\( \widehat{f} \).}

        We have
        \begin{equation*}
            -z|x|^2 + a\cdot x = -z\left| x - \frac{a}{2z_1} \right|^2 - i \frac{z_2 a}{z_1} \cdot x + \frac{z|a|^2}{4z_1^2}
            .
        \end{equation*}
        Recall the following usual properties on Fourier transform:
        \begin{equation*}
            \widehat{f(x-a)} = \widehat{f}(\xi)e^{-ia\cdot \xi},\quad \widehat{fe^{-ia\cdot x}} = \widehat{f}(\xi+a)
            .
        \end{equation*}
        Let
        \begin{equation*}
            g(x) = e^{-z\left| x - \frac{a}{2z_1} \right|^2}
            ,
        \end{equation*}
        then
        \begin{equation*}
            \hat{g}(\xi) = \left( \frac{\pi}{z} \right)^{\frac{d}{2} } e^{-\frac{|\xi|^2}{4z} - \frac{ia\cdot \xi}{2z_1} }
        \end{equation*}
        and
        \begin{equation*}
            f(x) = g(x) e^{-\frac{iz_2}{z_1} a\cdot x + \frac{z|a|^2}{4z_1^2} }
            .
        \end{equation*}
        Hence,
        \begin{align*}
            \widehat{f}(\xi)
             & = e^{\frac{z|a|^2}{4z_1^2}} \hat{g}\left( \xi + \frac{z_2}{z_1} a \right) = \left( \frac{\pi}{z} \right)^{\frac{d}{2} } e^{\frac{z|a|^2}{4z_1^2}} e^{-\frac{1}{4z} \left| \xi + \frac{z_2}{z_1} a \right|^2 - \frac{ia}{2z_1} \cdot \left( \xi + \frac{z_2}{z_1} a \right)} \\
             & = \left( \frac{\pi}{z}  \right)^{\frac{d}{2} } e^{\frac{z|a|^2}{4z_1^2} - \frac{1}{4z} \left( |\xi|^2 + 2\frac{z_2}{z_1} a\cdot \xi + \frac{z_2^2}{z_1^2} |a|^2 \right) - \frac{ia\cdot \xi}{2z_1} - \frac{i|a|^2 z_2}{2z_1^2} }                                            \\
             & = \left( \frac{\pi}{z}  \right)^{\frac{d}{2} } e^{-\frac{|\xi|^2}{4z} + (a\cdot \xi)\left( -\frac{z_2}{2zz_1} - \frac{i}{2z_1}  \right) + |a|^2 \left( \frac{z}{4z_1^2} - \frac{z_2^2}{4zz_1^2} - \frac{iz_2}{2z_1^2}  \right)}                                             \\
             & = \left( \frac{\pi}{z}  \right)^{\frac{d}{2}} e^{-\frac{|\xi|^2}{4z} - \frac{a\cdot \xi}{2zz_1} \left[ z_2 + i(z_1 + iz_2) \right] + \frac{|a|^2}{4zz_1^2} \left[ (z_1+iz_2)^2 - z_2^2 - 2iz_2(z_1+iz_2) \right]}                                                           \\
             & = \left( \frac{\pi}{z}  \right)^{\frac{d}{2}} e^{-\frac{|\xi|^2}{4z} - i\frac{a\cdot \xi}{2z} + \frac{|a|^2}{4z}}                                                                                                                                                           \\
             & = \left( \frac{\pi}{z}  \right)^{\frac{d}{2}}
            E(\xi, a, z).
        \end{align*}

        \vspace{1em}

        \noindent \underline{\( \widehat{x f} \).}

        \begin{align*}
            \widehat{xf}(\xi) & = i\nabla_\xi \hat{f} = i \nabla_\xi \left[ \left( \frac{\pi}{z} \right)^{\frac{d}{2}}  E(\xi, a, z) \right] = i\left( \frac{\pi}{z} \right)^{\frac{d}{2}} E(\xi, a, z) \left[ -\frac{\xi}{2z} -\frac{ia}{2z}  \right] \\
                              & = -i \left( \frac{\pi}{z} \right)^{\frac{d}{2}} \frac{\xi + ia}{2z}
            E(\xi, a, z).
        \end{align*}
        \vspace{1em}

        \noindent \underline{\( \widehat{x_m^2 f} \), \( m=1, \dots, d \).}

        \begin{align*}
            \widehat{x_m^2 f}(\xi) & = i\partial_{\xi_m} \left(\widehat{xf}\right)_m = i \partial_{\xi_m} \left[ -i \left( \frac{\pi}{z} \right)^{\frac{d}{2}} \frac{(\xi + ia)_m}{2z} E(\xi, a, z)   \right]           \\
                                   & = \frac{1}{2z} \left(\frac{\pi}{z} \right)^{\frac{d}{2}} \left[ E(\xi, a, z) + \left( \xi_m + ia_m \right) E(\xi, a, z) \left( -\frac{\xi_m}{2z} -\frac{ia_m}{2z}  \right) \right] \\
                                   & = \frac{1}{2z} \left(\frac{\pi}{z} \right)^{\frac{d}{2}}  \left[ 1 - \frac{(\xi_m + ia_m)^2}{2z} \right] E(\xi, a, z).
        \end{align*}
        \vspace{1em}

        \noindent \underline{\( \widehat{x_m x_n f} \), \( m, n=1, \dots, d, \, n\neq m \).}

        \begin{align*}
            \widehat{x_m x_n f}(\xi) & = i\partial_{\xi_m} \left(\widehat{xf}\right)_n = i \partial_{\xi_m} \left[ -i \left( \frac{\pi}{z} \right)^{\frac{d}{2}} \frac{\xi_n + ia_n}{2z} E(\xi, a, z)   \right] \\
                                     & = \frac{1}{2z} \left(\frac{\pi}{z} \right)^{\frac{d}{2}} \left( \xi_n + ia_n \right) \left[ - \frac{\xi_m + ia_m}{2z}  \right] E(\xi, a, z)                              \\
                                     & = - \frac{1}{4z^2} \left(\frac{\pi}{z} \right)^{\frac{d}{2}}  \left( \xi_n + ia_n \right) \left( \xi_m + ia_m \right) E(\xi, a, z).
        \end{align*}
        \vspace{1em}

        \noindent \underline{\( \widehat{|x|^2 f} \).}

        \begin{align*}
            \widehat{|x|^2 f}(\xi) & = \widehat{x_1^2 f}(\xi) + \dots + \widehat{x_d^2 f}(\xi)                                                                                       \\
                                   & = \frac{1}{2z} \left(\frac{\pi}{z} \right)^{\frac{d}{2}} \left[ d - \frac{(\xi_1 + ia_1)^2 + \dots + (\xi_d + ia_d)^2}{2z} \right] E(\xi, a, z) \\
                                   & = \frac{1}{2z} \left(\frac{\pi}{z} \right)^{\frac{d}{2}} \left[ d - \frac{|\xi|^2 + 2ia\cdot \xi - |a|^2}{2z} \right] E(\xi, a, z).
        \end{align*}
        \vspace{1em}

        \noindent \underline{\( \widehat{x_m|x|^2 f} \), \( m=1, \dots, d \).}

        \begin{align*}
             & \widehat{x_m|x|^2 f}(\xi)                                                                                                                                                                                                      \\
             & = i \partial_{\xi_m}\left[ \widehat{|x|^2 f}(\xi) \right] = i \partial_{\xi_m} \left[ \frac{1}{2z} \left(\frac{\pi}{z} \right)^{\frac{d}{2}} \left( d - \frac{|\xi|^2 + 2ia\cdot \xi - |a|^2}{2z} \right) E(\xi, a, z) \right] \\
             & = \frac{i}{2z} \left(\frac{\pi}{z} \right)^{\frac{d}{2}} \left[ -2\frac{\xi_m + ia_m}{2z} + \left( d - \frac{|\xi|^2 + 2ia\cdot \xi - |a|^2}{2z} \right) \left( - \frac{\xi_m + ia_m}{2z}  \right)  \right]  E(\xi, a, z)      \\
             & = - \frac{i}{4z^2} \left(\frac{\pi}{z} \right)^{\frac{d}{2}} \left( \xi_m + ia_m \right) \left[ d+2 - \frac{|\xi|^2 + 2ia\cdot \xi - |a|^2}{2z} \right]  E(\xi, a, z)
            .
        \end{align*}
        \vspace{1em}

        \noindent \underline{\( \widehat{x_m^3 f} \), \( m=1, \dots, d\).}

        \begin{align*}
            \widehat{x_m^3 f}(\xi)
             & = i \partial_{\xi_m} \left[ \widehat{x_m^2 f}(\xi) \right] = i \partial_{\xi_m} \left[ \frac{1}{2z} \left(\frac{\pi}{z} \right)^{\frac{d}{2}} \left( 1 - \frac{(\xi_m + ia_m)^2}{2z} \right) E(\xi, a, z) \right] \\
             & = \frac{i}{2z} \left(\frac{\pi}{z} \right)^{\frac{d}{2}} \left[ -2 \frac{\xi_m + ia_m}{2z} + \left( - \frac{\xi_m + ia_m}{2z}  \right) \left( 1 - \frac{(\xi_m + i a_m)^2}{2z}  \right) \right] E(\xi, a, z)      \\
             & = -\frac{i}{4z^2} \left(\frac{\pi}{z} \right)^{\frac{d}{2}} \left(\xi_m + ia_m\right) \left[ 3 - \frac{(\xi_m + ia_m)^2}{2z} \right] E(\xi, a, z)                                                                 \\
             & = -\frac{i}{4z^2} \left(\frac{\pi}{z} \right)^{\frac{d}{2}} \left[ 3(\xi_m+ia_m) - \frac{(\xi_m + ia_m)^3}{2z} \right] E(\xi, a, z).
        \end{align*}
        \vspace{1em}

        \noindent \underline{\( \widehat{x_m x_n^2 f} \), \( m, n=1, \dots, d,\, n\neq m \).}

        \begin{align*}
            \widehat{x_m x_n^2 f}(\xi) & = i\partial_{\xi_m} \left(\widehat{x_n^2 f}\right) = i \partial_{\xi_m} \left[ \frac{1}{2z} \left(\frac{\pi}{z} \right)^{\frac{d}{2}}  \left( 1 - \frac{(\xi_n + ia_n)^2}{2z} \right) E(\xi, a, z) \right] \\
                                       & = - \frac{i}{2z} \left(\frac{\pi}{z} \right)^{\frac{d}{2}}  \left( 1 - \frac{(\xi_n + ia_n)^2}{2z} \right) \frac{\xi_m + ia_m}{2z}
            E(\xi, a, z).
        \end{align*}
        \vspace{1em}

        \noindent \underline{\( \widehat{x_m^4 f} \), \( m=1, \dots, d \).}

        \begin{align*}
             & \widehat{x_m^4 f}(\xi)                                                                                                                                                                                                           \\
             & = i \partial_{\xi_m} \left[ \widehat{x_m^3 f}(\xi) \right] = i \partial_{\xi_m} \left[ -\frac{i}{4z^2} \left(\frac{\pi}{z} \right)^{\frac{d}{2}} \left( 3(\xi_m+ia_m) - \frac{(\xi_m + ia_m)^3}{2z} \right) E(\xi, a, z) \right] \\
             & = \frac{1}{4z^2} \left(\frac{\pi}{z} \right)^{\frac{d}{2}} \left[ 3 - 3\frac{(\xi+ia_m)^2}{2z} + \left( 3(\xi_m+ia_m) - \frac{(\xi_m + ia_m)^3}{2z} \right) \left( - \frac{\xi_m + ia_m}{2z}  \right) \right] E(\xi, a, z)       \\
             & = \frac{1}{4z^2} \left(\frac{\pi}{z} \right)^{\frac{d}{2}} \left[ 3 - 6 \frac{(\xi_m + ia_m)^2}{2z} + \frac{(\xi_m + ia_m)^4}{4z^2} \right] E(\xi, a, z).
        \end{align*}

        \vspace{1em}

        \noindent \underline{\( \widehat{x_m^2 x_n^2 f} \), \( m=1, \dots, d,\, n\neq m \).}

        \begin{align*}
            \widehat{x_m^2 x_n^2 f}(\xi) & = i\partial_{\xi_m} \left(\widehat{x_m x_n^2 f}\right)_n = i \partial_{\xi_m} \left[ - \frac{i}{2z} \left(\frac{\pi}{z} \right)^{\frac{d}{2}} \left( 1 - \frac{(\xi_n + ia_n)^2}{2z} \right) \frac{\xi_m + ia_m}{2z} E(\xi, a, z) \right] \\
                                         & = \frac{1}{4z^2} \left( \frac{\pi}{z} \right)^{\frac{d}{2}} \left( 1 - \frac{(\xi_n + ia_n)^2}{2z} \right) \partial_{\xi_m} \left[ (\xi_m + ia_m) E(\xi, a, z)  \right]                                                                   \\
                                         & = \frac{1}{4z^2} \left( \frac{\pi}{z} \right)^{\frac{d}{2}} \left( 1 - \frac{(\xi_n + ia_n)^2}{2z} \right) \left( 1 - \frac{(\xi_m + ia_m)^2}{2z} \right) E(\xi, a, z).
        \end{align*}

    \end{proof}

    %%%%%%%%%%%%%%%%%%%%%%%%%%%%%%%%%%%%%%%%%%%%
    %%%%%%%%%%%%%%%%%%%%%%%%%%%%%%%%%%%%%%%%%%%%
    %%%%%%%%%%%%%%%%%%%%%%%%%%%%%%%%%%%%%%%%%%%%
    %%%%%%%%%%%%%%%%%%%%%%%%%%%%%%%%%%%%%%%%%%%%
    %%%%%%%%%%%%%%%%%%%%%%%%%%%%%%%%%%%%%%%%%%%%

    \section{Miscellaneous computations}
    \label{appendix: Miscellaneous computations}

    We provide in this section some miscellaneous computations, which hold in dimension \( d=2 \) as long as \( v_j(s_j, y_j) = e^{- \frac{|y_j|^2}{2} },\ j=1, \dots, N \).

    \subsection{Conservative quantities in dimension \( d=2 \)}

    We give the explicit expressions for the conservative quantities involved in Lemma \ref{lemma: conserved quantities in HO}, in the two-dimensional case.

    The \( \mathbb{L}^2 \) norm of a sum of \( N \) bubbles is given by
    \begin{align*}
        \|u\|_{ \mathbb{L}^2 }^2 & = \sum_{k,l=1}^N \frac{A_k A_l}{L_k L_l} \langle b_{k, 1}, b_{l, 1} \rangle.
    \end{align*}

    The energy of a sum of bubbles is given by
    \begin{equation*}
        E_{\mu,\lambda} = \frac{\mu}{2} \left\langle -\Delta u + |x|^2 u, u \right\rangle + \frac{\lambda}{4} \left\langle |u|^2 u, u \right\rangle
        = E_{\mu, 0} + E_{0,\lambda} = \mu E_{1, 0} + \lambda E_{0, 1}
        .
    \end{equation*}
    We have
    \begin{equation*}
        2E_{1, 0} = \langle Hu, u \rangle = \langle -\Delta u, u \rangle + \langle |x|^2 u, u \rangle = \sum_{j,k=1}^N \langle \nabla_x u_j, \nabla_x u_k \rangle + \sum_{j,k=1}^N \langle |x|^2 u_j, u_k \rangle
        .
    \end{equation*}
    Furthermore,
    \begin{align*}
        \langle \nabla_x u_j, \nabla_x u_k \rangle
         & = \frac{A_j A_k}{L_j L_k}  \left\langle \left(i\beta_j - \frac{2+iB_j}{2L_j} y_j\right) b_{j, 1}, \left(i\beta_k - \frac{2+iB_k}{2L_k} y_k\right) b_{k, 1} \right\rangle       \\
         & = \frac{A_j A_k}{L_j L_k} \left\{ \beta_j \cdot \beta_k  \left\langle b_{j, 1}, b_{k, 1} \right\rangle +i \frac{2+iB_j}{2L_j} \beta_k \cdot
        \begin{pmatrix}
            \left\langle b_{j, 2},  b_{k, 1} \right\rangle \\
            \left\langle b_{j, 3}, b_{k, 1} \right\rangle
        \end{pmatrix}
        \right.                                                                                                                                                                           \\
         & \qquad - i  \frac{2-iB_k}{2L_k} \beta_j \cdot
        \begin{pmatrix}
            \left\langle  b_{j, 1}, b_{k, 2} \right\rangle \\
            \left\langle  b_{j, 1}, b_{k, 3} \right\rangle \\
        \end{pmatrix}
        \\
         & \qquad \left. + \frac{2+iB_j}{2L_j} \frac{2-iB_k}{2L_k}  \left(\left\langle b_{j, 2}, b_{k, 2} \right\rangle + \left\langle b_{(j, 3}, b_{k, 3} \right\rangle \right)\right\},
    \end{align*}
    and
    \begin{align*}
        \langle |x|^2 u_j, u_k \rangle
         & = \frac{A_j A_k}{L_j L_k} \left\langle \left( L_j^2 |y_j|^2 + 2L_j y_j \cdot X_j + |X_j|^2 \right) b_{j, 1}, b_{k, 1} \right\rangle \\
         & = \frac{A_j A_k}{L_j L_k} \left\{ L_j^2 \langle b_{j, 4}, b_{k, 1} \rangle + 2L_j X_j \cdot
            \begin{pmatrix}
                \langle b_{j, 2}, b_{k, 1} \rangle \\
                \langle b_{j, 3}, b_{k, 1} \rangle
            \end{pmatrix}
            + |X_j|^2 \langle b_{j, 1}, b_{k, 1} \rangle \right\}.
    \end{align*}

    We also have
    \begin{equation*}
        E_{0, 1} = \langle u|u|^2, u \rangle = \sum_{j=1}^N \frac{A_j}{L_j} \langle u|u|^2, b_{j, 1} \rangle .
    \end{equation*}

    We now proceed to computing the momentum, given by
    \begin{equation*}
        M_{\mu, \lambda} = \left( E_{\mu,\lambda} - \mu \|xu\|^2_{ \mathbb{L}^2 } \right)^2 + \mu^2 \left( \Im \int x\cdot \nabla u \bar{u} \right)^2
        .
    \end{equation*}
    We know how to compute \( E_{\mu, \lambda} \) from previously, as well as \( \|xu\|_{\mathbb{L}^2}^2 = \langle |x|^2 u, u \rangle \).
    It only remains to compute
    \begin{align*}
        \int x\cdot \nabla u \bar{u}
         & = \sum_{j,k=1}^N \frac{A_j A_k}{L_j L_k} \left\langle (L_j y_j + X_j) \cdot \left( i \beta_j - \frac{2+iB_j}{2L_j} y_j \right) b_{j, 1}, b_{k, 1} \right\rangle \\
         & = \sum_{j,k=1}^N \frac{A_j A_k}{L_j L_k} \left\{ iL_j \beta_j \cdot
            \begin{pmatrix}
                \left\langle b_{j, 2}, b_{k, 1} \right\rangle \\
                \left\langle b_{j, 3}, b_{k, 1} \right\rangle
            \end{pmatrix}
        - \frac{2+iB_j}{2} \left\langle b_{j, 4}, b_{k, 1} \right\rangle \right.                                                                                           \\
         & \qquad \left. + i \beta_j \cdot X_j \left\langle b_{j, 1}, b_{k, 1} \right\rangle - \frac{2+iB_j}{2L_j} X_j \cdot
        \begin{pmatrix}
            \left\langle b_{j, 2}, b_{k, 1} \right\rangle \\
            \left\langle b_{j, 3}, b_{k, 1} \right\rangle
        \end{pmatrix}
        \right\}
        .
    \end{align*}

    Note that all the inner products involved have already been computed when creating the DFMP matrix.

\end{appendices}

\bibliography{PhD}

\newcommand{\etalchar}[1]{$^{#1}$}
\begin{thebibliography}{BLKVL88}

\bibitem[AB21]{alphonsePolarDecompositionSemigroups2021}
Paul Alphonse and Joackim Bernier.
\newblock Polar decomposition of semigroups generated by non-selfadjoint
  quadratic differential operators and regularizing effects, December 2021.

\bibitem[ABB13]{antoineComputationalMethodsDynamics2013}
Xavier Antoine, Weizhu Bao, and Christophe Besse.
\newblock Computational methods for the dynamics of the nonlinear
  {{Schrodinger}}/{{Gross-Pitaevskii}} equations.
\newblock {\em Computer Physics Communications}, 184(12):2621--2633, December
  2013.

\bibitem[AKLP22]{adamowiczLaserinducedDynamicAlignment2022}
L.~Adamowicz, S.~Kvaal, C.~Lasser, and T.~B. Pedersen.
\newblock Laser-induced dynamic alignment of the {{HD}} molecule without the
  {{Born}}\textendash{{Oppenheimer}} approximation.
\newblock {\em The Journal of Chemical Physics}, 157(14):144302, October 2022.

\bibitem[BCL21]{bernierExactSplittingMethods2021}
Joackim Bernier, Nicolas Crouseilles, and Yingzhe Li.
\newblock Exact {{Splitting Methods}} for {{Kinetic}} and {{Schr\"odinger
  Equations}}.
\newblock {\em Journal of Scientific Computing}, 86(1):10, January 2021.

\bibitem[Ber20]{bernierExactSplittingMethods2020}
Joackim Bernier.
\newblock Exact splitting methods for semigroups generated by inhomogeneous
  quadratic differential operators, November 2020.

\bibitem[BJM03]{baoNumericalSolutionGross2003}
Weizhu Bao, Dieter Jaksch, and Peter~A. Markowich.
\newblock Numerical solution of the {{Gross}}\textendash{{Pitaevskii}} equation
  for {{Bose}}\textendash{{Einstein}} condensation.
\newblock {\em Journal of Computational Physics}, 187(1):318--342, May 2003.

\bibitem[BLKVL88]{broeckhoveEquivalenceTimedependentVariational1988}
J.~Broeckhove, L.~Lathouwers, E.~Kesteloot, and P.~Van~Leuven.
\newblock On the equivalence of time-dependent variational principles.
\newblock {\em Chemical Physics Letters}, 149(5-6):547--550, September 1988.

\bibitem[Car21]{carlesSemiclassicalAnalysisNonlinear2021}
R{\'e}mi Carles.
\newblock {\em Semi-Classical Analysis for Nonlinear {{Schr\"odinger}}
  Equations: {{WKB}} Analysis, Focal Points, Coherent States}.
\newblock {World Scientific}, {New Jersey}, 2nd edition edition, 2021.

\bibitem[CCFM17]{casasHighorderHamiltonianSplitting2017}
Fernando Casas, Nicolas Crouseilles, Erwan Faou, and Michel Mehrenberger.
\newblock High-order {{Hamiltonian}} splitting for the
  {{Vlasov}}\textendash{{Poisson}} equations.
\newblock {\em Numerische Mathematik}, 135(3):769--801, March 2017.

\bibitem[CCP{\etalchar{+}}00]{cerimeleNumericalSolutionGrossPitaevskii2000}
M.~M. Cerimele, M.~L. Chiofalo, F.~Pistella, S.~Succi, and M.~P. Tosi.
\newblock Numerical solution of the {{Gross-Pitaevskii}} equation using an
  explicit finite-difference scheme: {{An}} application to trapped
  {{Bose-Einstein}} condensates.
\newblock {\em Physical Review E}, 62(1):1382--1389, July 2000.

\bibitem[CK90]{coalsonMultidimensionalVariationalGaussian1990}
Rob~D. Coalson and Martin Karplus.
\newblock Multidimensional variational {{Gaussian}} wave packet dynamics with
  application to photodissociation spectroscopy.
\newblock {\em The Journal of Chemical Physics}, 93(6):3919--3930, September
  1990.

\bibitem[Dir30]{diracNoteExchangePhenomena1930}
P.~A.~M. Dirac.
\newblock Note on {{Exchange Phenomena}} in the {{Thomas Atom}}.
\newblock {\em Mathematical Proceedings of the Cambridge Philosophical
  Society}, 26(3):376--385, July 1930.

\bibitem[Fao12]{faouGeometricNumericalIntegration2012}
Erwan Faou.
\newblock {\em Geometric {{Numerical Integration}} and {{Schr\"odinger
  Equations}}}.
\newblock Zurich Lectures in Advanced Mathematics. {European Mathematical
  Society}, {Z\"urich, Switzerland}, 2012.

\bibitem[For96]{fornbergPracticalGuidePseudospectral1996}
Bengt Fornberg.
\newblock {\em A Practical Guide to Pseudospectral Methods}.
\newblock Number~1 in Cambridge Monographs on Applied and Computational
  Mathematics. {Cambridge University Press}, {Cambridge ; New York}, 1996.

\bibitem[FR20]{faouWeaklyTurbulentSolutions2020}
Erwan Faou and Pierre Raphael.
\newblock On weakly turbulent solutions to the perturbed linear {{Harmonic}}
  oscillator.
\newblock {\em arXiv:2006.08206 [math]}, June 2020.

\bibitem[Fre34]{frenkelWaveMechanicsAdvanced1934}
Jakov Frenkel.
\newblock {\em Wave {{Mechanics}} - {{Advanced General Theory}}}.
\newblock {Oxford University Press}, 1st edition, 1934.

\bibitem[GLPR18]{gerardTwoSolitonTransientTurbulent2018}
Patrick G{\'e}rard, Enno Lenzmann, Oana Pocovnicu, and Pierre Rapha{\"e}l.
\newblock A {{Two-Soliton}} with {{Transient Turbulent Regime}} for the {{Cubic
  Half-Wave Equation}} on the {{Real Line}}.
\newblock {\em Annals of PDE}, 4(1):7, June 2018.

\bibitem[Hel76]{hellerTimeDependentVariational1976}
Eric~J. Heller.
\newblock Time dependent variational approach to semiclassical dynamics.
\newblock {\em The Journal of Chemical Physics}, 64(1):63--73, January 1976.

\bibitem[HH87]{huberGeneralizedGaussianWave1987}
Daniel Huber and Eric~J. Heller.
\newblock Generalized {{Gaussian}} wave packet dynamics.
\newblock {\em The Journal of Chemical Physics}, 87(9):5302--5311, November
  1987.

\bibitem[HLW06]{hairerGeometricNumericalIntegration2006}
Ernst Hairer, Christian Lubich, and Gerhard Wanner.
\newblock {\em Geometric Numerical Integration: Structure-Preserving Algorithms
  for Ordinary Differential Equations}.
\newblock Number~31 in Springer Series in Computational Mathematics.
  {Springer}, {Berlin ; New York}, 2nd ed edition, 2006.

\bibitem[JMS11]{jinMathematicalComputationalMethods2011}
Shi Jin, Peter Markowich, and Christof Sparber.
\newblock Mathematical and computational methods for semiclassical
  {{Schr\"odinger}} equations.
\newblock {\em Acta Numerica}, 20:121--209, May 2011.

\bibitem[KV13]{killipNonlinearSchrOdinger2013}
R~Killip and M~Visan.
\newblock Nonlinear {{Schr}}\textbackslash "odinger {{Equations}} at {{Critical
  Regularity}}.
\newblock 2013.

\bibitem[LL20]{lasserComputingQuantumDynamics2020}
Caroline Lasser and Christian Lubich.
\newblock Computing quantum dynamics in the semiclassical regime.
\newblock {\em Acta Numerica}, 29:229--401, May 2020.

\bibitem[LR05]{leimkuhlerSimulatingHamiltonianDynamics2005}
Benedict Leimkuhler and Sebastian Reich.
\newblock {\em Simulating {{Hamiltonian Dynamics}}}.
\newblock {Cambridge University Press}, 1 edition, February 2005.

\bibitem[McL64]{mclachlanVariationalSolutionTimedependent1964}
A.D. McLachlan.
\newblock A variational solution of the time-dependent {{Schrodinger}}
  equation.
\newblock {\em Molecular Physics}, 8(1):39--44, January 1964.

\bibitem[MQ02]{mclachlanSplittingMethods2002}
Robert~I. McLachlan and G.~Reinout~W. Quispel.
\newblock Splitting methods.
\newblock In Arieh Iserles, editor, {\em Acta {{Numerica}} 2002}, pages
  341--434. {Cambridge University Press}, 1 edition, July 2002.

\bibitem[MR05]{merleBlowupDynamicUpper2005}
Frank Merle and Pierre Rapha{\"e}l.
\newblock The blow-up dynamic and upper bound on the blow-up rate for critical
  nonlinear {{Schr\"odinger}} equation.
\newblock {\em Annals of Mathematics}, 161(1):157--222, January 2005.

\bibitem[MR18]{martelStronglyInteractingBlow2018}
Yvan Martel and Pierre Rapha{\"e}l.
\newblock Strongly interacting blow up bubbles for the mass critical nonlinear
  {{Schr\"odinger}} equation.
\newblock {\em Annales scientifiques de l'\'Ecole normale sup\'erieure},
  51(3):701--737, 2018.

\bibitem[MRRS20]{merleIMPLOSIONTHREEDIMENSIONAL2020}
Frank Merle, Pierre Rapha{\"e}l, Igor Rodnianski, and Jeremie Szeftel.
\newblock {{ON THE IMPLOSION OF A THREE DIMENSIONAL COMPRESSIBLE FLUID}}, 2020.

\bibitem[QY10]{qianFastGaussianWavepacket2010}
Jianliang Qian and Lexing Ying.
\newblock Fast {{Gaussian}} wavepacket transforms and {{Gaussian}} beams for
  the {{Schr\"odinger}} equation.
\newblock {\em Journal of Computational Physics}, 229(20):7848--7873, October
  2010.

\bibitem[Tao06]{taoNonlinearDispersiveEquations2006}
Terence Tao.
\newblock {\em Nonlinear {{Dispersive Equations}}}, volume 106 of {\em {{CBMS
  Regional Conference Series}} in {{Mathematics}}}.
\newblock {American Mathematical Society}, {Providence, Rhode Island}, June
  2006.

\bibitem[TTK22]{tsedneeNumericalSolutionTimedependent2022}
Tsogbayar Tsednee, Banzragch Tsednee, and Tsookhuu Khinayat.
\newblock Numerical solution to the time-dependent {{Gross-Pitaevskii}}
  equation, November 2022.

\bibitem[Wan17]{wangNumericalSimulationGrossPitaevskii2017}
Huimin Wang.
\newblock Numerical simulation for the {{Gross-Pitaevskii}} equation based on
  the lattice {{Boltzmann}} method.
\newblock {\em Advances in Space Research}, 60(6):1261--1270, September 2017.

\bibitem[WRB04]{worthNovelAlgorithmNonadiabatic2004}
G.~A. Worth, M.~A. Robb, and I.~Burghardt.
\newblock A novel algorithm for non-adiabatic direct dynamics using variational
  {{Gaussian}} wavepackets.
\newblock {\em Faraday Discussions}, 127:307, 2004.

\end{thebibliography}

\end{document}